\newtheorem{theorem}{Theorem}[section] 
\newtheorem{remark}{Remark}[section] 
\newtheorem{lemma} {Lemma}[section] 
\newtheorem{proposition}[theorem]{Proposition}
\newtheorem{corollary}[theorem]{Corollary} 
\newtheorem{example}{Example}[section]
\title{Some results on optimal stopping problems for one-dimensional regular diffusions}
\author{Dongchao Huang and Jian Song}
\date{}
\begin{document}
\maketitle

\begin{abstract}
For a type of employee stock option (ESO) and an American put option with a barrier, we obtain closed-form formulae for the value functions and provide a complete characterization for optimal stopping/continuation regions. Some comparison principles for the critical levels and the value functions are  given. This work is inspired by the characterization of the value functions for general one-dimensional regular diffusion processes developed  in \cite{DK03} by Dayanik and Karatzas.
\end{abstract}

\noindent {\em MSC 2010:} Primary 60G40; Secondary 91B25, 60J60  

\vspace{1mm}

\noindent {\em Keywords:} Optimal stopping; Diffusion;  Option; Free boundary; Comparison principle

\section{Introduction}

Let $(\Omega, \mathcal{F},\mathbb{P})$ be a complete probability space, and $B=(B_t)_{t\geq 0}$ be a standard one-dimensional Brownian motion adapted to the filtration $(\mathcal{F}_t)_{t\geq 0}$. In $(\Omega, \mathcal{F},\mathbb{P})$, we consider a price process $X$ with the state space $\mathcal{I}\triangleq(0,+\infty)$ governed by 
\begin{align}\label{eq-X}
dX_t=\mu(X_t)dt+\sigma(X_t) dB_t,~~ X_0=x\in\mathcal I.
\end{align}
{\it Throughout this article, we shall make the following assumption for the diffusion $X$. }

\noindent{\bf Assumption A.} 
\begin{enumerate}
\item[(i)]  $\mu:\mathcal I \to\mathbb R$ and $\sigma: \mathcal I \to (0,+\infty)$ are measurable functions such that  SDE \eqref{eq-X} has a unique strong solution.
\item[(ii)] The function 
\begin{equation}\label{eq-theta}
\theta(x) \triangleq rx-\mu(x)
\end{equation}
 is  non-decreasing in $\mathcal I$. 
\item[(iii)]
The diffusion $X$ is {\it regular}  in $\mathcal{I}$, and $0$ is a {\it natural boundary}.  
\end{enumerate}
The diffusion process given in \eqref{eq-X} under {\bf Assumption A} includes several popupar models of asset prices such as geometric Brownian motion, CEV process with $\beta\ge0$ (\cite{LM10}) and Cox-Ingersoll-Ross(CIR) process (with Feller condition).

In this article, we focus on two optimal stopping problems, which concern an investment with a transaction cost and a minimum guarantee (or an employee stock option) and an American put option with a barrier, respectively.

Consider the value function, 
 \begin{equation}\label{eso1'}
V(x)\triangleq \sup_{\tau \geq 0} E_x[e^{-r\tau}(\max \lbrace l,X_\tau \rbrace -K)^+], ~ x>0,
\end{equation}
where $l$ and $K$ are two positive constants with $l> K$, and $\tau$ is a $\mathcal F_t$-stopping time.   Here we use $E_x$ to denote $E[\cdot|X_0=x]$, and  similarly, we shall use  $P_x$ to denote $P(\cdot|X_0=x)$  in this article.

The  optimal stopping problem (\ref{eso1'}) can be interpreted as an investment problem with a transaction cost $K$ and a minimum guarantee $l$. Suppose that an investor holds a certain stock and he/she wants to profit from selling the stock at the cost of the transaction fee $K$. As a risk-averse investor, he/she believes that if the stock price is currently very low, then it may still remain at a relatively low level for a considerable amount of time. The investor could get away from such a situation safely if his/her stock price has a satisfactory minimum guarantee $l$ (a common hedging strategy to get such a guarantee is to buy put options). 
 Subject to the minimum guarantee $l$ and the transaction cost $K$, the investor faces an investment problem of finding the best selling time in order to maximize  his/her profit, which mathematically is the optimal stopping problem (\ref{eso1'}). 

The value function $V(x)$ in  (\ref{eso1'})  is equivalent to
\begin{equation}\label{eso2}
V(x)= \sup_{\tau \geq 0} E_x[e^{-r\tau}\max \lbrace (X_\tau-K)^+, l-K \rbrace ],
\end{equation}
and this can  be interpreted as the value function of an employee stock option (ESO), the holder of which has an additional choice of cash $l-K$ besides the stock option.  In \cite{XG01}, Guo and Shepp considered ESO pricing problems with the price process $X$ modelled by a geometric Brownian motion.

 The value function of an American put option with a barrier is given as follows. For $x\in (0,d)$,
\begin{equation}\label{vf1}
V(x)\triangleq \sup_{\tau \geq 0} E_x[e^{-r\tau} (q-X_{\tau})^+I_{\lbrace \tau<\tau_d \rbrace }], \ \ \  \mbox{with} \ \ \ \tau_d \triangleq \inf \lbrace t\geq 0: X_t = d \rbrace,
\end{equation}
where  $q $ is the strike price, $d\in (q,+\infty)$ is a pre-set barrier, and $\tau_d$ is the time when the option is ``knocked out". Note that in relation to the risk-neutral pricing, $V(x)$ given in \eqref{vf1} is the premium of an American barrier put option with a dividend yield  $\theta(x)/x$, where $\theta(x)$ is given in \eqref{eq-theta}.


In the Black-Scholes pricing framework, the pricing problem \eqref{vf1} was first considered by Karatzas and  Wang in \cite{KS00}. In their paper, they reduced the optimal stopping problem \eqref{vf1} to a variational inequality, and then obtained closed-form expressions for the value functions by solving the  variational inequality explicitly. In a slightly different direction, still in the Black-Scholes  framework, Dai and Kwok  in \cite{Dai04} presented an analytic valuation formula for  knock-in American options under a trigger clause and showed that the in-out barrier parity relation could be no longer obtained for American barrier options unlike the European counterparty. For more details on American barrier options, we refer to \cite{Gao00,Jun13,Jun15} and the references therein.

 A typical methodology of solving optimal stopping problems is to transform them into free-boundary problems (or variational inequalities). The so-called ``guess-and-verify" technique is used a lot to solve the free-boundary problems (see, e.g., \cite{XG01}, \cite{KS00}, \cite{Ott13}, \cite{Peskir06} and  \cite{Pham09}). More specifically, one first needs to guess the structure of the optimal stopping strategy (stop/continuation region) which is often an artful task. One may then solve the free-boundary problem by imposing conditions (such as smooth-fit/continuous-fit) on the boundaries of continuation and stopping regions. Finally, one needs to validate the function obtained from the previous step as a solution via direct verification (and check the optimality of the stopping strategy).

  However, when the underlying process $X$ is a general diffusion process given in  \eqref{eq-X} rather than a specific process such as a (geometric) Brownian motion, the structure of the optimal stop/continuation region may depend on the functions $\mu(x)$ and $\sigma(x)$, and  the above-mentioned approach would be much more challenging. In contrast to the ``guess-and-verify'' technique, our method  finds directly the mechanism that describes the structure of the stopping/continuation regions for general diffusion processes, provides closed-form formulae for the value functions and complete characterization for optimal stopping strategies (see Theorem \ref{thm3.1}, Remark \ref{fb} and Theorem \ref{value function for barrier put option with American feature}).

 It is of interest in financial models to study the properties of the value functions,  such as the convexity and the monotonicity in the model parameters (see, e.g., \cite{JT03, E04, Henderson05, Hobson10, Yin12}).
  Taking advantage of the formulae, we obtain some properties for the value fuctions, and then investigate the impact of the variations of the internal parameters, such as the volatility $\sigma$ and the drift $\mu$, and the external parameters, such as the interest rate $r$, the minimum guarantee $l$, the transaction cost $K$, the strike price $q$, and  the barrier $d$, on the  optimal strategy and the value function. As a result, some comparison principles are obtained (see Propositions \ref{external comparison principles under ESO problem}, \ref{internal comparison principles under ESO problem},  \ref{external comparison principles for pricing barrier option with American feature}, \ref{internal comparison principles for pricing barrier option with American feature}  and \ref{internal comparison principles for pricing standard American put option}). In the proof of the comparison principles, the properties of the value function play a critical role. We point out that the comparison principle with respect to the drift is not surprising because of the comparison principle for SDEs with respect to the drift, but it is not the case for the comparison principle with respect to the volatility due to the lack of the comparison principle for SDEs with respect to the volatility (see Remark \ref{remark3.6}). 
 
 This article is organized as follows. In Section 2, we provide some preliminaries on the diffusion $X$ given in \eqref{eq-X} and the value function. In Section 3 and Section 4, we deal with the optimal stopping problems \eqref{eso1'} and \eqref{vf1}, respectively.

\section{Some preliminaries on the diffusion and the value function}
 In this section, first we recall some preliminaries on the diffusion process given in \eqref{eq-X} and the value function $V(x)=\sup_{\tau}E_x[e^{-r\tau}h(X_\tau)]$ with some reward function $h$. We also refer to \cite[Section 4.6]{IM74}, \cite[Chapter II]{BS02} and \cite[Section 2]{DK03} for more details.

Let $\tau_\kappa$ be the first passage time of the diffusion process $X$ to level $\kappa$, i.e., 
$\tau_{\kappa}\triangleq\inf \lbrace t\geq 0: X_t=\kappa \rbrace.$ Then $E_x[e^{-r\tau_{\kappa}}]$ admits the following representations
\begin{equation}\label{eq-st}
E_x[e^{-r\tau_{\kappa}}]=
\begin{cases}
\dfrac{\psi (x)}{\psi (\kappa)} &\mbox{if $ x\leq \kappa$},\\

\dfrac{\varphi (x)}{\varphi (\kappa)}& \mbox{if $ x\geq \kappa$},
\end{cases}
\end{equation}
where $\psi$ (resp. $\varphi$) is a strictly increasing (resp. decreasing) solution to the differential equation 
\begin{equation}\label{eq-L}
(\mathcal{L}-r)u(x)=0,~~ x\in \mathcal{I},
\end{equation}
where
$$ \mathcal{L}\triangleq \frac{1}{2}\sigma^2(x)\frac{d^2}{dx^2} +\mu (x) \frac{d}{dx}$$ is the infinitesimal generator of $X$.  Noting that $0$ is a  natural boundary for $X$, by {\bf Assumption A}, we have $$\lim_{x\rightarrow 0} \psi(x)=0,~ \lim_{x\rightarrow 0} \varphi(x)=+\infty.$$ Since $+\infty \notin \mathcal{I}$, we also have  $\lim\limits_{x\rightarrow +\infty} \varphi(x)=0$ (see \cite{BS02} Page 19).  Moreover, $\varphi(x)$ and $\psi(x)$ are convex on $(0,+\infty)$ under {\bf Assumption A} and the following so-called {\it transversality} condition  (cf. \cite[Corollary 1]{AL03}):
 \begin{equation}\label{condition-tran}
  \lim_{t\rightarrow +\infty} E_x[e^{-rt}X_t]=0,~~ \text{ for all } x\in \mathcal I.
 \end{equation}
  Note that the convexity of $\psi$ is assumed in Section 3, and the convexity of $\varphi$ is assumed in Section 4. 

Now consider the value function \begin{align*}
V(x)=\sup_{\tau \geq 0} E_x[e^{-r\tau} h(X_\tau)], \ \ \ x\in \mathcal I
\end{align*}
where $h(x)$ is a reward function which is bounded on every compact subset of $\mathcal I$  and satisfies $\sup_{x\in \mathcal I}h(x)>0$.

 The major instrument that the methodology of this article relies on is Proposition 5.12 in \cite{DK03}, which gives the following representation for the value function, 
\begin{equation}\label{formula-V}
 V(x)=\varphi(x) W(F(x)), ~ x \in \mathcal I,
 \end{equation}
   where $W: [0,+\infty)\rightarrow \mathbb{R}$ is the smallest concave majorant of the function 
\begin{equation}\label{H}
H(y)\triangleq 
\begin{cases}
\left( \dfrac{h}{\varphi} \right) \circ F^{-1}(y), & \mbox{if} \ y >0\\
0, &  \mbox{if} \  y=0,
\end{cases}
\end{equation} with
\begin{equation}\label{F}
F(x)\triangleq \frac{\psi(x)}{\varphi(x)},~~ x\in \mathcal I. 
\end{equation}
Note that $F(x)$ is strictly increasing on $\mathcal I$, and $$\lim_{x\to0^+}F(x)=0,~ \lim_{x\to +\infty}F(x)=+\infty, ~\lim_{y\to0^+}H(y)=0.$$ 

Let $S$ be the scale function of the diffusion $X$, i.e., for arbitrary $c\in \mathcal I$,
\begin{equation}\label{scale}
S(x)=\int_c^x \exp \left(-\int_c^y \frac{2\mu(z)}{\sigma^2(z)}dz \right) dy.
\end{equation}
The generalized {\it Wronskian determinant}  of  $f$ and $g$ is defined as
\begin{equation} \label{W}
W(f,g) (x)\triangleq g(x)\frac{df(x)}{dS(x)}-f(x)\frac{dg(x)}{dS(x)}=\frac{g^2(x)}{S'(x)}\left(\frac{f(x)}{g(x)}\right)'.
\end{equation}
Note that $W(\psi, \varphi)$ is a positive constant. Indeed, the derivative  
\begin{align*}
\frac {dW(\psi,\varphi)(x)}{dx}= \dfrac{\left(\psi''\varphi-\varphi''\psi\right)S'+\left(\psi\varphi'-\varphi\psi'\right)S''}{(S')^2}(x)
\end{align*}
is zero, because  
$\left(\psi''\varphi-\varphi''\psi\right)(x)=\dfrac{2\mu (x)}{\sigma^2(x)}\left(\psi\varphi'-\varphi\psi'\right)$ and 
$S''(x)=-2\mu(x)/\sigma^2(x)\cdot S'(x)$.  The positivity of $W(\psi,\varphi)$ follows from the monotonicity and positivity of $\psi, \varphi$ and $S$.

We finish this section by introducing the following proposition, which provides  formulae for the derivatives of  $H(\cdot)$ given in \eqref{H}.

 \begin{proposition} \label{Prop-H}
Let $ H(\cdot)$ be defined in (\ref{H}) and  assume that $h \in C^2(\mathcal I \setminus \mathcal N$), where $\mathcal N$ is a finite set of points in $\mathcal I$, $h:\mathcal I\to \mathbb R$ is a twice differentiable function. Then, on $\mathcal I \setminus \mathcal N$
\begin{align}\label{H'}
 H'(F(x))=\frac{W(h,\varphi)(x)}{W(\psi,\varphi)},
\end{align}
and 
\begin{align}\label{H''}
   H''(F(x))=\frac{2}{\sigma^2(x)\varphi(x)\left(F'(x)\right)^2}\left[(\mathcal{L}-r)h\right](x),
\end{align}
for all $x\in \mathcal I$, 
where the function $F(\cdot)$ is defined in \eqref{F}, and the Wronskian $W(f,g)$ is defined in \eqref{W}.
\end{proposition}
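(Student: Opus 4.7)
The plan is to start from the defining identity $H(F(x))=h(x)/\varphi(x)$, valid for every $x\in\mathcal I\setminus\mathcal N$ (since $F$ is a bijection of $\mathcal I$ onto $(0,+\infty)$), and then differentiate twice in $x$, at each step rewriting the resulting expressions in the ``Wronskian'' form using \eqref{W} and the fact that $\varphi$ solves $(\mathcal L-r)\varphi=0$.

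For \eqref{H'}, I would apply the chain rule to $H(F(x))=h(x)/\varphi(x)$ to get
\[
H'(F(x))\,F'(x)=\frac{h'(x)\varphi(x)-h(x)\varphi'(x)}{\varphi^2(x)}.
\]
By \eqref{W} the right-hand side equals $W(h,\varphi)(x)\,S'(x)/\varphi^2(x)$, and similarly $F'(x)=W(\psi,\varphi)\,S'(x)/\varphi^2(x)$. Dividing and cancelling $S'(x)/\varphi^2(x)$ yields \eqref{H'}, using that $W(\psi,\varphi)$ is a (nonzero) constant as shown in the preceding paragraph of the excerpt.

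For \eqref{H''}, the cleanest route is to differentiate the identity just obtained, namely $H'(F(x))\,W(\psi,\varphi)=W(h,\varphi)(x)$, with respect to $x$. Writing $W(h,\varphi)(x)=(\varphi h'-h\varphi')(x)/S'(x)$ and using $S''(x)/S'(x)=-2\mu(x)/\sigma^2(x)$, a direct computation gives
\[
\frac{d}{dx}W(h,\varphi)(x)=\frac{1}{S'(x)}\Bigl[\varphi(x)\bigl(\tfrac12\sigma^2 h''+\mu h'\bigr)(x)-h(x)\bigl(\tfrac12\sigma^2\varphi''+\mu\varphi'\bigr)(x)\Bigr]\cdot\frac{2}{\sigma^2(x)},
\]
i.e., $\tfrac{d}{dx}W(h,\varphi)(x)=\tfrac{2}{\sigma^2(x) S'(x)}\bigl[\varphi(x)\mathcal Lh(x)-h(x)\mathcal L\varphi(x)\bigr]$. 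Here the key simplification is to invoke $\mathcal L\varphi=r\varphi$, which collapses the bracket to $\varphi(x)\bigl[(\mathcal L-r)h\bigr](x)$. Consequently
\[
H''(F(x))F'(x)\,W(\psi,\varphi)=\frac{2\varphi(x)}{\sigma^2(x)S'(x)}\bigl[(\mathcal L-r)h\bigr](x),
\]
and substituting $W(\psi,\varphi)S'(x)=\varphi^2(x)F'(x)$ once more to eliminate $W(\psi,\varphi)S'$ on the left yields \eqref{H''}.

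The only place that requires any care is the second step: producing the operator $(\mathcal L-r)h$ from the derivative of a Wronskian. That identification is precisely where the hypothesis $(\mathcal L-r)\varphi=0$ enters and is the main (and only) conceptual point; everything else is bookkeeping with the chain rule and the formula $F'=W(\psi,\varphi)S'/\varphi^2$. The assumption $h\in C^2(\mathcal I\setminus\mathcal N)$ ensures all derivatives above exist off the finite exceptional set $\mathcal N$.
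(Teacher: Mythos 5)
Your proposal is correct and follows essentially the same route as the paper's proof: both differentiate the identity $H(F(x))=h(x)/\varphi(x)$ via the chain rule, pass to Wronskians, and then use $S''/S'=-2\mu/\sigma^2$, the equation $(\mathcal{L}-r)\varphi=0$, and the identity $F'=W(\psi,\varphi)S'/\varphi^2$ to arrive at \eqref{H'} and \eqref{H''}. Your presentation merely reorganizes the bookkeeping (clearing $S'/\varphi^2$ at the outset instead of writing derivatives with respect to $S$), with no substantive difference.
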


\begin{proof}
 Denote $y=F(x)$, then $H(y)=h(x)/\varphi(x).$ By the chain rule, we have on  $y \in F(\mathcal I \setminus \mathcal N)$,
\begin{align*}
H'(y)=\left(\frac{h}{\varphi}\right)'(x)\bigg/F'(x)=\dfrac{d(h/\varphi)}{dF}(x),
\end{align*}
and similarly, 
\begin{align*}
H''(y)=\frac{dH'(y)}{dx}\bigg/F'(x)=\frac{dH'(y)}{dF}(x).
\end{align*}
Noting that $F=\psi/\varphi$, and by the definitions of $W(\psi, \varphi)$ and $W(h,\varphi)$ given in \eqref{W} and \eqref{G}, respectively,
\begin{align*}
&H'(y)=\dfrac{d(h/\varphi)}{dF}(x)=\dfrac{\left(d/{dS}\right)\left(h/\varphi\right)}{dF/dS}(x)\\
&=\dfrac{\left(dh/{dS}\right) \cdot \varphi-h \cdot \left(d\varphi/{dS}\right)}{\left(d\psi/{dS}\right) \cdot \varphi-\psi \cdot \left(d\varphi/{dS}\right)}(x)=\dfrac{W(h,\varphi)(x)}{W(\psi,\varphi)}\, . 
\end{align*}
Consequently, 
\begin{align*}
&H''(y)=\dfrac{d}{dF}\left( \dfrac{W(h,\varphi)}{W(\psi,\varphi)}\right)=\dfrac{1}{W(\psi,\varphi)} \cdot\dfrac{dW(h,\varphi)}{dF}\\
&=\dfrac{1}{W(\psi,\varphi)} \cdot \dfrac{d\left(\varphi(x) \frac{dh}{dS}(x) - h(x) \frac{d\varphi}{dS}(x)\right) }{dF}=\dfrac{1}{W(\psi,\varphi)} \cdot \dfrac{\left(\varphi\cdot \frac{h'}{S'}-h \cdot \frac{\varphi'}{S'}\right)'}{F'}\\
&=\dfrac{1}{W(\psi,\varphi)} \cdot
\dfrac{\varphi \cdot (h''S'-h'S'')-h \cdot (\varphi '' S'-\varphi ' S'')}{F'\cdot (S')^2}\\
&=\dfrac{1}{W(\psi,\varphi)\cdot F' \cdot S'} \cdot \left[(\varphi h''-h\varphi '')+(h\varphi '-h' \varphi) \frac{S''}{S'} \right]\\
&\stackrel{(a)}{=} \dfrac{1}{W(\psi,\varphi) \cdot F' \cdot S'} \cdot \left[(\varphi h''-h\varphi '')-\frac{2\mu(x)}{\sigma^2(x)}(h\varphi '-h' \varphi) \right]\\
&=  \dfrac{2}{W(\psi,\varphi) \cdot F' \cdot S' \cdot \sigma^2(x)}\left[\varphi \cdot \left(\frac{1}{2}\sigma^2(x)h''+\mu(x)h'\right)-h \cdot \left(\frac{1}{2}\sigma^2(x)\varphi''+\mu(x)\varphi'\right)\right]\\
&\stackrel{(b)}{=}  \dfrac{2\varphi}{W(\psi,\varphi) \cdot F' \cdot S' \cdot \sigma^2(x)} [(\mathcal{L}-r)h](x)\\
&\stackrel{(c)}{=} \dfrac{2}{\varphi(x) \cdot \sigma^2(x) \cdot \left(F'(x)\right)^2} [(\mathcal{L}-r)h](x),
\end{align*}
where the equality (a) follows from $S''(x)/S'(x)=-2\mu(x)/\sigma^2(x)$, (b) follows from the fact that $\varphi$ is a solution to $\mathcal{L}u=ru$, and (c) holds because of the equality $$\frac{W(\psi,\varphi) \cdot S'}{\varphi^2}=\left(\frac{\psi}{\varphi}\right)'=F'.$$
\end{proof}

\begin{remark}
In light of \eqref{H''},  the concavity/convexity of the function $H(\cdot)$ on $\mathcal I$ only depends on the sign of $(\mathcal{L}-r)h(x)$, noting that $\varphi(x)$, $\sigma^2(x)$, and $(F'(x))^2$ are all positive. 
\end{remark}

\begin{remark}
Obviously a linear combination of $\psi$ and $\varphi$ is still a solution to \eqref{eq-L}, but the inverse is not straightforward. Interestingly,  using Proposition \ref{Prop-H} we can provide an alternative proof as follows. 
\end{remark}

\begin{corollary}
Any solution to \eqref{eq-L} can be represented as a linear combination of $\psi(\cdot)$ and $\varphi(\cdot)$.
\end{corollary}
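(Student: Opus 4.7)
The plan is to apply Proposition \ref{Prop-H} with the candidate reward function $h$ replaced by an arbitrary solution $u$ to \eqref{eq-L}. The key insight is that the right-hand side of formula \eqref{H''} involves exactly $(\mathcal{L}-r)u$ evaluated at $x$, so if $u$ solves the homogeneous equation then the associated auxiliary function $H(y) \triangleq (u/\varphi)\circ F^{-1}(y)$ must have identically vanishing second derivative. Once this is established, linearity of $H$ in the transformed variable $y=F(x)$ translates back to the $\psi,\varphi$-decomposition of $u$ in the original variable $x$.

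First I would verify that the hypotheses of Proposition \ref{Prop-H} are met for $h=u$: since $\sigma^2(x)>0$ throughout $\mathcal I$, any classical solution of the linear second-order ODE $(\mathcal L - r)u=0$ is automatically $C^2(\mathcal I)$, so the exceptional set $\mathcal N$ may be taken to be empty. Next I would plug $u$ into formula \eqref{H''} to obtain $H''(F(x))=0$ for every $x\in\mathcal I$. Because $F:\mathcal I \to (0,+\infty)$ is a strictly increasing bijection (as recalled immediately after \eqref{F}), this forces $H''(y)\equiv 0$ on $(0,+\infty)$, and hence $H(y)=ay+b$ for some constants $a,b\in\mathbb R$.

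Finally, I would translate back through $y=F(x)=\psi(x)/\varphi(x)$: the identity $u(x)/\varphi(x)=H(F(x))=a\,\psi(x)/\varphi(x)+b$ multiplied by $\varphi(x)$ yields the desired representation $u(x)=a\psi(x)+b\varphi(x)$. I do not anticipate any serious obstacle, as the argument is essentially a one-line consequence of the formula \eqref{H''}; the only point worth a brief remark is the $C^2$ regularity of $u$, which is a standard consequence of linear ODE theory under the nondegeneracy of $\sigma$.
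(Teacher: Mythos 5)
Your proof is correct and follows essentially the same route as the paper: apply Proposition \ref{Prop-H} with $h=u$ to conclude $H''\equiv 0$ via \eqref{H''}, deduce that $H$ is affine, and translate back through $y=F(x)=\psi(x)/\varphi(x)$ to get $u=a\psi+b\varphi$. Your added remarks on the $C^2$ regularity of solutions and the bijectivity of $F$ are minor refinements the paper leaves implicit, but they do not change the argument.
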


\begin{proof}
Suppose $h$ is a solution to \eqref{eq-L},  i.e., $ [(\mathcal{L}-r)h](x)=0$ for $x\in \mathcal I$. It follows that $H''(y) \equiv 0$ for $y\in \mathcal I$. And thus $H(y)=C_1y+C_2$ for some constants $C_1$ and $C_2$. Note that $H(y)=\left( \frac{h}{\varphi} \right) \circ F^{-1}(y)$. Consequently, $\left( \frac{h}{\varphi} \right) \circ F^{-1}(y)=C_1y+C_2$. Therefore, 
\begin{align*}
\dfrac{h(x)}{\varphi(x)}=C_1 F(x)+C_2=C_1 \dfrac{\psi(x)}{\varphi(x)}+C_2=\dfrac{C_1 \psi(x)+C_2 \varphi(x)}{\varphi(x)}, 
\end{align*}
and hence
$h(x)=C_1 \psi(x) +C_2 \varphi(x).$
\end{proof}


\section{On an employee stock option (ESO)}
In this section, we consider the  optimal stopping problem \eqref{eso1'}. A typical example  in corporate finance is the following employee stock option (ESO) problem:
\begin{equation}\label{avf1}
V(x,l,K)=\sup_{\tau \geq 0} E_x[e^{-r\tau}(Y_\tau -K)^+],
\end{equation}
where $K$ is the strike price,  $Y_t=\max \lbrace l,X_t \rbrace$, $l$ is a constant (slightly) bigger than $K$, and $X_t$ is  defined in \eqref{eq-X}. This problem is equivalent to
\begin{equation}\label{avf2}
V(x)=\sup_{\tau \geq 0} E_x[e^{-r\tau}(({X_\tau}-l)^+ +s)] \triangleq  \sup_{\tau \geq 0} E_x[e^{-r\tau} g(X_\tau)]
\end{equation}
where $s=l-K>0$ and the reward function is given as 
\begin{equation}\label{reward'}
g(x) \triangleq s+(x-l)^+.
\end{equation}
 Throughout this section, besides {\bf Assumption A}, we also assume that $\psi(x)$ is convex on $(0,+\infty)$. Note that a sufficient condition for the convexity of $\psi(x)$ (and $\varphi(x)$)  is the tranversality condition  \eqref{condition-tran}.

\begin{lemma} \label{lemma-3-1} For $g(x) =s+(x-l)^+$,
\begin{equation*}
L_0\triangleq \lim_{x\rightarrow 0^+} \dfrac{g(x)}{\varphi(x)}=0, \ \ \ \L_{+\infty}  \triangleq \lim_{x\rightarrow +\infty} \dfrac{g(x)}{\psi(x)} = \lim_{x\rightarrow +\infty} \dfrac{g'(x)}{\psi'(x)}<+\infty.
\end{equation*}
\end{lemma}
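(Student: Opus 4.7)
The plan is to handle the two limits separately, exploiting the explicit form of $g$ together with the boundary behavior of $\varphi$ and the convexity/monotonicity of $\psi$.

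For $L_0$, I would observe that the reward function is eventually constant near $0$: since $l>0$, for all $x\in(0,l)$ we have $(x-l)^+=0$ and hence $g(x)=s$. Thus $g(x)/\varphi(x)=s/\varphi(x)$ for small $x$. The preliminaries already state that $\lim_{x\to 0^+}\varphi(x)=+\infty$ because $0$ is a natural boundary under \textbf{Assumption A}, so the ratio tends to $0$. This part is essentially bookkeeping.

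For $L_{+\infty}$, the reward is eventually affine: for $x>l$ we have $g(x)=s+(x-l)=x-K$ (recall $s=l-K$), and so $g'(x)\equiv 1$ for $x>l$. The key structural fact is that $\psi$ is assumed convex on $(0,+\infty)$ in Section~3 and is strictly increasing, so $\psi'$ is nondecreasing and bounded below by $\psi'(l)>0$. Consequently, $c\triangleq\lim_{x\to+\infty}\psi'(x)\in(0,+\infty]$ exists, and a linear lower bound of the form $\psi(x)\ge \psi(l)+\psi'(l)(x-l)$ shows that $\psi(x)\to+\infty$. Hence both $g(x)\to\infty$ and $\psi(x)\to\infty$, putting us in the $\infty/\infty$ regime of L'Hôpital's rule. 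Applying it yields
\begin{equation*}
\lim_{x\to+\infty}\frac{g(x)}{\psi(x)}=\lim_{x\to+\infty}\frac{g'(x)}{\psi'(x)}=\frac{1}{c}\in[0,+\infty),
\end{equation*}
which is exactly the equality claimed and is finite in both cases (equal to $0$ when $c=+\infty$, and to $1/c$ otherwise).

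There is not really a hard step here: the main subtlety is to make sure the L'Hôpital-style equality is rigorously justified in both the $c<\infty$ and $c=\infty$ subcases, and to remember that the convexity of $\psi$ assumed at the start of Section~3 is exactly what supplies the existence of the limit of $\psi'$ and the $\infty/\infty$ indeterminate form. Everything else, including $\varphi(0^+)=+\infty$, has been recorded in the preliminaries.
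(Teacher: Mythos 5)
Your proof is correct and follows essentially the same route as the paper: the first limit from $g\equiv s$ near $0$ together with $\varphi(0^+)=+\infty$, and the second from L'H\^opital's rule with finiteness supplied by the strict monotonicity and convexity of $\psi$. Your version merely fills in the details the paper leaves implicit (the $\infty/\infty$ indeterminate form and the existence of $\lim_{x\to+\infty}\psi'(x)$ via monotonicity of $\psi'$), which is a sound elaboration rather than a different argument.
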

\begin{proof}
The first equation follows from the continuity of $g$ and the fact  $\lim\limits_{x\to 0}\varphi(x)=+\infty$. The second equation holds because of the L'Hospital's rule, and the limit is finite because $\psi$ is strictly increasing and convex.
\end{proof}

Thanks to Lemma \ref{lemma-3-1}, we may apply \cite[Proposition 5.12]{DK03} and get that $$V(x)=\varphi(x) \tilde{W} (F(x)),$$ where $ \tilde{W} :[0,+\infty) \rightarrow \mathbb{R}$ is the smallest non-negative concave majorant of 
\begin{equation}\label{G}
G(y)\triangleq 
\begin{cases}
\left( \dfrac{g}{\varphi} \right) \circ F^{-1}(y), & \mbox{if} \ y >0\\
0, &  \mbox{if} \  y=0.
\end{cases}
\end{equation}
This fact is the key ingredient in the proof of  Theorem \ref{thm3.1}.


To present easier arguments in the proofs, throughout this section, we just consider the reward function $g$ given in \eqref{reward'}. However,  all the results in this section can be extended to value functions with a general reward function $g$ that satisfies the following conditions.

\begin{enumerate}
\item
$g$ is strictly positive, non-decreasing, continuous on $[0,+\infty)$, and twice differentiable except on some positive point $n_g$ with $g'(n_g^+)>g'(n_g^-)$. 
\item $\lim\limits_{x\to+\infty}g(x)=+\infty.$
\item
$(\mathcal{L}-r)g(x)<0$ for all $x\in (0,n_g)$ and $(\mathcal{L}-r)g(x)$ is non-increasing on $(n_g,+\infty)$.
\item
$L_{+\infty} \triangleq \lim\limits_{x\rightarrow +\infty} \dfrac{g(x)}{\psi(x)}<+\infty$. (Assuming this condition directly, we do not need the assumption that $\psi$ is convex.)
\end{enumerate}

The properties of the function $G(\cdot)$ in \eqref{G} provided by the following lemma will play a key role in finding the formula for the value function. 
\begin{lemma} \label{lemma-G}
The function $G(\cdot)$ in \eqref{G} is continuous on $[0,+\infty)$,  twice differentiable on $(0,+\infty) \setminus \{ F(l)\}$,  and possesses the following properties:
\begin{enumerate}
\item[(i)] 
$G(y)$ is strictly increasing on $[0,+\infty)$.

\item[(ii)]
$G(y)$ is strictly concave on $(0,F(l))$. Moreover, there exists a unique point $x_g \in [l,+\infty]$ such that $G(y)$ is convex on $(F(l),F(x_g))$ while it is strictly concave on $(F(x_g),+\infty)$.  Here, we use the following convention: if $x_g=l$, then the interval $(F(l),F(x_g))$ reads as the empty set $\emptyset$ and $(F(x_g),+\infty)$ reads as $(F(l),+\infty)$;  if $x_g=+\infty$, then the interval $(F(l),F(x_g))$ reads as $(F(l),+\infty)$ and $(F(x_g),+\infty)$ reads as $\emptyset$.

\item[(iii)]
\begin{enumerate}
\item[a)]
$\lim\limits_{y\rightarrow F(l)^+} G'(y)> \lim\limits_{y\rightarrow F(l)^-} G'(y)$; 
\item[b)]
$\lim\limits_{y \rightarrow 0}\frac{1}{G'(y)}=0$.
\item[c)]
$\lim\limits_{y \rightarrow +\infty} G'(y)$ exists and  $\lim\limits_{y \rightarrow +\infty} G'(y)=L_{+\infty}$.
\end{enumerate} 

\end{enumerate} 
\end{lemma}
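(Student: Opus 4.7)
The plan is to reduce each assertion of Lemma \ref{lemma-G} to two structural identities supplied by Proposition \ref{Prop-H}: at $y=F(x)$, $G'(y)=W(g,\varphi)(x)/W(\psi,\varphi)$ and $\mathrm{sgn}\,G''(y)=\mathrm{sgn}\,(\mathcal L-r)g(x)$, together with the elementary observation $G(y)/y=g(x)/\psi(x)$, which is immediate from $y=\psi(x)/\varphi(x)$ and $G(y)=g(x)/\varphi(x)$. Continuity of $G$ on $(0,+\infty)$ and twice-differentiability on $(0,+\infty)\setminus\{F(l)\}$ are inherited from the corresponding regularity of $g,\varphi,F$ and the fact that $g\in C^2(\mathcal I\setminus\{l\})$; continuity at $0$ follows from $g(0^+)=s$ and $\varphi(0^+)=+\infty$, giving $\lim_{y\to 0^+}G(y)=0=G(0)$.

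For (i), I use the explicit form $W(g,\varphi)(x)=(\varphi(x)g'(x)-g(x)\varphi'(x))/S'(x)$, which is strictly positive on $\mathcal I\setminus\{l\}$ because $\varphi>0$, $g>0$, $g'\ge 0$, $\varphi'<0$ and $S'>0$. Hence $G'(y)>0$ for every $y\ne F(l)$, and continuity of $G$ upgrades this to strict monotonicity on $[0,+\infty)$. For the jump in (iii)(a), substituting $g'(l^-)=0$ and $g'(l^+)=1$ into the Wronskian formula gives $W(g,\varphi)(l^-)=-s\varphi'(l)/S'(l)$ and $W(g,\varphi)(l^+)=(\varphi(l)-s\varphi'(l))/S'(l)$, so the jump in $G'$ at $F(l)$ equals $\varphi(l)/(S'(l)W(\psi,\varphi))$, which is strictly positive.

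For (ii), direct calculation yields $(\mathcal L-r)g(x)=-rs<0$ on $(0,l)$ (where $g\equiv s$) and $(\mathcal L-r)g(x)=\mu(x)-r(s+x-l)=rK-\theta(x)$ on $(l,+\infty)$. Assumption A(ii) then makes $(\mathcal L-r)g$ non-increasing on $(l,+\infty)$, so the set $\{x>l:(\mathcal L-r)g(x)\ge 0\}$ is an interval whose right endpoint defines a unique $x_g\in[l,+\infty]$ (with the convention $x_g=l$ when the set is empty and $x_g=+\infty$ when it is unbounded). Feeding the sign of $(\mathcal L-r)g$ into the formula for $G''$ immediately gives strict concavity on $(0,F(l))$, convexity on $(F(l),F(x_g))$, and strict concavity on $(F(x_g),+\infty)$.

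Parts (iii)(b) and (iii)(c) both exploit $G(y)/y=g(x)/\psi(x)$. For (iii)(b), as $y\to 0^+$ the ratio tends to $s/0^+=+\infty$; since $G$ is concave on $(0,F(l))$ with $G(0)=0$, the average slope $G(y)/y=(1/y)\int_0^y G'(t)\,dt$ is non-increasing in $y$, and its limit as $y\to 0^+$ must equal $G'(0^+)$, forcing $G'(0^+)=+\infty$ and hence $1/G'(y)\to 0$. For (iii)(c), the same identity combined with Lemma \ref{lemma-3-1} yields $G(y)/y\to L_{+\infty}$ as $y\to+\infty$; since $G$ is concave on the tail $(F(x_g),+\infty)$, $G'$ is non-increasing and non-negative there, hence admits a finite limit $L^*$, and writing $G(y)/y=(G(F(x_g))+\int_{F(x_g)}^y G'(t)\,dt)/y$ and applying a Cesàro-type argument pins $L^*=L_{+\infty}$. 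The main technical care I expect is in the case analysis for (ii) when $(\mathcal L-r)g$ vanishes on an interval (so that $G$ is locally affine, neither strictly convex nor strictly concave); defining $x_g$ as the right endpoint of the zero set is what makes both halves of (ii) simultaneously correct under the stated conventions.
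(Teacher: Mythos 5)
Your handling of the continuity statements, part (i), part (ii), and part (iii)(a) matches the paper's proof essentially step for step: the same positivity of $W(g,\varphi)$ via $g>0$, $g'\ge 0$, $\varphi'<0$, $S'>0$ for (i); the same computation $(\mathcal{L}-r)g=-rs$ on $(0,l)$ and $(\mathcal{L}-r)g=rK-\theta(x)$ on $(l,+\infty)$, with $x_g$ the right endpoint of $\{x>l:(\mathcal{L}-r)g(x)\ge 0\}$, for (ii); and the same one-sided Wronskian limits $-s\varphi'(l)$ and $\varphi(l)-s\varphi'(l)$ producing the jump $\varphi(l)/\bigl(S'(l)W(\psi,\varphi)\bigr)>0$ for (iii)(a). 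Where you genuinely diverge is in (iii)(b) and (iii)(c): the identity $G(y)/y=g(x)/\psi(x)$ together with chord-slope monotonicity (for (b)) and a Ces\`aro/L'H\^opital step (for (c)) replaces the paper's more roundabout arguments, which first establish the existence of $\lim_{y\to 0}1/G'(y)$ from concavity, respectively bound $\limsup_{y\to\infty}G'(y)\le 2L_{+\infty}$ by splitting the Wronskian quotient, and only then apply L'H\^opital to $\psi/g$, respectively to $(g/\varphi)/F$. Your route is shorter and avoids the crude $2L_{+\infty}$ bound; the paper's route avoids invoking the chord-to-derivative limit facts for concave functions, but the two are equally rigorous where yours is stated correctly.

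There is one concrete slip, in (iii)(c): you assert that $G$ is concave on the tail $(F(x_g),+\infty)$, so that $G'$ is non-increasing there. This premise fails exactly when $x_g=+\infty$, in which case that tail is empty by the lemma's own convention and $G$ is \emph{convex} on $(F(l),+\infty)$; this case genuinely occurs (in the paper's Example, geometric Brownian motion with $\mu=r$ gives $(\mathcal{L}-r)g=rK>0$ on $(l,+\infty)$, hence $x_g=+\infty$). Your argument is repairable within your own framework, and the repair is one line: in either case $G'$ is eventually monotone (non-increasing on $(F(x_g),+\infty)$ when $x_g<+\infty$, non-decreasing on $(F(l),+\infty)$ when $x_g=+\infty$) and non-negative by (i), so $\lim_{y\to+\infty}G'(y)=L^*$ exists in $[0,+\infty]$, and the Ces\`aro/L'H\^opital step together with $\lim_{y\to+\infty}G(y)/y=\lim_{x\to+\infty}g(x)/\psi(x)=L_{+\infty}<+\infty$ from Lemma \ref{lemma-3-1} forces $L^*=L_{+\infty}$. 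Note that the paper's proof is careful on precisely this point: it only claims from (ii) that $G'$ is monotone on some $(M,+\infty)$, without specifying the direction, and separately rules out $L^*=+\infty$. State your tail step in that direction-free form and the proposal is complete.
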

\begin{proof}
Clearly $G(\cdot)$ is continuous on $(0,+\infty)$ and twice differentiable on $(0,+\infty)\backslash\{F(l)\}$. It is also continuous at $0$, noting that $\lim\limits_{y\to0+} G(y)=\lim\limits_{x\to0+}\frac{g(x)}{\varphi(x)}=0.$  Now we prove the properties (i)-(iii) of $G$.

(i) By Proposition \ref{Prop-H}, $G'(F(x))=\frac{W(g,\varphi)}{W(\psi, \varphi)}$. The result follows from the fact that $W(\psi, \varphi)$ is a positive constant, and  $W(g,\varphi) \triangleq \varphi(x) \frac{dg}{dS}(x) - g(x) \frac{d\varphi}{dS}(x)>0$, since $g$ is a non-decreasing positive function, $\varphi$ is strictly decreasing, and $S$ is strictly increasing.

(ii) Note that 
\begin{align*}
(\mathcal{L}-r)g(x)&=\frac{1}{2}\sigma^2(x) g''(x)+\mu(x) g'(x)-rg(x)=
\begin{cases}
-rs, & \mbox{if} \ x<l;\\
rK-\theta(x), & \mbox{if} \ x>l.
\end{cases}
\end{align*}
Hence $G(\cdot)$ is strictly concave on $(0, F(l))$ by Proposition \ref{Prop-H}.

Note  that $(\mathcal{L}-r)g(x)$ is a non-increasing function on $(l,+\infty)$, and define
$$x_g \triangleq \sup \Big\{ a>l: (\mathcal{L}-r)g(x) \geq 0  \text{ on } (l,a) \text{ and } (\mathcal{L}-r)g(x)\leq 0 \text{ on  }(a,+\infty)\Big\},$$ where  we use the convention that $\sup \emptyset = l$. It follows immediately from Proposition \ref{Prop-H} that  if $x_g \in (l,+\infty)$, $G$ is convex on $(F(l),F(x_g))$ and strictly concave on $(F(x_g),+\infty)$, and if $x_g=+\infty$, $G$ is convex on $(F(l),+\infty)$. Finally, note that $x_g=l$ only if $(\mathcal L-r) g(l)\le 0$ and $(\mathcal L-r) g(l+\varepsilon)<0$ for all $\varepsilon>0$. Hence if $x_g=l$, $G$ is strictly concave on $(F(l),+\infty)$.

(iii) Note that the left derivative $g'(l^-)=0$ and the right derivative $g'(l^+)=1$. Then, straight forward calculation shows that $$\lim_{x \rightarrow l^-} S'(x) W(h,\varphi)=\lim_{x \rightarrow l^-} \varphi(x) g'(x) -g(x) \varphi'(x)=-s\varphi'(l)$$ and $$\lim_{x \rightarrow l^+} S'(x) W(h,\varphi)=\lim_{x \rightarrow l^+} \varphi(x) g'(x) -g(x) \varphi'(x)=\varphi(l)-s\cdot\varphi'(l)>-s\varphi'(l).$$ 
Therefore, $\lim_{y\rightarrow F(l)^+} G'(y)> \lim_{y\rightarrow F(l)^-} G'(y)$. \\
For part b), note that $\frac{1}{G'(y)}$ is positive, continuous, and furthermore, increasing  on $(0,F(l))$ as $G(y)$ is strictly concave on $(0, F(l))$. As a consequence, $\lim\limits_{y \rightarrow 0} \frac{1}{G'(y)}$  exists and satisfies
\begin{equation*}
\lim_{y \rightarrow 0} \dfrac{1}{G'(y)}=\lim_{y \rightarrow 0} \dfrac{F'}{\left({g}/{\varphi}\right)'}\circ F^{-1}(y)= \lim_{x \rightarrow 0} \dfrac{F'(x)}{\left({g}/{\varphi}\right)'(x)}=a \geq 0.
\end{equation*}
Note that $\lim\limits_{x \rightarrow 0} F(x)=0$, $\lim\limits_{x \rightarrow 0}\frac{g(x)}{\varphi(x)}=\lim_{x \rightarrow 0} \frac{s}{\varphi(x)}=0$, and $\lim\limits_{x \rightarrow 0} \frac{\psi(x)}{g(x)}=\lim\limits_{x \rightarrow 0} \frac{\psi(x)}{s}=0$. Since $\lim\limits_{y \rightarrow 0} \frac{1}{G'(y)}$ exists, we can apply L' H\^opital's rule to get
\begin{equation*}
0=\lim_{x \rightarrow 0} \dfrac{\psi(x)}{g(x)}=\lim_{x \rightarrow 0}\dfrac{F(x)}{\left({g}/{\varphi}\right)(x)}=\lim_{x \rightarrow 0}\dfrac{F'(x)}{\left({g}/{\varphi}\right)'(x)}=a,
\end{equation*}
which together with the previous equation leads to $$\lim_{y \rightarrow 0}\frac{1}{G'(y)}=0.$$

Finally, we show $$\lim_{y \rightarrow +\infty} G'(y)=L_{+\infty}.$$ 
In view of $(ii)$, there exists $M>0$ such that on $(M,+\infty)$, $G'$ is Monotone. Noting that $G'(y)\ge0$ for $y\ge0,$  to obtain the existence of $\lim\limits_{y \rightarrow +\infty} G'(y)$, it suffices to show $$\limsup_{y \rightarrow +\infty} G'(y)<\infty.$$  Indeed,
\begin{align*}
\limsup_{y \rightarrow +\infty} G'(y) \ &= \ \limsup_{x\rightarrow +\infty} \dfrac{\left(g/ \varphi \right)'}{F'}(x) \ = \ \limsup_{x\rightarrow +\infty} \dfrac{g'\varphi-g\varphi'}{\psi'\varphi-\psi\varphi'}(x)\\
& \leq \ \limsup_{x\rightarrow +\infty} \dfrac{g'\varphi}{\psi'\varphi-\psi\varphi'}(x)+\limsup_{x\rightarrow +\infty} \dfrac{-g\varphi'}{\psi'\varphi-\psi\varphi'}(x)\\
& \leq \ \limsup_{x\rightarrow +\infty} \dfrac{g'(x)}{\psi'(x)} + \limsup_{x\rightarrow +\infty} \dfrac{g(x)}{\psi(x)}\\
& = \ 2\cdot L_{+\infty}<+\infty.
\end{align*}
As a consequence, $\lim\limits_{y \rightarrow +\infty} G'(y)$ exists and
\begin{equation*}
\lim_{y\rightarrow +\infty}G'(y)= \lim_{y\rightarrow +\infty} \dfrac{\left({g}/{\varphi}\right)'}{F'}\circ F^{-1}(y)= \lim_{x \rightarrow +\infty} \dfrac{\left({g}/{\varphi}\right)'(x)}{F'(x)}.
\end{equation*}
Noting that $\lim\limits_{x \rightarrow +\infty} \dfrac{g(x)}{\varphi(x)}=+\infty$, $\lim\limits_{x \rightarrow +\infty} F(x)=+\infty$ and $\lim\limits_{x \rightarrow +\infty} \dfrac{g(x)}{\psi(x)}=L_{+\infty}$, by L' H\^opital's rule,
\begin{equation*}
L_{+\infty}=\lim_{x \rightarrow +\infty} \dfrac{g(x)}{\psi(x)}=\lim_{x \rightarrow +\infty}\dfrac{\left({g}/{\varphi}\right)(x)}{F(x)}=\lim_{x \rightarrow +\infty}\dfrac{\left({g}/{\varphi}\right)'(x)}{F'(x)}.
\end{equation*}
This with the previous equation concludes that  $$\lim_{y \rightarrow +\infty} G'(y)=L_{+\infty}.$$
\end{proof}


\begin{lemma}\label{lemma-T=X}
Let $T$ be the space of points on tangent lines to $G$ on  $(0,F(l)]$, i.e.,
 $$ T\triangleq \Big\{(y,v)\in \mathbb R^2: y>0, ~ v=L_z(y)=G'(z)(y-z)+G(z) \mbox{ for some } z\in (0,F(l)]\Big\},$$
 where we use the convention that $G'(F(l))$ means $G'(F(l)^-)$.  Then $T=X$, where
 $$X \triangleq \Big\{ (y,v)\in \mathbb{R}^2:~ y>0,~ v\geq \tilde{G}(y)\Big\}$$ with $\tilde{G}$ defined as
\begin{equation*}
\tilde{G}(y) \triangleq
\begin{cases}
G(y), & \mbox{if} \ 0< y < F(l) \\
G'(F(l)^-)(y-F(l))+ G(F(l)), & \mbox{if} \ y \geq F(l).
\end{cases}
\end{equation*} 
\end{lemma}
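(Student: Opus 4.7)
The plan is to establish $T = X$ by showing that, for each fixed $y > 0$, the range of the map $\Phi_y : z \mapsto L_z(y) = G(z) + G'(z)(y - z)$ as $z$ varies over $(0, F(l)]$ is exactly $[\tilde{G}(y), +\infty)$. The crucial computation is
\[
\frac{d}{dz}\Phi_y(z) = G'(z) + G''(z)(y - z) - G'(z) = G''(z)(y - z),
\]
which, combined with the strict concavity of $G$ on $(0, F(l))$ from Lemma \ref{lemma-G}(ii), shows that $\Phi_y$ is strictly decreasing in $z$ whenever $z < y$ and strictly increasing in $z$ whenever $z > y$ (on the portion of $(0, F(l))$ where this makes sense).

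Next I would pin down the endpoint behaviour. Lemma \ref{lemma-G}(iii)(b) yields $G'(z) \to +\infty$ as $z \to 0^+$, and continuity of $G$ at $0$ gives $G(z) \to 0$; hence $\lim_{z \to 0^+} \Phi_y(z) = +\infty$ for every $y > 0$. At the right endpoint, using the convention $G'(F(l)) := G'(F(l)^-)$, one has $\Phi_y(F(l)) = G(F(l)) + G'(F(l)^-)(y - F(l))$, which agrees with $\tilde{G}(y)$ exactly when $y \geq F(l)$. With these tools I would split into two cases. If $0 < y \leq F(l)$, the derivative analysis places the minimum of $\Phi_y$ on $(0, F(l)]$ at $z = y$, with value $\Phi_y(y) = G(y) = \tilde{G}(y)$. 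If $y > F(l)$, then $z < y$ throughout $(0, F(l)]$, so $\Phi_y$ is strictly decreasing in $z$, and its minimum on the interval is attained at $z = F(l)$ with value $\tilde{G}(y)$.

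Combining the identified minimum $\tilde{G}(y)$ with the blow-up as $z \to 0^+$, and using continuity of $\Phi_y$ on $(0, F(l)]$ (which follows from the $C^2$ regularity of $G$ on $(0, F(l))$ together with the left-continuity convention at $F(l)$), the intermediate value theorem gives
\[
\Phi_y\bigl((0, F(l)]\bigr) = [\tilde{G}(y), +\infty).
\]
Therefore $(y, v) \in T$ iff $v \geq \tilde{G}(y)$ iff $(y, v) \in X$, which is the desired equality. The only subtle point I anticipate is the jump of $G'$ at $F(l)$ recorded in Lemma \ref{lemma-G}(iii)(a); this is handled cleanly by the built-in convention that the tangent line at $F(l)$ uses the left derivative, which makes $\Phi_y$ left-continuous at $F(l)$ and forces $L_{F(l)}(y)$ to be precisely the affine continuation used in the definition of $\tilde{G}$.
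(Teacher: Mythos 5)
Your proof is correct, and while it rests on the same underlying mechanism as the paper's --- fix $y$, sweep the tangency point $z$ over $(0,F(l)]$, exploit $G'(0^+)=+\infty$ (from Lemma \ref{lemma-G}(iii)(b)) for blow-up at the left end, and conclude by the intermediate value theorem --- your organization is genuinely different. The paper treats the two inclusions separately: $T\subset X$ is dispatched in one line by concavity, and $X\subset T$ by checking the sign of $F_{y,v}(z)=v-L_z(y)$ at just two points, namely $z\to 0^+$ and $z=y$ (if $y<F(l)$) or $z=F(l)^-$ (if $y\geq F(l)$). You instead compute the full range of $\Phi_y(z)=L_z(y)$ via the envelope-type identity $\Phi_y'(z)=G''(z)(y-z)$ combined with the strict inequality $G''<0$ on $(0,F(l))$ --- which the paper's \eqref{H''} indeed delivers in strict form, since $(\mathcal{L}-r)g=-rs<0$ there --- locating the exact minimum at $z=y$ (resp.\ $z=F(l)$) with value $\tilde{G}(y)$, so that $\Phi_y\bigl((0,F(l)]\bigr)=[\tilde{G}(y),+\infty)$ yields both inclusions in a single stroke. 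This refinement buys something concrete: for $y>F(l)$, strict monotonicity of $\Phi_y$ on all of $(0,F(l)]$ gives you, for free, the uniqueness of the tangent line through $(y,v)$, which the paper must prove separately in Remark \ref{remark-T=X} by a chord-slope comparison. Two small points are handled correctly but deserve the emphasis you give them: twice differentiability of $G$ suffices (monotonicity of $\Phi_y$ follows from the sign of $\Phi_y'$ via the mean value theorem, so no continuity of $G''$ is needed), and left-continuity of $\Phi_y$ at $F(l)$ holds because $G'$, being decreasing and positive on $(0,F(l))$, has a finite left limit $G'(F(l)^-)$, consistent with the explicit computation in Lemma \ref{lemma-G}(iii)(a).
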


\begin{proof} It is easy to see that $T\subset X$, noting that $G$ is concave on $(0,F(l))$. Now we show $X\subset T$.  Fix $(y,v)\in X$. Define 
$$F_{y,v}(z)=v-G(z)-G'(z)(y-z), z\in (0, F(l)]. $$
First note that $G'(0^+)=+\infty,$ and hence $F_{y,v}(0^+)=-\infty.$

On the other hand, when $0<y<F(l)$, 
$$F_{y,v}(y)=v-G(y)\ge0;$$
when $y\ge F(l)$,
\begin{align*}
F_{y,v}(F(l)^-)&=v-G(F(l))-G'(F(l)^-)(y-F(l))\\
&= v-\tilde G(y)\ge 0.
\end{align*}
Therefore, by the intermediate value theorem, there exists $\omega\in (0, F(l)]$ such that $F_{y,v}(\omega)=0,$ i.e.,
\begin{equation}\label{eq-T-X}
v=G'(\omega)(y-\omega)+G(\omega),
\end{equation}
 and hence  $(y,v)\in T$. The proof is concluded. 
\end{proof}

\begin{remark}\label{remark-T=X} We point out that the tangent line that passes through a point $(y,v)\in T$  with $y>F(l)$ is unique. Indeed, assume $0<z_1<z_2\le F(l)$ satisfy \eqref{eq-T-X}, then by the strict concavity of $G$, we have
\begin{equation*}
\dfrac{G'(z_2)(y-z_2)-G'(z_1)(y-z_1)}{z_2-z_1}<\dfrac{G'(z_1)(y-z_2)-G'(z_1)(y-z_1)}{z_2-z_1}=-G'(z_1);
\end{equation*}
on the other hand, in view of equation (\ref{eq-T-X}) and the strict concavity of $G$ on $(0,y\wedge F(l)]$, one finds that
\begin{equation*}
\dfrac{G'(z_2)(y-z_2)-G'(z_1)(y-z_1)}{z_2-z_1}=\dfrac{(v-G(z_2))-(v-G(z_1))}{z_2-z_1}=\dfrac{G(z_1)-G(z_2)}{z_2-z_1}>-G'(z_1).
\end{equation*}
A contradiction occurs and the uniqueness is obtained.
\end{remark}

\begin{lemma}\label{lemma-uniqueness of non-linear equations}
The following system of equations
\begin{equation}\label{non-linear system of equations}
G'(z_1)=\dfrac{G(z_2)-G(z_1)}{z_2-z_1}=G'(z_2), ~~ 0<z_1<F(l)<z_2,
\end{equation}
 has at most one solution.
\end{lemma}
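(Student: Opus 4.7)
My plan is to argue by contradiction. System~\eqref{non-linear system of equations} says geometrically that the tangent line to $G$ at $z_1$ coincides with the tangent line to $G$ at $z_2$---that is, a single line is tangent to $G$ at both $z_1 \in (0,F(l))$ and $z_2 \in (F(l),+\infty)$. I would assume two distinct solutions $(z_1,z_2)$ and $(z_1',z_2')$ exist and derive a contradiction from the rigidity of this ``common tangent'' picture combined with Lemma~\ref{lemma-G}.

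\textbf{Step 1: Locate $z_2, z_2'$ in the strictly concave region $(F(x_g),+\infty)$.} Let $L$ be the tangent to $G$ at $z_1$. Strict concavity of $G$ on $(0,F(l))$ gives $L > G$ on $(0,F(l)) \setminus \{z_1\}$, and a chord-slope comparison (the slope of the chord from $z_1$ to $F(l)$ is strictly smaller than $G'(z_1)$ by strict concavity) upgrades this to $L(F(l)) > G(F(l))$, which then extends to a right-neighborhood of $F(l)$ by continuity. If $z_2$ belonged to the convex region $(F(l),F(x_g))$, convexity of $G$ there would force $L \le G$ on $(F(l),F(x_g))$, contradicting $L > G$ near $F(l)^+$; the case $z_2 = F(x_g)$ is ruled out the same way via a left-hand limit. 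Hence $z_2 \in (F(x_g),+\infty)$, and by symmetry $z_2' \in (F(x_g),+\infty)$.

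\textbf{Step 2: Order the solutions and compare the two tangent lines.} If $z_1 = z_1'$, the equal slopes $G'(z_1) = G'(z_1')$ through the common point $(z_1,G(z_1))$ force the two tangent lines to coincide, so a single line would be tangent to $G$ at two distinct points $z_2,z_2'$ of the strictly concave region $(F(x_g),+\infty)$, which is impossible. So WLOG $z_1 < z_1'$; strict decreasingness of $G'$ on $(0,F(l))$ yields $m := G'(z_1) > G'(z_1') =: m'$, and strict decreasingness of $G'$ on $(F(x_g),+\infty)$ combined with $G'(z_2)=m > m'=G'(z_2')$ yields $z_2 < z_2'$. Now let $L_1,L_2$ denote the two common tangent lines, with slopes $m > m'$. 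Strict concavity of $G$ on $(0,F(l))$ (respectively on $(F(x_g),+\infty)$) produces the four strict inequalities $L_2(z_1) > L_1(z_1)$, $L_1(z_1') > L_2(z_1')$, $L_2(z_2) > L_1(z_2)$, and $L_1(z_2') > L_2(z_2')$. The affine function $L_1 - L_2$ has slope $m - m' > 0$ and hence a \emph{unique} zero; but the above inequalities force this zero to lie in both $[z_1,z_1']$ and $[z_2,z_2']$, which are disjoint because $z_1' < F(l) < z_2$. This is the desired contradiction.

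The delicate point is Step~1, and in particular the strict separation $L(F(l)) > G(F(l))$: this does not follow automatically from the interior bound $L > G$ on $(0,F(l))$ by taking a limit, and must be obtained via the chord-slope argument that exploits strict concavity up to the endpoint. Once Step~1 is in hand, the rest is a clean ``two common tangents cannot coexist'' argument driven by the strict monotonicity of $G'$ on each of the two strictly concave regions.
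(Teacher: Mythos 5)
Your proof is correct and takes essentially the same route as the paper's: assume two distinct solutions, order them using the strict monotonicity of $G'$ on the two strictly concave pieces, and derive a contradiction from the fact that the two distinct common tangent lines would have to cross both in $(z_1,z_1')$ and in $(z_2,z_2')$, whereas an affine difference with nonzero slope has a unique zero. Your Step 1, which pins $z_2$ down in the strictly concave region $(F(x_g),+\infty)$ (and in passing covers the case $x_g=+\infty$, which the paper dismisses as apparent), makes explicit a fact the paper uses implicitly, so it is a tightening of the same argument rather than a different one.
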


\begin{proof}
When $x_g=+\infty$, it is apparent that (\ref{non-linear system of equations}) has no solution.
When $x_g<+\infty$, we shall prove the result by contradiction. Assume that $(z_1,z_2)$ and $(\tilde z_1,\tilde z_2)$  are two distinct solutions to (\ref{non-linear system of equations}). Without loss of generality, we assume $\tilde{z}_2>z_2$.  Since $G$ is strictly concave on $(0,F(l))$ and $(F(x_g),+\infty)$, we have $G'(\tilde{z}_2)<G'(z_2)$,  and hence $\tilde{z}_1>z_1$. This implies that the line through $(\tilde z_1, G(\tilde z_1))$ and $(\tilde z_2, G(\tilde z_2))$ intersects the line through $(z_1, G(z_1))$ and $( z_1, G( z_1))$ at two distinct points, the first coordinates of which are in $(z_1, \tilde z_1)$ and $(z_2, \tilde z_2)$, respectively. Then a contradiction occurs, and the proof is concluded.
\end{proof}

We shall solve the ESO problem (\ref{avf2}) in the following three cases :\\
 (A1) \ $L_{+\infty}\leq G'(F(l)^-)$;\\
(A2) \ $L_{+\infty}> G'(F(l)^-)$ and $\lim\limits_{y\rightarrow +\infty} G(y)-L_{z_{L_{+\infty}}}(y)>0$;\\
(A3) \ $L_{+\infty}> G'(F(l)^-)$ and $\lim\limits_{y\rightarrow +\infty} G(y)-L_{z_{L_{+\infty}}}(y)\leq 0$.\\
In (A2) and (A3), $z_{L_{+\infty}}$ is the unique solution of $G'(z)=L_{+\infty}$ on $z\in (0,F(l))$ and $L_{z_{L_{+\infty}}}$ is the tangent line of $G$ at $z_{L_{+\infty}}$. 
Note that by Lemma \ref{lemma-G}, the condition $L_{+\infty}> G'(F(l)^-)$ ensures the existence and uniqueness of the solution to $G'(z)=L_{+\infty}$ on $z\in (0,F(l))$.

\begin{remark} \label{cases}
\textbf{(Well-definedness)}
To show that the limit in (A2) and (A3) is well-defined, we set $B(y)\triangleq G(y)-L_{z_{L_{+\infty}}}(y)$. Then  when $x_g<+\infty$, $B(y)$ is strictly increasing on $((F(x_g),+\infty)$ since $G$ is strictly concave on $ ((F(x_g),+\infty)$ with $\lim\limits_{y \rightarrow +\infty} G'(y)=L_{+\infty}$; when $x_g=+\infty$, $B(y)$ is non-increasing on $((F(l),+\infty)$ since $G$ is convex on $ ((F(l),+\infty)$ with $\lim\limits_{y \rightarrow +\infty} G'(y)=L_{+\infty}$. Therefore, $\lim\limits_{y\to+\infty}B(y)$ is well-defined.
\end{remark}

We split our problems into the above three cases because, as we shall see in the proof of Theorem \ref{thm3.1}, equation \eqref{non-linear system of equations} has a unique solution in cases (A1) and (A2), while no solution in (A3),  because of which the value function $V(x)$ has formula \eqref{V*(x,s)} for cases (A1) and (A2), while a different formula \eqref{2 V*(x,s)} for case (A3).


The following theorem is the first main result for the ESO pricing problem, which provides a closed-form formula for $V(x)$ and characterizing the stopping region and the optimal strategy. 


\begin{theorem}\label{thm3.1}
The option pricing problem (\ref{avf2}) admits the solutions:
\begin{enumerate}

\item[(i)] In cases (A1) and (A2), 
\begin{equation}\label{V*(x,s)}
V(x)=
\begin{cases}
s, & \mbox{if} \ \ 0< x\leq x_1,\\
c_1 \psi(x)+c_2 \varphi(x), & \mbox{if} \ \ x_1< x < x_2,\\
x-K, & \mbox{if} \ \ x\geq x_2.
\end{cases}
\end{equation}
with the constants 
\begin{equation}\label{c1,c2}
c_1 \ = \ \dfrac{g(x_2)\varphi(x_1)-g(x_1)\varphi(x_2)}{\varphi(x_1)\psi(x_2)-\varphi(x_2)\psi(x_1)} , \ \ \ \ 
c_2 \ = \  \dfrac{g(x_1)\psi(x_2)-g(x_2)\psi(x_1)}{\varphi(x_1)\psi(x_2)-\varphi(x_2)\psi(x_1)} ,
\end{equation}
where the two critical levels $x_1$ and $x_2$ (with $x_1 <l\leq x_g<x_2$) are uniquely determined by 
\begin{equation} \label{x1,x2 equation 1}
G'\left(F(x_1)\right)=\dfrac{G\left(F(x_2)\right)-G\left(F(x_1)\right)}{F(x_2)-F(x_1)}=G'\left(F(x_2)\right),
\end{equation}
or equivalently,
\begin{equation}\label{x1,x2 equation 2}
\dfrac{\left( \dfrac{g}{\varphi} \right)'(x_1)}{F'(x_1)}=\dfrac{\left( \dfrac{g}{\varphi} \right)(x_2)-\left( \dfrac{g}{\varphi} \right)(x_1)}{F(x_2)-F(x_1)}=\dfrac{\left( \dfrac{g}{\varphi} \right)'(x_2)}{F'(x_2)}.
\end{equation}
Moreover, the optimal strategy is given by
\begin{equation}\label{optimal stopping time tau star }
\tau^\star:= \inf \lbrace t\geq 0: X_t \leq x_1  \ \ \mbox{or} \ \ X_t \geq x_2 \rbrace.
\end{equation}

\item[(ii)]
In case (A3), 
\begin{equation}\label{2 V*(x,s)}
V(x)=
\begin{cases}
s, & \mbox{if} \ \ 0<x\leq x_1,\\
c_1 \psi(x)+c_2 \varphi(x), & \mbox{if} \ \ x>x_1.
\end{cases}
\end{equation}
with the constants 
\begin{equation}\label{2 c1,c2}
c_1=L_{+\infty},  \ \ \ \  c_2=\dfrac{g(x_1)}{\varphi(x_1)}-L_{+\infty}\cdot F(x_1)
\end{equation}
and the critical level $x_1$ (with $x_1 < l$) given by
\begin{equation} \label{x1 equation}
\left(\dfrac{g}{\varphi}\right)'(x_1)=L_{+\infty}\cdot F'(x_1)
\end{equation}
 In this case,  the first exit time of $X$ from the continuation region $\{x: V^*(x,s)>g(x)\}=(x_1,\infty)$ is not an optimal stopping strategy.
\end{enumerate}
\end{theorem}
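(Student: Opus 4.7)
The proof rests on the Dayanik--Karatzas representation $V(x) = \varphi(x) \tilde W(F(x))$ from \cite[Proposition 5.12]{DK03}, already recalled in Section 2, where $\tilde W$ is the smallest non-negative concave majorant of the function $G$ defined in \eqref{G}. The plan is to construct $\tilde W$ explicitly in each of the three cases (A1)--(A3), using the structural information collected in Lemma \ref{lemma-G}, and then translate the formula back through $F$ and $\varphi$ to recover \eqref{V*(x,s)} or \eqref{2 V*(x,s)}. The continuation region is then read off as $\{x : \tilde W(F(x)) > G(F(x))\}$, and the optimal stopping rule follows.

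In cases (A1) and (A2), by Lemma \ref{lemma-G} the function $G$ is strictly concave on $(0, F(l))$ with $G'(0^+)=+\infty$, has an upward jump at $F(l)$, is convex on $(F(l), F(x_g))$ and strictly concave on $(F(x_g), +\infty)$ with $G'(y)\to L_{+\infty}$. The smallest concave majorant $\tilde W$ must therefore coincide with $G$ on some initial interval $[0, F(x_1)]$ with $x_1 < l$, be a straight segment on $[F(x_1), F(x_2)]$ with $x_2 > x_g$, and again coincide with $G$ on $[F(x_2), +\infty)$. For the resulting function to be concave, the segment must be tangent to $G$ at both endpoints, which is exactly the system \eqref{x1,x2 equation 1}. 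Existence of $(x_1, x_2)$ follows from an intermediate-value argument: as $z_1$ ranges over $(0, F(l))$ the tangent slope $G'(z_1)$ sweeps from $+\infty$ down to $G'(F(l)^-)$, and using Lemma \ref{lemma-T=X} together with the control at infinity (in (A1) via $L_{+\infty}\le G'(F(l)^-)$, in (A2) via the defining inequality $\lim_{y\to\infty}[G(y)-L_{z_{L_{+\infty}}}(y)]>0$) one shows that some tangent line $L_{z_1}$ meets $G$ a second time at a point $z_2\in (F(x_g),+\infty)$. Uniqueness of the pair is handed by Lemma \ref{lemma-uniqueness of non-linear equations}. Writing the tangent segment as $\tilde W(y)=c_1 y+c_2$ through the two touching points $(F(x_i),G(F(x_i)))$ then produces \eqref{c1,c2}, and thus $V(x)=\varphi(x)(c_1 F(x)+c_2)=c_1\psi(x)+c_2\varphi(x)$ on $(x_1,x_2)$, while $V(x)=\varphi(x)G(F(x))=g(x)$ on the complement, matching \eqref{V*(x,s)}. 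The optimality of the first exit time from $(x_1,x_2)$ is a standard consequence of the DK03 theory once the stopping region $\{V=g\}=(0,x_1]\cup[x_2,+\infty)$ has been identified.

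In case (A3), the hypothesis $\lim_{y\to\infty}[G(y)-L_{z_{L_{+\infty}}}(y)]\le 0$ means that no tangent from $(0,F(l))$ can touch $G$ a second time at a finite point, because the moving tangent with the smallest admissible slope already dominates $G$ at infinity. The concave majorant is therefore $G$ on $[0,F(x_1)]$ and the half-line of slope $L_{+\infty}$ through $(F(x_1), G(F(x_1)))$ on $[F(x_1),+\infty)$, with $x_1$ determined by matching slopes, which gives \eqref{x1 equation} and after translation the formulae \eqref{2 V*(x,s)}--\eqref{2 c1,c2}. To establish that the first exit time $\tau=\inf\{t\ge 0 : X_t\le x_1\}$ from the continuation region $(x_1,+\infty)$ is not optimal, I would use \eqref{eq-st} to compute $E_x[e^{-r\tau}g(X_\tau)]=g(x_1)\varphi(x)/\varphi(x_1)=c_2\varphi(x)$ for $x>x_1$ and compare with $V(x)=c_1\psi(x)+c_2\varphi(x)$, noting that $c_1=L_{+\infty}>0$ and $\psi(x)>0$; the gap $c_1\psi(x)$ is recovered only in the limit along an approximating sequence such as $\tau_n=\tau\wedge\inf\{t:X_t\ge n\}$ as $n\to\infty$.

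The main obstacle is expected to lie in the existence proof for the common tangent in case (A2): the convex-then-concave structure of $G$ on $(F(l),+\infty)$ combined with the upward jump of $G'$ at $F(l)$ forces one to track the vertical distance between the moving tangent and $G$ simultaneously on the convex bump and at infinity, and to verify the geometric picture suggested by Lemma \ref{lemma-T=X}. The non-optimality statement in (A3) is the other subtle point, since it requires identifying the optimizer only as a limit rather than as a single admissible stopping time.
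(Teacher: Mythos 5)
Your overall route coincides with the paper's: represent $V=\varphi\cdot\tilde W\circ F$ via \cite[Proposition 5.12]{DK03}, build the smallest nonnegative concave majorant of $G$ from the shape information in Lemma \ref{lemma-G}, characterize it by double tangency in (A1)--(A2) (with uniqueness from Lemma \ref{lemma-uniqueness of non-linear equations}) and by a single tangent half-line of slope $L_{+\infty}$ in (A3), then translate back through $F$ and $\varphi$. However, two points fall short of a proof. First, the existence of the double tangent in (A1)--(A2) --- which you yourself flag as the main obstacle --- is only sketched, and the sketch as stated does not close: an intermediate-value argument showing that ``some tangent line $L_{z_1}$ meets $G$ a second time'' yields a crossing, not a tangency, and a line that merely crosses $G$ twice cannot bound a concave majorant; moreover you never specify the continuous quantity to which the IVT is applied. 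The paper's mechanism is the crux of its Steps 1--2: first produce one tangent $L_{z_0}$ with $z_0\in(0,F(l))$ lying strictly below $G$ somewhere on $(F(l),+\infty)$ (via $T=X$ from Lemma \ref{lemma-T=X}, together with $G'(F(l)^+)>G'(F(l)^-)$ in (A1), resp.\ the defining inequality of (A2), and $G'(y)\to L_{+\infty}<G'(z_0)$ to force a second crossing); then set $z_1=\inf\{z\le z_0: L_z \text{ intersects } G \text{ on } (F(l),+\infty)\}$, show the intersection set $[s(z),b(z)]$ shrinks monotonically as $z$ decreases, and prove $s(z_1)=b(z_1)$, which is exactly tangency at the far point. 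Some such limiting/collapsing argument is indispensable; sweeping the slope alone does not produce it.

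Second, in (A3) your direct computation of the first-exit value is wrong as written: by \eqref{eq-st}, $E_x[e^{-r\tau_{x_1}}g(X_{\tau_{x_1}})]=g(x_1)\varphi(x)/\varphi(x_1)=(c_2+c_1F(x_1))\varphi(x)$, not $c_2\varphi(x)$, because $g(x_1)/\varphi(x_1)=c_2+L_{+\infty}F(x_1)$ by \eqref{2 c1,c2}; hence the deficit against $V(x)=c_1\psi(x)+c_2\varphi(x)$ is $c_1\varphi(x)\left(F(x)-F(x_1)\right)>0$ for $x>x_1$, not $c_1\psi(x)$. With that correction your argument does establish non-optimality, and it is in fact more self-contained than the paper, which simply invokes \cite[Proposition 5.14]{DK03}. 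Finally, in (A3) you assert but do not verify minimality of your candidate majorant; the paper supplies the short contradiction (any strictly smaller concave majorant has slope strictly below $L_{+\infty}$ past some $\eta>z_1$ and is therefore eventually dominated by $G$), and a version of this argument should be included.
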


\begin{proof}
We shall prove the two parts of the theorem separately.\\
{\bf Proof of (i).} In view of Lemma \ref{lemma-G},  if $x_g=+\infty$, then the convexity of $G$ on $(F(l), +\infty)$ implies that $L_{+\infty}\ge G'(F(l)^+)>G'(F(l)^-)$,
hence  $x_g<+\infty$ in case (A1); furthermore,  $G(y)$ cannot dominate $L_{z_{L_{+\infty}}}(y)$ when $y$ is large, since $G(F(l))< L_{z_{L_{+\infty}}}(F(l))$ and $G'(y)\le L_{+\infty}$ on $(F(l), +\infty)$ , and hence $x_g<+\infty$ in case (A2). We shall divide our proof into three steps.

{\bf Step 1.}  We first show that in cases (A1) and (A2), we can find  $z_0 \in (0,F(l))$
%
such that the tangent line $L_{z_0}$ intersects with $G$ at two distinct points on $(F(l),+\infty)$.

Let $z_{*}$ be the unique solution of $G'(z^-)=L_{+\infty} \vee G'(F(l)^-)$ on $z\in (0,F(l)]$. As usual, $L_{z_{*}}$ is the tangent line of $G$ at $z_{*}$. Then, by the properties of $G$ given in Lemma \ref{lemma-G},  in cases (A1) and (A2) there exists some point $(y_0,v_0)\in X$ such that  $y_0>F(l)$ and $L_{z_{*}}(y_0)<v_0<G(y_0)$. Indeed, in case (A1), note that $z_*=F(l)$ and the existence follows from the property $G'(F(l)^+)>G'(F(l)^-)$; in case (A2), it follows from that $G(y)$ dominates $L_{z_{L{+\infty}}}$ for large values of $y$.

 Since $T=X$ by Lemma \ref{lemma-T=X}, there exists $z_0 \in (0,F(l))$ such that the point $(y_0,v_0)$ is just on the tangent line $L_{z_0}$, that is, $L_{z_*}(y_0)<L_{z_0}(y_0)=v_0<G(y_0)$. According to Remark \ref{remark-T=X}, $z_0$ is unique. Also note that the strict concavity of $G$ on $(0,F(l))$ implies that for any $y_1<y_2\leq F(l)$, the line $L_{y_1}$ dominates the line $L_{y_2}$ on $[y_2,+\infty)$, and hence $z_0 \in (0,z_*)$ and $L_{z_0}(F(l))>L_{z_*}(F(l))\geq G(F(l))$. Moreover, $z_0 \in (0,z_*)$ implies that  $G'(z_0)>G'(z_*)\geq L_{+\infty}$. As a consequence, there must exist some point $n_0\in (y_0,+\infty)$ such that $L_{z_0}(n_0)>G(n_0)$ since $\lim_{y\rightarrow +\infty} G'(y)=L_{+\infty}$ by Lemma \ref{lemma-G}. Therefore,  $$(L_{z_0}-G)(F(l))>0, (L_{z_0}-G)(y_0)<0, \text{ and } (L_{z_0}-G)(n_0)>0$$ with $F(l)< y_0< n_0<+\infty$.  Hence, due to the continuity of $L_{z_0}-G$, there exist $s_0 \in (F(l),y_0)$ and $b_0 \in (y_0,n_0)$, such that $$L_{z_0}(s_0)=G(s_0)  \text{ and } L_{z_0}(b_0)=G(b_0).$$

 {\bf Step 2.} In this step,  we shall find a unique solution to \eqref{non-linear system of equations}.
 
 Note that the convex/concave property in Lemma \ref{lemma-G} implies that for any $z\in (0,F(l))$, the equation $L_z(y)=G(y)$ has at most two distinct solutions on $y\in ((F(l),+\infty)$.  Furthermore, $G(y)>L_{z_0}(y)$ for $y\in(s_0, b_0)$ and $b_0>F(x_g).$  Hence, by the property of $G$ given in Lemma \ref{lemma-G} , the set $\{ y>F(l): G(y)\geq L_{z}(y) \}$ is a closed interval, for $z\in [z_1,z_0]\subset [0,F(l))$,
 where
 
\begin{equation}\label{eq-z1}
z_1 \triangleq \inf \Big\{ z\leq z_0: L_z \text{ intersects with $G$ on } (F(l),+\infty) \Big\}.
\end{equation}

%
We denote, for $z\in[z_1, z_0]$, $$[s(z),b(z)] \triangleq \{ y>F(l): G(y)\geq L_{z}(y) \}.$$ Note that the strict concavity of $G$ on $(0,F(l))$ implies that for any $y_1<y_2\leq F(l)$, $L_{y_1}$ dominates $L_{y_2}$ on $[y_2,+\infty)$, and hence $s(z)$ is decreasing while $b(z)$ is increasing with respect to $z\in[z_1,z_0]$. Thus, $F(l)<s_0\leq s(z)\le b(z)\leq b_0< n_0$.  Also note that $b(z)\ge F(x_g),$ as the second intersection between $G$ and a line on $(F(l), +\infty)$ must occur where $G$ is concave.
 
Now we show that $s(z_1)=b(z_1)$. Assume $s(z_1)<b(z_1)$,   then for any $y\in (s(z_1),b(z_1))$, there exists $L_z$ such that $L_z(y)=G(y)$  by Lemma \ref{lemma-T=X}. Note that $L_z(y)=G(y)>L_{z_1}(y)$, and hence $z<z_1$. This is a contradiction with the definition of $z_1$.

Therefore, $L_{z_1}$ is tangent to $G$ at $z_1\in (0,F(l))$ and $z_2 \in [F(x_g),+\infty)$ where $z_1$ is given in \eqref{eq-z1} and $z_2\triangleq s(z_1)=b(z_1).$ Equivalently, the pair $(z_1,z_2)$ is a unique solution to \eqref{non-linear system of equations}, i.e.,
\begin{equation*}
 G'(z_1)=\dfrac{G(z_2)-G(z_1)}{z_2-z_1}=G'(z_2).
 \end{equation*} 
 
{\bf Step 3.} In this step, we construct the smallest concave majorant of $G$,  find an expression for the option price $V(x)$ in \eqref{avf2}, and identify the optimal stopping time. 

Define the function $L_{z_1,z_2}:[0,+\infty)\rightarrow \mathbb{R}$ as follows,
 \begin{equation*}
 L_{z_1,z_2}(y) \triangleq G(z_1)+ \dfrac{G(z_2)-G(z_1)}{z_2-z_1}(y-z_1), \ \ \ y\in [0,+\infty).
 \end{equation*}
The smallest non-negative concave majorant of $G$ on $[0,+\infty)$ is given by 
\begin{equation}\label{eq-major1}
\tilde{W}(y)=
\begin{cases}
G(y), & \mbox{if} \ y \in [0,z_1]\cup [z_2,+\infty), \\
L_{z_1,z_2}(y), & \mbox{if} \ y \in (z_1,z_2).
\end{cases}
\end{equation}
 Indeed, $\tilde{W}=G$ is obviously the smallest concave majorant of $G$ on $[0,z_1]\cup [z_2,+\infty)$, and on $(z_1, z_2)$, the line segment $L_{z_1,z_2}$ is the smallest concave curve that connects $(z_1,G(z_1))$ with $(z_2,G(z_2))$.

Denote $x \triangleq F^{-1}(y)$, $y\in [0,+\infty)$, and $x_i \triangleq F^{-1}(z_i)$, $i=1,2$. Then, $x_1 \in (0, l)$, $x_2 \in (l, +\infty)$. By \cite[Proposition 5.12]{DK03}, we have
\begin{equation}\label{eq-v-1}
V(x)=\varphi (x) \tilde{W} (F(x))=
\begin{cases}
g(x), & \mbox{if} \ x \in (0,x_1] \cup [x_2,+\infty) \\
\varphi(x) L_{z_1,z_2}(F(x)), & \mbox{if} \ x \in (x_1,x_2)
\end{cases}
\end{equation}
 Now, it is clear that  $(x_1, x_2)$ satisfies (\ref{x1,x2 equation 1}) or (\ref{x1,x2 equation 2}), since $(z_1, z_2)$ solves equation (\ref{non-linear system of equations}), and direct calculations yield
\begin{equation*}
V(x)=
\begin{cases}
s, & \mbox{if} \ x\leq x_1,\\
c_1 \psi(x)+c_2 \varphi(x), & \mbox{if} \ x_1< x < x_2,\\
x-K, & \mbox{if} \ x\geq x_2.
\end{cases}
\end{equation*}
with the constants 
\begin{equation*}
c_1 \ = \ \dfrac{g(x_2)\varphi(x_1)-g(x_1)\varphi(x_2)}{\varphi(x_1)\psi(x_2)-\varphi(x_2)\psi(x_1)} , \ \ \ \ 
c_2 \ = \  \dfrac{g(x_1)\psi(x_2)-g(x_2)\psi(x_1)}{\varphi(x_1)\psi(x_2)-\varphi(x_2)\psi(x_1)} .
\end{equation*}
The continuation region is 
\begin{equation*}
C^\star \triangleq \lbrace x \geq 0:V(x)>g(x) \rbrace=F^{-1}(\tilde{C}^\star)=F^{-1}\left( (z_1,z_2) \right)=(x_1,x_2).
\end{equation*}
where \begin{equation*}
\tilde{C}^\star \triangleq \lbrace y\in [0,+\infty): \tilde{W}(y)>G(y) \rbrace=(z_1, z_2).
\end{equation*}
Therefore, the stopping region  $\Gamma^\star \triangleq \lbrace x\geq 0: V(x)=g(x) \rbrace=(x_1, x_2)^C$ is the complement of $C^\star$, and consequently the optimal stopping time is
\begin{equation*}
\tau^\star = \inf \Big\{ t\geq 0: X_t \in  \Gamma^\star \Big\} = \inf \Big\{ t\geq 0: X_t \leq x_1 \ \ \mbox{or} \ \ X_t \geq x_2 \Big\}.
\end{equation*}

\noindent{\bf Proof of (ii).} In case (A3), note that for any $z\in(z_{L_{+\infty}},F(l)]$, there exists some $y_z\in [F(l),+\infty)$ such that $G(y)>L_z(y)$ for all $y>y_z$; for any $z\in (0,z_{L_{+\infty}}]$, we have $L_z(y)>G(y)$ on $y\in (z,+\infty)$. As a consequence, by the properties of $G$ given in Lemma \ref{lemma-G}, one cannot find a line tangent to $G$ at $(t_1, G(t_1)$ and $(t_2, G(t_2)$ such that $t_1\in (0, F(l))$ and $t_2\in (F(l), +\infty)$, and hence there is no solution for (\ref{non-linear system of equations}) in this case. By setting $z_1=z_{L_{+\infty}}$, the tangent line of $G$ at $z_1$ is then given as
\begin{equation*}
 L_{z_1}(y) \triangleq G(z_1)+ L_{+\infty}(y-z_1), \ \ \ y\in [0,+\infty)
 \end{equation*}
The smallest non-negative concave majorant of $G$ on $(0,+\infty)$ is given by 
\begin{equation*}
\tilde{W}(y)=
\begin{cases}
G(y), & \mbox{if} \ y \in [0,z_1), \\
L_{z_1}(y), & \mbox{if} \ y \in [z_1,+\infty).
\end{cases}
\end{equation*}
Indeed, suppose there is a concave majorant $\bar W$ of  $G$ smaller than $\tilde W$, i.e, $\bar W\neq \tilde W$ and $\bar W(y)\le \tilde W(y)$ for all $y\ge 0$. Note that $\bar W$ must coincide with $\tilde W$ on $[0,z_1].$ Then, there must exist a number $\eta \in (z_1,+\infty)$ such that $G(\eta)\le \bar W(\eta) < \tilde W(\eta).$   As $\bar W$ is concave, we have $\bar W'(\eta^+)\le \bar W'(\eta^-)<\bar W'(z_1^+)\le \bar W'(z_1^-)=L_{+\infty}.$  Therefore,  on  $ (\eta,+\infty)$, $\bar W$ is dominated by the line $\tilde W(\eta)+\bar W'(\eta_+)(y-\eta),$ which is below the graph of $G$ for large values of $y$, since $\bar W'(\eta_+)<L_{+\infty}$. A contradiction occurs, and hence $\tilde W$ is the smallest concave majorant.

As a consequence, we have
\begin{equation*}
V(x)=
\begin{cases}
s, & \mbox{if} \ \ 0<x\leq x_1,\\
c_1 \psi(x)+c_2 \varphi(x), & \mbox{if} \ \ x>x_1.
\end{cases}
\end{equation*}
where constants $c_1$ and $c_2$ given by (\ref{2 c1,c2}), and the critical level $x_1$ is uniquely determined by (\ref{x1 equation}) and $x_1<l$. In this case, the first exit time of $X$ from the continuation region is not an optimal strategy,  by  \cite[Proposition 5.14]{DK03},  since $+\infty>L_{+\infty}>G'(F(l)^-)>0$ and $(l,+\infty)\subset(x_1,\infty)$.
\end{proof}

\begin{remark}
Let $\Gamma$  denote the stopping region, i.e., 
$\Gamma \triangleq \{ x \in \mathcal{I}: V(x)=g(x) \}.$
Then in cases (A1) and (A2), $\Gamma=(0,x_1]\cup [x_2,+\infty)$ is two-sided, while in case (A3), $\Gamma=(0,x_1]$ is one-sided.
\end{remark}

In the following Propositions \ref{smooth-fit}  and  \ref{prop-V}, we provide some regularity results which are well known in Black-Scholes Model. We shall only prove the results for cases (A1) and (A2), and case (A3) can be treated in a similar way without extra difficulty.

%

\begin{proposition}[Smooth-fit principle] \label{smooth-fit}
 The value function $V(x)$ satisfies the smooth-fit condition $V'(x_i)=g'(x_i)$ with $x_i$ being the critical levels (optimal stopping points)  given in Theorem \ref{thm3.1}. Moreover, the value function $V$ is continuously differentiable on $\mathbb R^+$,  and  twice continuously differentiable on $\mathbb R^+\backslash\{x_1, x_2\}$. 
\end{proposition}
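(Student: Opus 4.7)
The plan is to lift the tangency condition \eqref{x1,x2 equation 1} for the smallest concave majorant $\tilde{W}$ and the function $G$ up to a $C^{1}$-matching condition for $V$ and $g$, by differentiating the two representations $V=\varphi\cdot(\tilde{W}\circ F)$ (from \eqref{formula-V}) and $g=\varphi\cdot(G\circ F)$ (from \eqref{G}, which is valid pointwise since $g>0$).

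First I would dispose of the $C^{2}$-regularity away from $\{x_{1},x_{2}\}$. By the explicit formula \eqref{V*(x,s)}, $V$ equals the constant $s$ on $(0,x_{1})$, equals $c_{1}\psi+c_{2}\varphi$ on $(x_{1},x_{2})$, and equals the affine function $x-K$ on $(x_{2},+\infty)$. Each piece is $C^{\infty}$ on its open interval because $\psi,\varphi\in C^{2}(\mathcal{I})$ solve $(\mathcal{L}-r)u=0$. Hence $V\in C^{2}(\mathbb{R}^{+}\setminus\{x_{1},x_{2}\})$ without further work, and passing to one-sided limits already yields $V'(x_{1}^{-})=0=g'(x_{1})$ and $V'(x_{2}^{+})=1=g'(x_{2})$.

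The substantive step is $C^{1}$-matching at $x_{1}$ and $x_{2}$. Set $z_{i}:=F(x_{i})$. By the construction in Theorem \ref{thm3.1}, $\tilde{W}=G$ on $[0,z_{1}]\cup[z_{2},+\infty)$ and $\tilde{W}=L_{z_{1},z_{2}}$ on $[z_{1},z_{2}]$, the latter being affine with slope $(G(z_{2})-G(z_{1}))/(z_{2}-z_{1})$. Since $x_{1}<l<x_{2}$ forces $z_{1}<F(l)<z_{2}$, and $G$ is $C^{2}$ on $(0,+\infty)\setminus\{F(l)\}$ by Proposition \ref{Prop-H} and Lemma \ref{lemma-G}, the function $G$ is $C^{1}$ at each $z_{i}$. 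The tangency identity \eqref{x1,x2 equation 1} states precisely that the common value $G'(z_{1})=G'(z_{2})$ equals the slope of $L_{z_{1},z_{2}}$, so the left and right derivatives of $\tilde{W}$ agree at each $z_{i}$, and thus $\tilde{W}\in C^{1}(0,+\infty)$. Differentiating $V=\varphi\cdot(\tilde{W}\circ F)$ classically (valid since $\varphi,F$ are smooth) and using contact $\tilde{W}(z_{i})=G(z_{i})$ together with tangency $\tilde{W}'(z_{i})=G'(z_{i})$,
\[
V'(x_{i})=\varphi'(x_{i})\tilde{W}(z_{i})+\varphi(x_{i})\tilde{W}'(z_{i})F'(x_{i})=\varphi'(x_{i})G(z_{i})+\varphi(x_{i})G'(z_{i})F'(x_{i})=g'(x_{i}),
\]
where the last step unfolds $g=\varphi\cdot(G\circ F)$. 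This delivers $V\in C^{1}(\mathbb{R}^{+})$ and the smooth-fit identity simultaneously.

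The one place where I expect some care is needed is the strict separation $z_{2}>F(l)$, which is what keeps $G$ smooth at $z_{2}$ (recall $G$ has a corner at $F(l)$). This is built into Theorem \ref{thm3.1} rather than an extra hypothesis: Step 2 there shows $z_{2}\ge F(x_{g})$ with $x_{g}\ge l$, while the theorem statement records $x_{2}>l$ strictly in cases (A1) and (A2). Beyond this checkpoint the proof is just bookkeeping with the chain rule, and case (A3) is handled analogously with only the single critical point $x_{1}<l$ to verify.
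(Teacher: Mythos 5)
Your proof is correct and follows essentially the same route as the paper: both establish differentiability of $V=\varphi\cdot(\tilde W\circ F)$ at the critical points by observing that the tangency condition \eqref{x1,x2 equation 1} makes $\tilde W$ differentiable at $z_i=F(x_i)$, and then read off $V'(x_i)=g'(x_i)$ together with the piecewise $C^2$ regularity from the explicit formula \eqref{eq-v-1}. Your explicit chain-rule computation and the checkpoint $z_2>F(l)$ (keeping $G$ smooth at the tangency points despite its corner at $F(l)$) simply spell out what the paper leaves as ``easy to check.''
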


\begin{proof}  {First note that $V(x)=\varphi (x) \tilde{W} (F(x))$, and it is differentiable at $x_1$ and $x_2$, since $\tilde W$ given by \eqref{eq-major1} is differentiable at $F(x_1)$ and $F(x_2)$. By \eqref{eq-v-1}, it is easy to check that $V$ is continuously differentiable on $\mathbb R^+$, and $V'(x_i)=g'(x_i), i=1,2.$}


By \eqref{eq-v-1}, $V$ is twice differentiable on $\mathbb R^+\backslash\{x_1, x_2\}$. Finally,  using $V(x)=\varphi (x) \tilde{W} (F(x))$, one obtains 
\begin{equation}\label{sign of the difference of W''}
V''(x_i^+) - V''(x_i^-) = \varphi(x_i) (F'(x_i))^2\cdot \left(\tilde{W}''(z_i^+)-\tilde{W}''(z_i^-)\right)
\end{equation}
with $z_i=F(x_i)$. In view of the linearity of $\tilde{W}$ on the transformed continuation region $(z_1,z_2)$ and the strict concavity of $\tilde{W}$ on the transformed stopping region $(0,z_1)\cup (z_2,+\infty)$, one finds that 
$\tilde{W}''(z_1^+)-\tilde{W}''(z_1^-)=0-\tilde{W}''(z_1^-)>0$ and $\tilde{W}''(z_2^+)-\tilde{W}''(z_2^-)=\tilde{W}''(z_2^+)-0<0$. 
Therefore,
\begin{equation} \label{non twice differentiable}
 V''(x_1^+)>V''(x_1^-) \ \mbox{and} \ V''(x_2^+)<V''(x_2^-).
\end{equation}

\end{proof}


\begin{remark}[Relationship with  free-boundary problems]\label{fb}
By the smooth-fit principle, one can check that $V(x)$ is a solution to the following free-boundary problem (also known as the differential variational inequality):
\begin{equation}\label{eq-fb}
\begin{cases}
\min \{(r-\mathcal{L})V(x), V(x)-g(x)\}=0\\
V'|_{\partial C}=g'|_{\partial C} \ \ \ \  \mbox{(smooth fit)} \\
V|_{\partial C}=g|_{\partial C} \ \ \ \ \mbox{(continuous fit)}
\end{cases}
\end{equation}
where $C$ is the waiting (continuation) region in which it is optimal to wait until the asset price reaches its boundary $\partial C$.


The free-boundary problems can be  solved using a ``guess-and-verify" technique (see, e.g., \cite{Peskir06}, \cite{Pham09}). More specifically, given one specific free-boundary problem,  firstly, one needs to guess the structure of the  stopping/continuation region and impose proper conditions (such as the smooth-fit and continuous-fit conditions) on the boundary of the stopping region; secondly, based on the guess and assumptions made in the first step, one may solve the free-boundary problem  through standard techniques from the theory of ODE/PDE; the final step is to verify that the solution obtained from the second step does satisfy the conditions imposed in the first step. In contrast, for  a general free-boundary problem \eqref{eq-fb}, Theorem \ref{thm3.1}  directly identifies the stopping/continuation regions  in cases (A1), (A2) and (A3),  provides a complete characterization of stopping strategies,  and gives  closed-form formulae for the solution. 
\end{remark}

\begin{proposition}\label{prop-V} The value function $V(x)$ is   increasing on $\mathcal I$.  If  $\varphi$ and $\psi$ are convex on $\mathcal I$, then $V(x)$ is globally convex, and in particular, it is  strictly convex and strictly increasing on the continuation region $C\triangleq\mathcal I\backslash \Gamma$. 
\end{proposition}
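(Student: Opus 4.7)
The statement has three components: global monotonicity (under Assumption A alone), global convexity (when $\psi$ and $\varphi$ are convex), and strict convexity/monotonicity on the continuation region $C$. For monotonicity, the plan is to invoke the comparison theorem for one-dimensional SDEs: for $x \le x'$ in $\mathcal{I}$, the strong solutions $X^{x}, X^{x'}$ driven by the same Brownian motion $B$ satisfy $X_t^{x} \le X_t^{x'}$ a.s., and since the reward $g$ in \eqref{reward'} is non-decreasing, $E_x[e^{-r\tau}g(X_\tau)] \le E_{x'}[e^{-r\tau}g(X_\tau)]$ for every stopping time $\tau$; taking the supremum over $\tau$ then gives $V(x) \le V(x')$.

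For global convexity, I will use the piecewise formulae from Theorem \ref{thm3.1}. On the stopping region, $V = g$ is either the constant $s$ or the affine function $x-K$, hence convex. On the continuation region, $V = c_1\psi + c_2\varphi$, so it suffices to verify $c_1 \ge 0$ and $c_2 \ge 0$, whereupon $V$ becomes a non-negative combination of the convex functions $\psi,\varphi$ and is convex there. From the construction of the smallest concave majorant $\tilde{W}$, $c_1$ equals the slope of the chord/tangent line ($k$ in cases (A1), (A2), or $L_{+\infty}$ in (A3)), which is positive because $G$ is strictly increasing by Lemma \ref{lemma-G}(i); while $c_2 = G(z_1) - z_1 G'(z_1)$ equals its $y$-intercept. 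The key geometric observation is that $c_2 \ge 0$: since $\tilde{W}$ is concave on $[0,+\infty)$ with $\tilde{W}(0) = 0$, concavity at $z_1$ gives $0 = \tilde{W}(0) \le \tilde{W}(z_1) - z_1 \tilde{W}'(z_1) = c_2$. To pass from piecewise to global convexity, I would patch the pieces using the smooth-fit principle (Proposition \ref{smooth-fit}): $V \in C^1(\mathcal{I})$, and $V'$ is non-decreasing on each subinterval, so continuity of $V'$ at $x_1$ and $x_2$ upgrades to global monotonicity of $V'$, hence global convexity of $V$.

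For the strict versions on $C$, $c_1 > 0$ together with the strict convexity of $\psi$ on $C$ (a linear segment of $\psi$ would force $\theta = rx-\mu$ to be constant on that segment via the ODE $(\mathcal{L}-r)\psi = 0$, which is ruled out in the non-degenerate setting) yields $V'' = c_1\psi'' + c_2\varphi'' > 0$ on $C$; combined with $V'(x_1) = g'(x_1^-) = 0$ from smooth-fit, this produces $V' > 0$ on $C$, i.e., strict monotonicity. The main obstacle I anticipate is verifying the non-negativity of $c_2$ via the geometric argument about the concave majorant above, and articulating the step from weak to strict convexity of $\psi$ on $C$, since Assumption A does not by itself preclude $\psi$ from being affine on sub-intervals.
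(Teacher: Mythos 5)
Your decomposition (SDE comparison for monotonicity; piecewise formulae plus smooth-fit patching for global convexity; sign analysis of $c_1,c_2$ for the strict statements on $C$) matches the paper's proof almost step for step, and your geometric identification of $c_1$ as the chord slope and $c_2$ as the $y$-intercept, with $c_2\ge 0$ from concavity of $\tilde W$ and $\tilde W(0)=0$, is a correct and attractive alternative to the paper's algebra. But your final step contains a genuine gap, and you half-anticipated it yourself: the claim that $\psi$ is strictly convex on $C$ because an affine piece of $\psi$ would force $\theta=rx-\mu$ to be constant, ``which is ruled out in the non-degenerate setting,'' is false under the paper's hypotheses. Assumption A only requires $\theta$ to be \emph{non-decreasing}, so constant $\theta$ is perfectly admissible, and it actually occurs in the paper's own example: for geometric Brownian motion with $\mu(x)=rx$ one gets $\gamma_0=1$, so $\psi(x)=x$ is globally affine (this is exactly case (A3)). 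There $\psi''\equiv 0$, and since you have only established $c_2\ge 0$ (not $c_2>0$), the inequality $V''=c_1\psi''+c_2\varphi''>0$ cannot be concluded; your strict-convexity step collapses precisely in this case.

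The paper closes this with two moves your proposal is missing. First, it proves $c_2>0$ \emph{strictly}, from the smooth-fit identity $c_1\psi'(x_1)+c_2\varphi'(x_1)=V'(x_1)=g'(x_1)=0$ together with $c_1>0$, $\psi'(x_1)>0$, $\varphi'(x_1)<0$. (Your geometric route can be upgraded to the same conclusion: on $[0,z_1]$ one has $\tilde W=G$, and $G$ is \emph{strictly} concave on $(0,F(l))\supset(0,z_1]$ by Lemma \ref{lemma-G}(ii), so the tangent at $z_1$ lies strictly above $G$ at $0$, giving $c_2=G(z_1)-z_1G'(z_1)>G(0)=0$; the same intercept computation covers case (A3).) Second, instead of strict convexity of $\psi$, the paper uses the identity $\psi'\varphi''-\varphi'\psi''=\frac{2r}{\sigma^2(x)}\,(\varphi\psi'-\psi\varphi')>0$, valid because $\psi$ and $\varphi$ both solve $(\mathcal L-r)u=0$; since $\psi'>0$, $\varphi'<0$ and $\psi'',\varphi''\ge 0$, this shows $\psi''$ and $\varphi''$ never vanish simultaneously, so $c_1>0$ and $c_2>0$ give $V''>0$ on $C$. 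Strict monotonicity then follows as you intended from $V'(x)>V'(x_1)=g'(x_1)=0$. With these two repairs (strict positivity of $c_2$, and the Wronskian-type identity in place of the false strict-convexity claim for $\psi$) your argument goes through; as written, it fails on the paper's own case (A3) example.
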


\begin{proof}
By \eqref{avf2} and the comparison principle for SDE \eqref{eq-X}, $V(x)$ is globally increasing on $\mathcal I$.  Now we prove the result when $\varphi$ and $\psi$ are convex on $\mathcal I$ . By Theorem \ref{thm3.1}, $V(x)$ is increasing and convex on the stopping regions, and by Proposition \ref{smooth-fit}, $V(x)$ is continuously differentiable on $\mathbb R^+$. Thus,  to prove the result, it suffices to prove that $V(x)$ is strictly convex and strictly increasing on the continuation region. 

First note that when $\psi(x)$ and $\varphi(x)$ are convex functions,  at least one of $\psi''(\cdot)$ and $\varphi''(\cdot)$ is non-zero on the continuation region. Indeed, since $\psi$ and $\varphi$ are two fundamental solutions of the ODE: $(\mathcal{L}-r)u(x)=0$, we have $\psi'\varphi'' -\varphi'\psi'' = \dfrac{2r}{\sigma^2(x)} (\varphi\psi' - \psi\varphi')>0$. Thus, to obtain the strict convexity of $V$ on $C$, it suffices to show  $c_1>0$ and $c_2>0$. 

Clearly $c_1$ given in \eqref{2 c1,c2} is positive. In (\ref{c1,c2}),  $c_1>0$ since $g(x)/\varphi(x)$ and $\psi(x)/\varphi(x)$ are strictly increasing. Note that the smooth-fit principle guarantees $V$ is differentiable at $x_1$, which yields
\begin{equation*}
c_1\psi'(x_1)+c_2\varphi'(x_1)=V'(x_1^+)=V'(x_1^-)=g'(x_1)=0,
\end{equation*}
and consequently $c_2>0$ since $c_1>0$ and $\psi'(x_1)>0, \varphi'(x_1)<0$.

Finally, the strict convexity of  $V$ on $C$ implies that $V'(x)> V'(x_1)=g'(x_1)=0$ for all $x \in C$, which implies that $V$ is strictly increasing on $C$.
\end{proof}

In reality, it is desirable to know the impact of the external parameters, such as the interest rate $r$, the minimum guarantee $l$ and the strike price $K$ (which can also be interpreted as the transaction cost), on investors' exercise strategy. 

\begin{proposition}[External comparison principles]\label{external comparison principles under ESO problem}
The critical level $x_1 \ (x_2)$ is increasing (decreasing) with respect to $r$; $x_1 \ (x_2)$ is decreasing (increasing) with respect to $K$; both $x_1$ and $x_2$ are increasing with respect to $l$.
\end{proposition}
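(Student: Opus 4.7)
The plan is to compare value functions (or appropriate excess functions) for different parameter values, and then exploit the identification of the critical levels $x_1,x_2$ as endpoints of either the continuation region $C=\{V>g\}$ or of the level sets $\{V>l-K\}$ and $\{V>x-K\}$.

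\textbf{Monotonicity in $r$.} Since $g$ does not depend on $r$ and $g\ge 0$, the map $r\mapsto V_r(x)=\sup_\tau E_x[e^{-r\tau}g(X_\tau)]$ is pointwise nonincreasing. For $r_1<r_2$ and $x\in\Gamma_{r_1}$, the squeeze $g(x)=V_{r_1}(x)\ge V_{r_2}(x)\ge g(x)$ forces $x\in\Gamma_{r_2}$, so $\Gamma_{r_1}\subset\Gamma_{r_2}$. Since $x_1(r)<l<x_2(r)$, this inclusion separates into $(0,x_1(r_1)]\subset(0,x_1(r_2)]$ and $[x_2(r_1),\infty)\subset[x_2(r_2),\infty)$, yielding $x_1(r_1)\le x_1(r_2)$ and $x_2(r_1)\ge x_2(r_2)$.

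\textbf{Monotonicity in $K$.} Here $g_K(x)=\max\{l,x\}-K$ depends on $K$, so I compare the excess $V_K-g_K$. Writing
\begin{equation*}
V_K(x)-g_K(x)=\sup_\tau E_x\bigl[e^{-r\tau}g_K(X_\tau)-g_K(x)\bigr],
\end{equation*}
one checks that the integrand, viewed as a function of $K$, has derivative $1-E_x[e^{-r\tau}]\ge 0$ for each fixed $\tau$. Hence $V_K(x)-g_K(x)$ is pointwise nondecreasing in $K$, so $C_K=\{V_K>g_K\}=(x_1(K),x_2(K))$ is set-nondecreasing in $K$, giving $x_1(K)$ nonincreasing and $x_2(K)$ nondecreasing in $K$.

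\textbf{Monotonicity in $l$.} Since $g_l$ is pointwise nondecreasing in $l$, so is $V_l$. For $x_2$ I restrict attention to $x>l_2$ (with $l_1<l_2$), where both rewards coincide with $x-K$; then $V_{l_1}(x)\le V_{l_2}(x)$ gives $\{V_{l_1}>x-K\}\cap(l_2,\infty)\subset\{V_{l_2}>x-K\}\cap(l_2,\infty)$. Using $V_l(x)=x-K$ exactly on $[x_2(l),\infty)$ and $V_l(x)>x-K$ on $(0,x_2(l))$, this yields $x_2(l_1)\le x_2(l_2)$. For $x_1$, the key identification is $\{x:V_l(x)>l-K\}=(x_1(l),\infty)$, which follows from $V_l\equiv l-K$ on $(0,x_1(l)]$ and the strict increase of $V_l$ on $(x_1(l),\infty)$. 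I then rewrite
\begin{equation*}
V_l(x)-(l-K)=\sup_\tau E_x\bigl[-(l-K)(1-e^{-r\tau})+e^{-r\tau}(X_\tau-l)^+\bigr];
\end{equation*}
the integrand's $l$-derivative is $-(1-e^{-r\tau})-e^{-r\tau}\mathbf 1_{X_\tau>l}\le 0$ for each $\tau$, so $V_l(x)-(l-K)$ is nonincreasing in $l$. Consequently $(x_1(l),\infty)$ is set-nondecreasing in $l$, i.e.\ $x_1(l)$ is nondecreasing in $l$.

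The main obstacle is the $l$-monotonicity of $x_1$: since both $V_l$ and the local stopping payoff $l-K$ grow with $l$, the bare comparison $V_{l_1}\le V_{l_2}$ is insufficient to pin down the direction of $x_1(l)$. One has to pass to the corrected functional $V_l-(l-K)$ and verify its pointwise decrease in $l$ via differentiation under the expectation, a step that uses the specific piecewise structure of $g_l=\max\{l,x\}-K$.
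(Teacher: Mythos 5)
Your proof is correct, and it takes essentially the paper's approach, with only a cosmetic reorganization in the $l$-part. For $r$ and $K$ your argument coincides with the paper's: pointwise monotonicity of $V_r$ in $r$ yields the inclusion of stopping (equivalently, continuation) regions, and your derivative-in-$K$ computation is just the infinitesimal form of the paper's pathwise bound $V_{K_1}-V_{K_2}\le K_2-K_1=g_{K_1}-g_{K_2}$, which gives monotonicity of the excess $V_K-g_K$ and hence of $C_K$; both of you then read off $x_1,x_2$ from nested intervals containing $l$. For $l$, your key fact --- that $V_l(x)-(l-K)$ is nonincreasing in $l$ --- is exactly the inequality the paper derives from $(x\vee l_2)\le (x\vee l_1)+(l_2-l_1)$ and then exploits by contradiction at a test point in $(x_1^{l_2},x_1^{l_1}\wedge x_2^{l_2})$; you instead package it as shrinkage of the superlevel sets $\{V_l>l-K\}=(x_1(l),\infty)$ and $\{V_l>x-K\}=(0,x_2(l))$, which gives a direct argument avoiding the paper's case analysis --- a mild but genuine gain in cleanliness. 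Two small repairs: the phrase ``set-nondecreasing'' applied to $(x_1(l),\infty)$ should read ``set-nonincreasing'' (a nonincreasing excess has shrinking superlevel sets, which is precisely what makes $x_1(l)$ nondecreasing; your stated conclusion is the correct one, so this is a slip of terminology, not of logic). Also, the strict increase of $V_l$ on $(x_1(l),\infty)$, which requires the convexity hypotheses of Proposition \ref{prop-V}, is not actually needed for the identification $\{V_l>l-K\}=(x_1(l),\infty)$: on the continuation region $V_l>g_l\ge l-K$ by definition, and on $[x_2(l),\infty)$ one has $V_l(x)=x-K>l-K$ since $x_2(l)>l$, so Theorem \ref{thm3.1} alone suffices, exactly as in the paper.
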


\begin{proof}
For the sake of convenience, let us denote the value function $V^{\star}(x,s)$ in (\ref{avf2}) by $V_l(x)$ to emphasize its dependence on $l$. Similar notations are also used for parameters $r$ and  $K$. Noting that $V(x)$ is decreasing in $r$ and $K$, and increasing in $l$, we have
\begin{equation*}
V_{r_1}(x)\ge V_{r_2}(x) \ \mbox{for} \ r_1<r_2 ; \ \ V_{K_1}(x)\ge V_{K_2}(x) \ \mbox{ for} \ K_1<K_2; \ \ V_{l_1}(x)\ge V_{l_2}(x) \ \mbox{ for} \ l_1<l_2.
\end{equation*}
We denote by $C_r$, $C_K$ and $C_l$, the continuation regions corresponding to $V_{r}(x)$, $V_{K}(x)$ and $V_{l}(x)$. Then, $V_{r_1}(x)\ge V_{r_2}(x)$  implies
\begin{equation} \label{C_rr}
C_{r_2}=\lbrace x: V_{r_2}(x)>g(x) \rbrace \subset \lbrace x: V_{r_1}(x)>g(x) \rbrace = C_{r_1}.
\end{equation}
Note also that 
\begin{align*}
&V_{K_1}(x)-V_{K_2}(x)\\
=&\sup_{\tau \geq 0} E_x\left[e^{-r\tau}\Big(\left((X_\tau \vee l) -K_2\right)+(K_2-K_1)\Big)\right]-\sup_{\tau \geq 0} E_x\left[e^{-r\tau}\left((X_\tau\vee l) -K_2\right)\right]\\
\leq& K_2-K_1=g_{K_1}(x)-g_{K_2}(x),
\end{align*}
i.e., $V_{K_1}(x)-g_{K_1}(x)\leq V_{K_2}(x)-g_{K_2}(x)$. Consequently, 
\begin{equation}\label{C_KK}
C_{K_1}=\lbrace x: V_{K_1}(x)>g_{K_1}(x) \rbrace \subset \lbrace x: V_{K_2}(x)>g_{K_2}(x) \rbrace = C_{K_2}.
\end{equation}
Therefore, the increment of the value $r$ $(K)$ accelerates (decelerates) rational exercises by shrinking (expanding) the continuation region.

For the parameter $l$, we denote $C_l :=(x_1^l,x_2^l)$. It is thus sufficient to show that $x_1^{l_1}\leq x_1^{l_2}$ and $x_2^{l_1}\leq x_2^{l_2}$ for $l_1<l_2$.
Suppose this is not the case so that $x_1^{l_1}> x_1^{l_2}$, and choose arbitrary $x \in (x_1^{l_2}, x_1^{l_1}\wedge x_2^{l_2})$. {By Theorem \ref{thm3.1} and Proposition \ref{prop-V},}
\begin{equation*}
V_{l_2}(x)> l_2-K \ \ \mbox{and} \ \ V_{l_1}(x)=l_1-K
\end{equation*}
for all $x \in (x_1^{l_2}, x_1^{l_1}\wedge x_2^{l_2})$.  As a result, we derive that $V_{l_2}(x)-V_{l_1}(x)>l_2-l_1$, which is impossible since $V_{l_2}(x)-V_{l_1}(x)\le l_2-l_1 $ due to the inequality $(x \vee l_2)\leq (x \vee l_1)+(l_2-l_1)$. 

Now we show that $x_2^{l_1}\leq x_2^{l_2}$. Indeed, assuming on the contrary that $x_2^{l_1}>x_2^{l_2}$ and taking any $x \in (x_2^{l_2},x_2^{l_1})$, then {$V_{l_2}(x)=x-K$ by Theorem \ref{thm3.1}, and $V_{l_1}(x)>x-K$ since $x$ is  in the continuation region of $V_{l_1}$. As a consequence, we derive that $V_{l_1}(x)>V_{l_2}(x)$ for $l_1<l_2$, which is  impossible by \eqref{avf1}}. Therefore, we conclude that $x_1^{l_1}\leq x_1^{l_2}$ and $x_2^{l_1}\leq x_2^{l_2}$.
\end{proof}

The following proposition indicates the dependence of the value function and the optimal exercise boundaries on the internal parameters (drift and volatility).

\begin{proposition}[Internal comparison principles]\label{internal comparison principles under ESO problem}
The value function $V(x)$ is non-decreasing with respect to the drift  $\mu$. That is,  $\ V_{\mu_1}(x)\leq V_{\mu_2}(x))$ for all $x\in\mathcal I$, if $\mu_1(x) \leq \mu_2(x)$ for all $x\in \mathcal I$. As a consequence, the critical level $x_1 \ (x_2)$ is non-increasing (non-decreasing) with respect to  $\mu$.

If we further assume that $\varphi$ and $\psi$ are convex on $\mathcal I$, we have a similar result for the volatility $\sigma.$ That is, the value function $V(x)$ is also non-decreasing with respect to the volatility  $\sigma$. As a consequence, the critical level $x_1 \ (x_2)$ is non-increasing (non-decreasing) with respect to $\sigma$. 
\end{proposition}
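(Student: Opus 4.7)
For the drift monotonicity, I would couple two solutions of (\ref{eq-X}) driven by the same Brownian motion with common volatility $\sigma$ but drifts $\mu_1\le\mu_2$, and invoke the classical one-dimensional comparison theorem for SDEs: with the same initial point, $X_t^{\mu_1}\le X_t^{\mu_2}$ for all $t\ge 0$ almost surely. Since $g(y)=s+(y-l)^+$ is non-decreasing, for every stopping time $\tau$,
\[
e^{-r\tau}g(X^{\mu_1}_\tau)\;\le\;e^{-r\tau}g(X^{\mu_2}_\tau),
\]
and taking expectations and then the supremum over $\tau$ yields $V_{\mu_1}(x)\le V_{\mu_2}(x)$. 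Because $g$ does not depend on $\mu$, this forces $C_{\mu_1}=\{V_{\mu_1}>g\}\subset\{V_{\mu_2}>g\}=C_{\mu_2}$; by Theorem~\ref{thm3.1} each continuation region is an interval $(x_1^\mu,x_2^\mu)$ (with $x_2^\mu=+\infty$ in case~(A3)), and the inclusion gives $x_1^{\mu_2}\le x_1^{\mu_1}$ and $x_2^{\mu_2}\ge x_2^{\mu_1}$.

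\textbf{Volatility.} For the diffusion coefficient the pathwise comparison is not available, so I would argue analytically via convexity. Fix $\sigma_1\le\sigma_2$, write $V_i$ for the value function associated with $\sigma_i$, and set $\mathcal L_i=\tfrac12\sigma_i^2\partial_{xx}+\mu\partial_x$. Under the added convexity of $\psi,\varphi$, Proposition~\ref{prop-V} gives that $V_2$ is globally convex, so $V_2''\ge 0$, while the free-boundary formulation in Remark~\ref{fb} gives $(\mathcal L_2-r)V_2\le 0$. Combining these with $\sigma_1^2\le\sigma_2^2$,
\[
(\mathcal L_1-r)V_2\;=\;(\mathcal L_2-r)V_2-\tfrac12(\sigma_2^2-\sigma_1^2)V_2''\;\le\;0\quad\text{on }\mathcal I.
\]
Applying the generalised It\^o formula to $e^{-rt}V_2(X_t^{\sigma_1})$ — legitimate since $V_2\in C^1(\mathcal I)$ by Proposition~\ref{smooth-fit} and convex — and localising by $\tau_n=\inf\{t\ge 0:X_t^{\sigma_1}\notin[1/n,n]\}$, one obtains, for every stopping time $\tau$,
\[
E_x\bigl[e^{-r(\tau\wedge\tau_n)}V_2(X^{\sigma_1}_{\tau\wedge\tau_n})\bigr]\;\le\;V_2(x).
\]
Since $V_2\ge g$, sending $n\to\infty$ with the transversality condition~(\ref{condition-tran}) and the explicit growth of $V_2$ coming from Theorem~\ref{thm3.1} yields $E_x[e^{-r\tau}g(X^{\sigma_1}_\tau)]\le V_2(x)$. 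Taking the supremum over $\tau$ gives $V_1\le V_2$, and the monotonicity of the critical levels then follows exactly as in the drift step.

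\textbf{Main obstacle.} The delicate point is the rigorous application of the It\^o formula: $V_2$ is convex and $C^1$, but only $C^2$ off the two free boundaries $x_1^{\sigma_2},x_2^{\sigma_2}$, where the one-sided second derivatives differ (see~(\ref{non twice differentiable})). I expect to resolve this via the It\^o--Tanaka formula for convex functions, so that $V_2''$ is treated as a non-negative measure and the local-time contribution at the free boundaries carries a favourable sign; an alternative is to mollify $V_2$ into a sequence of $C^2$ convex approximants and pass to the limit. The localisation at $0$ and $+\infty$ likewise uses~(\ref{condition-tran}) and the explicit form of $V_2$ from Theorem~\ref{thm3.1} to push the boundary contributions to zero.
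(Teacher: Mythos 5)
Your proposal is correct in substance but routes the two halves differently from the paper. For the drift, you invoke the pathwise SDE comparison theorem plus monotonicity of $g$; this is precisely the alternative the paper itself flags in Remark \ref{remark3.6}, and it carries the caveat noted there: the classical comparison theorem (e.g.\ \cite[Proposition 2.18]{KS}) requires additional regularity of the coefficients (a Lipschitz drift and a H\"older-$1/2$-type modulus for $\sigma$) that \textbf{Assumption A} does not supply. The paper instead proves the drift case by the same Dynkin-formula argument as the volatility case, using $(\mathcal{A}_{\mu_1}-\mathcal{A}_{\mu_2})V_{\mu_2}=(\mu_1-\mu_2)V_{\mu_2}'\le 0$ via the \emph{monotonicity} of the value function from Proposition \ref{prop-V} — which is why the statement imposes convexity of $\psi,\varphi$ only for the $\sigma$-comparison; your pathwise route shares that advantage but trades it for the extra coefficient hypotheses. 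For the volatility, your key inequality $(\mathcal{L}_1-r)V_2\le(\mathcal{L}_2-r)V_2-\tfrac12(\sigma_2^2-\sigma_1^2)V_2''\le 0$, combining Remark \ref{fb} with the convexity from Proposition \ref{prop-V}, is exactly the paper's; the difference is in the conclusion step: you make $e^{-rt}V_2(X_t^{\sigma_1})$ a global supermartingale dominating $g$, so that $V_2$ is an $r$-excessive majorant of $g$ for the $\sigma_1$-diffusion and hence dominates $V_1$ directly, whereas the paper localizes to $(y,z)\subset C_{\sigma_1}$, uses that $e^{-rt}V_{\sigma_1}(X^{\sigma_1}_{t\wedge\tau^{\sigma_1}_{y,z}})$ is a bounded martingale there, and compares $V_{\sigma_2}-V_{\sigma_1}$ at the endpoints $x_i^{\sigma_1}$ where $V_{\sigma_1}=g\le V_{\sigma_2}$. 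Your global version is slightly cleaner (it also dispenses with transversality in the limit $n\to\infty$: since $g\ge 0$, Fatou/monotone convergence on $\{\tau\le\tau_n\}$ suffices). Finally, you explicitly confront a point the paper passes over in silence: $V_2$ is only $C^1$ across its own free boundaries (cf.\ \eqref{non twice differentiable}), which a priori may lie in the region where Dynkin's formula is applied. Your fix is sound and closes this gap for both proofs: since $V_2$ is convex and $C^1$ by Proposition \ref{smooth-fit}, its second-derivative measure has no atoms and admits an a.e.\ density, so the It\^o--Tanaka formula together with the occupation-times formula yields exactly the stated supermartingale inequality, with the (nonexistent) local-time atoms at $x_1^{\sigma_2},x_2^{\sigma_2}$ contributing nothing.
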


\begin{proof} First we show the comparison principle for the volatility $\sigma.$ Denote by $\Gamma_\sigma \ (\mbox{resp.} \ C_\sigma)$ the stopping region (resp. continuation region) of $V_\sigma$. Note that on $\Gamma_{\sigma_1}$, the option value $V_{\sigma_1}(x)=g(x)$ is from an immediate exercise, and hence $V_{\sigma_2}(x)\geq V_{\sigma_1}(x)$ on $\Gamma_{\sigma_1}$. Thus it suffices  to show $V_{\sigma_2}(x)\geq V_{\sigma_1}(x)$ on $C_{\sigma_1}$. 

Denote
\begin{equation*}
\mathcal{A}_{\sigma} \triangleq \mathcal{L}_{\sigma} -r=\frac{1}{2}\sigma^2(x)\frac{d^2}{dx^2} +\mu (x) \frac{d}{dx}-r,
\end{equation*}
which is the generator of the underlying diffusion $X^\sigma$ killed at a constant rate $r$. Unless otherwise specified, we will use similar simplified representations, for instance, we denote by $(\psi_{\sigma},\varphi_{\sigma})$ a pair of fundamental solutions to $(\mathcal{A}_{\sigma}u)(x)=0$. Recall that $\psi_{\sigma}(x)$ is strictly increasing whilst $\varphi_{\sigma}(x)$ is strictly decreasing. 

Next, let $\tau_{y,z}^{\sigma_1}:= \tau_y^{\sigma_1} \wedge \tau_z^{\sigma_1}$ with $\tau_y^{\sigma_1}:= \inf \lbrace t\geq 0: X_t^{\sigma_1}=y \rbrace$ and $\tau_z^{\sigma_1}:= \inf \lbrace t\geq 0: X_t^{\sigma_1}=z \rbrace$. Note that $E_x[\tau_{y,z}^{\sigma_1}]<+\infty$ (see, e.g.,  \cite[Lemma 46.1]{DW1987}).  By Dynkin's formula, for all $y \leq x \leq z$, 
\begin{align*}
E_x\left[ e^{-r\tau_{y,z}^{\sigma_1}} V_{\sigma_2}( X_{\tau_{y,z}^{\sigma_1}}^{\sigma_1}) \right] \ &= \ V_{\sigma_2}(x)+ E_x {\int}_0^{\tau_{y,z}^{\sigma_1}}e^{-rt}\left(\mathcal{A}_{\sigma_1}V_{\sigma_2}\right)\left(X_t^{\sigma_1}\right)dt  \\
\ &\leq \ V_{\sigma_2}(x)+ E_x {\int}_0^{\tau_{y,z}^{\sigma_1}}e^{-rt}\left(\left(\mathcal{A}_{\sigma_1}-\mathcal{A}_{\sigma_2}\right)V_{\sigma_2}\right)\left(X_t^{\sigma_1}\right)dt \\ 
\ &= \ V_{\sigma_2}(x)+ E_x {\int}_0^{\tau_{y,z}^{\sigma_1}}e^{-rt}\left(\dfrac{1}{2} \left(\sigma_1^2\left(X_t^{\sigma_1}\right)-\sigma_2^2\left(X_t^{\sigma_1}\right)\right)V''_{\sigma_2}\left(X_t^{\sigma_1}\right)\right)dt 
\\
\ &\leq \ V_{\sigma_2}(x)
\end{align*}
since $(\mathcal{A}_{\sigma_2}V_{\sigma_2})(x) \leq 0$ by Remark \ref{fb} and $V''_{\sigma_2}(x)\geq 0$ by Proposition \ref{prop-V}. As a consequence, we see that for $x \in (y,z) \subset C_{\sigma_1}$, 
\begin{align*}
V_{\sigma_2}(x) \ &\geq \ E_x\left[ e^{-r\tau_{y,z}^{\sigma_1}} V_{\sigma_2}( X_{\tau_{y,z}^{\sigma_1}}^{\sigma_1}) \right]  \\
&= \ V_{\sigma_2}(y)E_x\left[ e^{-r\tau_y^{\sigma_1}}; \tau_y^{\sigma_1} < \tau_z^{\sigma_1} \right] + V_{\sigma_2}(z)E_x\left[ e^{-r\tau_z^{\sigma_1}}; \tau_y^{\sigma_1} > \tau_z^{\sigma_1} \right] 
\end{align*}
On the other hand, $X_t^{\sigma_1}\in (y,z)\subset C_{\sigma_1}$, $P_x$-almost surely for $t\in [0, \tau_{y,z}^{\sigma_1})$, hence $e^{-r\left(t\wedge \tau_{y,z}^{\sigma_1}\right)} V_{\sigma_1}\left( X_{t\wedge \tau_{y,z}^{\sigma_1}}^{\sigma_1}\right) $ is a bounded continuous $P_x$-martingale, and thus
\begin{align*}
V_{\sigma_1}(x) \ =\ V_{\sigma_1}(y)E_x\left[ e^{-r\tau_y^{\sigma_1}}; \tau_y^{\sigma_1} < \tau_z^{\sigma_1} \right] + V_{\sigma_1}(z)E_x\left[ e^{-r\tau_z^{\sigma_1}}; \tau_y^{\sigma_1} > \tau_z^{\sigma_1} \right] 
\end{align*}
Substracting $V_{\sigma_1}(x)$ from $V_{\sigma_2}(x)$ and taking $y =x_1^{\sigma_1}$ and $z = x_2^{\sigma_1}$, we see that for all $x\in C_{\sigma_1}$,
\begin{align*}
\left(V_{\sigma_2}-V_{\sigma_1}\right)(x) \ &\geq \ \left(V_{\sigma_2}-V_{\sigma_1}\right)(x_1^{\sigma_1}) \ E_x\left[ e^{-r\tau_{x_1^{\sigma_1}}^{\sigma_1}}; \tau_{x_1^{\sigma_1}}^{\sigma_1} < \tau_{x_2^{\sigma_1}}^{\sigma_1} \right] \\
&\ \ \ \ + \left(V_{\sigma_2}-V_{\sigma_1}\right)(x_2^{\sigma_1}) \ E_x\left[ e^{-r\tau_{x_2^{\sigma_1}}^{\sigma_1}}; \tau_{x_1^{\sigma_1}}^{\sigma_1} > \tau_{x_2^{\sigma_1}}^{\sigma_1} \right] \\
& \geq \ 0
\end{align*} 
where the last inequality holds because $\left(V_{\sigma_2}-V_{\sigma_1}\right)(x_i^{\sigma_1})=\left(V_{\sigma_2}-g\right)(x_i^{\sigma_1})\geq 0$ for $i=1,2$.
Consequently, we obtain that $V_{\sigma_1}(x)\leq V_{\sigma_2}(x)$ for $x \in C_{\sigma_1}$. 

In a similar manner, one can show that $V_{\mu_1}(x)\leq V_{\mu_2}(x)$ for $x \in C_{\mu_1}$. \

Finally, $V_{\mu_1}\leq V_{\mu_2}$ on $C_{\mu_1}$ and $V_{\sigma_1}\leq V_{\sigma_2}$ on $C_{\sigma_1}$ imply that $C_{\mu_1}\subset C_{\mu_2}$ and $C_{\sigma_1}\subset C_{\sigma_2}$, respectively. This is because, for instance, for $x\in C_{\sigma_1}$ $g(x)<V_{\sigma_1}(x)\le V_{\sigma_2}(x)$, and hence $x\in C_{\sigma_2}.$ Therefore, the critical level $x_1$ is non-increasing with respect to $\mu$ and $\sigma$, while $x_2$ is non-decreasing with respect to $\mu$ and $\sigma$.
\end{proof}

\begin{remark}
 Proposition \ref{internal comparison principles under ESO problem} indicates that increased volatility $\sigma$ and/or drift $\mu$  results in increased option value  and expanded continuation region (longer waiting time before a rational exercise).
\end{remark}

\begin{remark} \label{remark3.6}
To prove the comparison principle with respect to the drift $\mu,$ assuming further suitable conditions on $\mu$ and $\sigma$, one may directly apply the comparison principle (see, e.g., \cite[Proposition 2.18]{KS}) for  SDE \eqref{eq-X} and then use  \eqref{avf2}. However, the properties of the value function in Proposition \ref{prop-V} is critical to obtain the comparison principle with respect to $\sigma$, since we do not have such a comparison principle for SDE with respect to the volatility.
\end{remark}

We finish this section by considering an example, the  employee stock option driven by a geometric Brownian motion,  which was studied in \cite{XG01}.
 
\begin{example}
Let $X$ be a geometric Brownian motion with drift $\mu$ and volatility $\sigma$, that is, $$X_t=X_0 \exp\left(\mu t+\sigma B_t-\frac{1}{2}\sigma^2 t\right).$$ The equation \eqref{eq-L} now is 
\begin{equation}\label{eso1}
\frac{1}{2}\sigma^2 x^2 u''(x) +\mu x u'(x) -ru(x)=0.
\end{equation}  
It is known that $\psi(x)=x^{\gamma_0}$ and $\varphi(x)=x^{\gamma_1}$ are two linearly independent solutions to \eqref{eso1}, where $\gamma_1<0<\gamma_0$ are the two solutions to 
$\frac{1}{2}\sigma^2 \gamma^2+(\mu-\frac{1}{2}\sigma^2)\gamma-r=0$.

When $\mu>r$, we have $V(x)=+\infty$. Indeed, 
\begin{align*}
V(x)\ &= \ \sup_{\tau}E_x\left[e^{-r\tau}\left((X_{\tau}\vee l)-K\right)\right]\\
&\geq \ \sup_{\tau}E_x\left[e^{-r\tau}X_{\tau}\right]-K\\
\ &\geq \ E_x[X_0 \exp\{(\mu-r)t+(\sigma B_t-\frac{1}{2}\sigma^2t)\}]-K\\
\ &= \ x \cdot e^{(\mu-r)t}-K 
\end{align*}
where the last term tends to $\infty$ as $t\to \infty.$

When $\mu<r$, we have $\gamma_0>1$, and hence $$L_{+\infty}  =\lim_{x\rightarrow +\infty} \dfrac{1}{\psi'(x)}=\lim_{x\rightarrow +\infty} \dfrac{1}{\gamma_0 x^{\gamma_0-1}}=0.$$ Therefore $L_{+\infty}<G'(F(l)^-)$ by Lemma \ref{lemma-G},  and part (i) of Theorem \ref{thm3.1} is applicable. By doing some calculations for  equations (\ref{x1,x2 equation 2}) and (\ref{c1,c2}), one may get the same value function as that in \cite[Theorem 2] {XG01}.

When $\mu=r$,  $(\mathcal{L}-r)g(x)=rK>0$ for $x>l$. It follows from Proposition \ref{Prop-H} and part (ii) of Lemma \ref{lemma-G} that $x_g=+\infty$, where $x_g$ is defined in Lemma \ref{lemma-G}, and this only happens in case (A3) (see the beginning of the proof of Theorem \ref{thm3.1}). Therefore, one may apply part (ii) of Theorem \ref{thm3.1} and get the same value function as in  \cite[Corollary 1]{XG01}.


\end{example}

\section{On an American put option with barrier}
In this section, we consider the pricing problem for an American put option with a barrier, which is formulated as the following optimal stopping problem:
\begin{equation}\label{vf}
V(x)\triangleq \sup_{\tau \geq 0} E_x[e^{-r\tau} (q-X_{\tau})^+I_{\lbrace \tau<\tau_d \rbrace }], ~ x\in (0,d) 
\end{equation}
with $\tau_d \triangleq \inf \lbrace t\geq 0: X_t = d \rbrace$, where  $d>0$ is the barrier, $q \in (0,d)$ is the strike price, and $X$ is the price process given in \eqref{eq-X}.

Denote by $\tilde{X}$ the stopped price process of $X$, which starts in $(0,d)$ and is absorbed when it reaches the barrier $d$. Then $\tilde X$ has a natural left boundary $0$ and an absorbing right boundary $d$. Now the value function $V(x)$ defined by \eqref{vf} can be written as \begin{align}\label{V(x)'}
V(x)=\sup_{\tau \geq 0} E_x[e^{-r\tau} h(\tilde{X}_\tau)], \ \ \ x\in (0,d)
\end{align}
where $h(x)\triangleq (q-x)^+$ is the reward function.

In this section, we shall apply \cite[Proposition 5.5]{DK03} to obtain a closed-form expression for  $V(x)$ given by \eqref{V(x)'} (see Theorem \ref{value function for barrier put option with American feature}), which extends the result in \cite[Section 6.1]{DK03}. As a byproduct, we also obtain some results for $V(x)$ on its regularity and comparison principles.

Throughout this section, besides {\bf Assumption A}, we also assume that $\varphi(x)$ is convex on $(0,+\infty)$. Note that a sufficient condition for the convexity of $\varphi(x)$ (and $\psi(x)$)  is the tranversality condition  \eqref{condition-tran}.

 \begin{lemma}\label{lem-mu}
Under the assumptions  that $0$ is a natural boundary and that there exists some $r_0>0$ such that the function $\theta_{r_0}(x)=r_0x-\mu(x)$ is non-decreasing, we have  $\liminf\limits_{x\to 0^+} \mu(x)\ge 0$.  \end{lemma}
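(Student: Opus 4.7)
The plan is to argue by contradiction. I would first observe that, since $\theta_{r_0}(x)=r_0 x - \mu(x)$ is non-decreasing, the limit $\theta_0:=\lim_{x\to 0^+}\theta_{r_0}(x)=\inf_{x>0}\theta_{r_0}(x)$ exists in $[-\infty,\infty)$, and because $r_0 x\to 0$, one has $\lim_{x\to 0^+}\mu(x)=-\theta_0$ (in particular the $\liminf$ is an honest limit). Thus the negation $\liminf_{x\to 0^+}\mu(x)<0$ forces $\theta_0$ to be a strictly positive finite number, and the monotonicity of $\theta_{r_0}$ then upgrades this to the global bound $\theta_{r_0}(x)\ge \theta_0>0$ for every $x\in\mathcal I$.

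The core idea is to apply It\^o's formula to the discounted state process $e^{-r_0 t}X_t$, for which
\begin{equation*}
d(e^{-r_0 t}X_t)=-e^{-r_0 t}\theta_{r_0}(X_t)\,dt+e^{-r_0 t}\sigma(X_t)\,dB_t\le -\theta_0 e^{-r_0 t}\,dt+e^{-r_0 t}\sigma(X_t)\,dB_t.
\end{equation*}
Because $0$ is a natural boundary and the SDE admits a unique strong non-exploding solution, $X$ stays in $(0,+\infty)$ for all $t\ge 0$ almost surely. Choosing any initial condition $x_0\in(0,\theta_0/r_0)$ and taking expectations after a suitable localization argument should yield
\begin{equation*}
E_{x_0}[e^{-r_0 t}X_t]\ \le\ x_0-\frac{\theta_0}{r_0}\bigl(1-e^{-r_0 t}\bigr).
\end{equation*}
For $t$ large enough the right-hand side is strictly negative, whereas the left-hand side is non-negative since $X_t\ge 0$. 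This contradiction forces $\theta_0\le 0$, i.e.\ $\liminf_{x\to 0^+}\mu(x)\ge 0$.

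The main technical point is to justify that the stochastic integral contributes zero expectation, since $\sigma$ need not be bounded near $0$ or near $+\infty$. I plan to handle this with the localizing times $\tau_n:=\inf\{t\ge 0:X_t\ge n\text{ or }X_t\le 1/n\}$; naturalness of $0$ together with continuity of $X$ give $\tau_n\to\infty$ a.s., and on $[0,t\wedge\tau_n]$ the integrand $e^{-r_0 s}\sigma(X_s)$ is bounded, so that $M_{t\wedge\tau_n}:=\int_0^{t\wedge\tau_n}e^{-r_0 s}\sigma(X_s)\,dB_s$ is a genuine martingale with zero mean. Passing to the limit via Fatou's lemma on the left-hand side of the stopped identity and monotone convergence on the drift integral then produces the displayed inequality for $E_{x_0}[e^{-r_0 t}X_t]$ and closes the argument.
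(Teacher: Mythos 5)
Your proof is correct, and it takes a genuinely different route from the paper's. The paper argues inside its analytic framework: assuming $\liminf_{x\to 0^+}\mu(x)\le -r_0 q$ for some $q>0$, it tests with the put payoff $h(x)=(q-x)^+$, uses Proposition \ref{Prop-H} to deduce that $H=(h/\varphi)\circ F^{-1}$ is convex on $(0,F(q))$ with $H'(F(q)^-)<0$, hence $H$ is decreasing there and $H(0^+)>0$, contradicting $H(0^+)=0$, which is forced by $\varphi(0^+)=+\infty$ at a natural boundary. Your argument is probabilistic and more elementary: monotonicity of $\theta_{r_0}$ turns the liminf into an honest limit $-\theta_0$ and upgrades the negation to the global bound $\theta_{r_0}(x)\ge\theta_0>0$, after which a supermartingale estimate on $e^{-r_0t}X_t$ started at $x_0<\theta_0/r_0$ forces a negative expectation of a nonnegative quantity. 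This bypasses $\psi$, $\varphi$, $F$ and Proposition \ref{Prop-H} entirely, at the price of invoking the SDE representation directly (the paper's proof would apply equally to a regular diffusion specified only through its scale and speed, and uses only the boundary classification fact $\varphi(0^+)=+\infty$). Two small repairs to your localization: under Assumption A, $\sigma$ is merely measurable, so $e^{-r_0 s}\sigma(X_s)$ need not be bounded on $[0,t\wedge\tau_n]$; instead localize the local martingale directly, e.g.\ with $\rho_m=\inf\{t\ge 0:\int_0^t\sigma^2(X_s)\,ds\ge m\}$, and pass $m\to\infty$ by Fatou and monotone convergence exactly as you do in $n$. Also, you need not assume non-explosion separately: your own stopped inequality gives $x_0\ge E_{x_0}[e^{-r_0(t\wedge\tau_n)}X_{t\wedge\tau_n}]\ge n e^{-r_0 t}P_{x_0}(\tau_n\le t,\ X_{\tau_n}=n)$, so $P_{x_0}(\tau_n\le t,\ X_{\tau_n}=n)\le x_0 e^{r_0 t}/n\to 0$, which together with $0$ being natural yields $\tau_n\to\infty$ almost surely under the contradiction hypothesis.
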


\begin{proof}
Let's first establish the inequality: $\liminf\limits_{x\to 0^+} \mu(x)> -r_0 q$, for any $q>0$. Assuming on the contrary that $\liminf\limits_{x\to 0^+} \mu(x)\leq -r_0 q$ for some $q>0$. In particular, we introduce an auxiliary function $h(x)=(q-x)^+$, it then follows that $$(\mathcal{L}-r_0)h(x)=\theta_{r_0}(x)-r_0 q\geq \limsup_{x\to 0^+} \theta_{r_0}(x)-r_0 q=-\liminf_{x\to 0^+} \mu(x)-r_0 q\geq 0$$ for all $x\in(0,q)$. This implies that $H(y)$ given in \eqref{H} is convex on $(0,F(q))$ since $(\mathcal{L}-r_0)h(x)$ preserves the sign of $H''$ by Proposition \ref{Prop-H}. On the other hand, 
\begin{align*}
H'(F(q)^-) \ &\triangleq \  \lim_{y\rightarrow F(q)^-} H'(y)\\
&= \ \lim_{x\rightarrow q^-} \left(\dfrac{h}{\varphi}\right)'(x)/F'(x)\\
&= \ \lim_{x\rightarrow q^-} \dfrac{-\varphi-(q-x)\varphi'}{\psi'\varphi-\psi\varphi'}(x)\\
&= \ \dfrac{-\varphi}{\psi'\varphi-\psi\varphi'}(q)<0.
\end{align*}
 Therefore, $H'(x)<0$ for  $x\in (0,F(q))$, and hence $H(0+)>H(F(q))=0$. Noting that $0$ is a natural boundary or an entrance-not-exit point implies that $\varphi(0)=+\infty$, which ensures $H(0+)=0$. A contradiction occurs. Therefore, we have $\liminf\limits_{x\to 0^+} \mu(x)>-r_0 q$, for any $q>0$, and consequently $\liminf\limits_{x\to 0^+} \mu(x)\geq 0$. 
\end{proof}

In the following lemma, the concavity/convexity of $H(\cdot)$ is described based on the sign of $\mu(q^-)$. Note that since $\theta(x)=rx-\mu(x)$ is non-decreasing, the left limit $\mu(q^-)$ is well-defined.

\begin{lemma}\label{second-order prop-H}
The function $H(\cdot)$ given by \eqref{H} belongs to $C^2(0,F(q))$ and it possesses the following  properties:
\begin{enumerate}
\item[(i)] If  $\mu(q^-)\geq 0$, $H(\cdot)$ is concave on $(0,F(q))$.
\item[(ii)] If $\mu(q^-)<0$, then there exists a unique point $x_{\theta} \in (0,q)$ such that $H(\cdot)$ is concave on $(0,F(x_\theta))$ and strictly convex on $(F(x_\theta), F(q))$.
\end{enumerate}
\end{lemma}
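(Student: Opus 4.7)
The plan is to apply Proposition \ref{Prop-H} directly to the reward $h(x)=(q-x)^+$ on the open interval $(0,q)$, where $h$ is smooth with $h'(x)=-1$ and $h''(x)=0$. A direct computation then yields
$$(\mathcal{L}-r)h(x) \;=\; \tfrac{1}{2}\sigma^2(x)\cdot 0 + \mu(x)\cdot(-1) - r(q-x) \;=\; \theta(x)-rq$$
on $(0,q)$, using the definition $\theta(x)=rx-\mu(x)$. By formula \eqref{H''}, the prefactor $2/[\sigma^2(x)\varphi(x)(F'(x))^2]$ is strictly positive, so on $F((0,q))=(0,F(q))$ the sign of $H''$ coincides with that of $\theta(x)-rq$. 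Concavity/convexity of $H$ on $(0,F(q))$ therefore reduces to tracking the sign of the monotone function $\theta(x)-rq$.

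For part (i), suppose $\mu(q^-)\ge 0$. Then $\theta(q^-)=rq-\mu(q^-)\le rq$, and since $\theta$ is non-decreasing on $\mathcal I$, we have $\theta(x)\le\theta(q^-)\le rq$ for every $x\in(0,q)$. Hence $H''\le 0$ on $(0,F(q))$, giving concavity.

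For part (ii), suppose $\mu(q^-)<0$. Then $\theta(q^-)=rq-\mu(q^-)>rq$, so $\theta(x)-rq>0$ for $x$ sufficiently close to $q$. At the other end, Lemma \ref{lem-mu} applied with $r_0=r$ yields $\liminf_{x\to 0^+}\mu(x)\ge 0$, so $\limsup_{x\to 0^+}\theta(x)\le 0<rq$, and since $\theta$ is monotone this forces $\theta(0^+)\le 0<rq$. I would then set
$$x_\theta \;:=\; \inf\{x\in(0,q):\theta(x)>rq\},$$
which lies in $(0,q)$ and is defined uniquely by this formula. Monotonicity of $\theta$ gives $\theta(x)\le rq$ for all $x<x_\theta$ (so $H''\le 0$, yielding concavity on $(0,F(x_\theta))$) and $\theta(x)>rq$ for all $x>x_\theta$ (so $H''>0$, yielding strict convexity on $(F(x_\theta),F(q))$).

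The only genuinely non-routine input is the behaviour of $\mu$ near $0$, which is exactly what Lemma \ref{lem-mu} supplies; without it, one could not rule out $\theta(0^+)>rq$ and the convex region could extend all the way down to $0$. Everything else is sign-chasing through formula \eqref{H''} together with the monotonicity of $\theta$ built into Assumption A.
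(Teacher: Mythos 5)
Your proposal is correct and takes essentially the same route as the paper's proof: both reduce the statement to the sign of $(\mathcal{L}-r)h(x)=\theta(x)-rq$ via Proposition \ref{Prop-H}, handle case (i) by the monotonicity of $\theta$, and in case (ii) invoke Lemma \ref{lem-mu} (with $r_0=r$) to control $\theta$ near $0$ before locating the threshold by monotonicity. Your $x_\theta=\inf\{x\in(0,q):\theta(x)>rq\}$ coincides with the paper's $x_\theta=\sup\{a\in(0,q):\theta(x)-rq\le 0 \text{ on } (0,a)\}$, so the arguments are identical in substance.
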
 

\begin{proof}
Note that for $x\in (0,q)$, $$(\mathcal{L}-r)h(x)=\frac{\sigma^2(x)}{2}h''(x)+\mu(x)h'(x)-rh(x)=\theta(x)-rq.$$ 

In case $(i)$, since $\theta(\cdot)$ is non-decreasing, it follows that $(\mathcal{L}-r)h(x)\leq \theta(q^-)-rq=-\mu(q^-)\leq 0$ for all $x\in (0,q)$. Hence $H(\cdot)$ is concave on $(0, F(q))$ by Proposition \ref{Prop-H}.

In case $(ii)$, since $\theta(q^-)-rq=-\mu(q^-)>0$ and $\limsup\limits_{x\to 0^+} \theta(x)-rq=-\liminf\limits_{x\to 0^+} \mu(x)-rq\le-rq<0$ (by lemma \ref{lem-mu}), noting that $\theta(\cdot)$ is non-decreasing, there must exist a unique point $x_\theta\in(0,q)$ such that $\theta(x)-rq\leq 0$ on $(0,x_\theta)$ while $\theta(x)-rq> 0$ on $(x_\theta,q)$. Indeed, one may  take $x_\theta=\sup \{ a\in (0,q):\theta(x)-rq\le 0  \text{ on } (0,a) \}$.
\end{proof}


Furthermore, $H(\cdot)$ in \eqref{H} also possesses the following property.

 \begin{lemma}\label{monotone prop-H}
There exists a unique $y_h \in (0,F(q))$ such that $H(\cdot)$ is strictly increasing on $(0,y_h)$ and strictly decreasing on $(y_h, F(q))$ with $H'(y_h)=0$.
\end{lemma}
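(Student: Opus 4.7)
The plan is to translate the monotonicity question for $H$ on $(0,F(q))$ into a sign-and-zero question for an auxiliary function on $(0,q)$ via Proposition \ref{Prop-H}, and then combine that with the concavity/convexity structure of $H$ furnished by Lemma \ref{second-order prop-H}.

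First I would apply Proposition \ref{Prop-H} with $h(x)=q-x$ on $(0,q)$ and $h'(x)=-1$ to obtain
\[
H'(F(x)) \;=\; \frac{W(h,\varphi)(x)}{W(\psi,\varphi)} \;=\; -\,\frac{\Phi(x)}{W(\psi,\varphi)\,S'(x)},\qquad \Phi(x)\triangleq \varphi(x)+(q-x)\varphi'(x),
\]
so that, since $W(\psi,\varphi)>0$ and $S'>0$, the sign of $H'\circ F$ on $(0,q)$ is opposite to the sign of $\Phi$. Differentiating yields $\Phi'(x)=(q-x)\varphi''(x)\ge 0$, thanks to the standing convexity assumption on $\varphi$ in Section 4, so $\Phi$ is non-decreasing. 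At the right endpoint $\Phi(q^-)=\varphi(q)>0$, recovering $H'(F(q)^-)<0$ as already computed inside the proof of Lemma \ref{lem-mu}. Near $0$, the boundary behavior $H(0^+)=0=H(F(q)^-)$ with $H>0$ on the interior forces $H$ to rise initially; combined with the concavity of $H$ near $0$ from Lemma \ref{second-order prop-H} (on all of $(0,F(q))$ when $\mu(q^-)\ge 0$, on $(0,F(x_\theta))$ when $\mu(q^-)<0$), this yields $H'>0$, and hence $\Phi<0$, on some initial interval. The intermediate value theorem applied to the non-decreasing $\Phi$ then produces $x_h\in(0,q)$ with $\Phi(x_h)=0$, and I set $y_h\triangleq F(x_h)$.

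To upgrade this to strict monotonicity on each side of $y_h$ together with uniqueness, I would split into the two cases of Lemma \ref{second-order prop-H}. In case $(i)$ where $\mu(q^-)\ge 0$, $H$ is concave on all of $(0,F(q))$ with $H(0^+)=H(F(q)^-)=0$ and $H>0$ in between, and the standard chord argument for concave functions produces a unique maximizer $y_h$ about which $H$ is strictly increasing to the left and strictly decreasing to the right. In case $(ii)$ the strict convexity of $H$ on $(F(x_\theta),F(q))$ together with $H'(F(q)^-)<0$ forces $H'<0$ throughout $(F(x_\theta),F(q))$, so $y_h\le F(x_\theta)$; on the concave piece $(0,F(x_\theta))$ the same chord argument applies with boundary data $H(0^+)=0$, $H(F(x_\theta))>0$ and $H'(F(x_\theta))<0$.

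The main obstacle I expect is excluding a plateau of $H$ at its maximum, which would destroy uniqueness of $y_h$. On the convex piece in case $(ii)$ strict convexity rules this out immediately. On the concave pieces, a plateau would require $\Phi\equiv 0$ on a subinterval, hence by $\Phi'=(q-x)\varphi''$ the identity $\varphi''\equiv 0$ there; the ODE $(\mathcal{L}-r)\varphi=0$ then reduces this to the rigid relation $\mu(x)\varphi'(x)=r\varphi(x)$ on that subinterval, which is incompatible with the generic parameter regimes admitted by Assumption A and so pins down $y_h$ as a single point.
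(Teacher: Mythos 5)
Your reduction is essentially the paper's: its proof observes that $H'$ is a positive multiple of $Q(x)\triangleq \varphi(x)h'(x)-h(x)\varphi'(x)$, which is exactly $-\Phi(x)$ in your notation, and your existence argument (sign of $\Phi$ at the two ends plus the intermediate value theorem) matches the paper's in substance. The divergence, and the genuine gap, is at uniqueness and strict monotonicity. The paper settles this in one stroke: $Q'(x)=-h(x)\varphi''(x)<0$, so $Q$ is continuous and \emph{strictly} decreasing on $(0,q)$, hence has a single zero, which simultaneously gives strict increase of $H$ before $y_h$ and strict decrease after it, with no case analysis through Lemma \ref{second-order prop-H} at all. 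You, working only with $\Phi'(x)=(q-x)\varphi''(x)\ge 0$, correctly isolate the plateau of $H$ at its maximum as the obstruction, but your exclusion of it does not hold up. If $\Phi\equiv 0$ on a subinterval $(a,b)\subset(0,q)$, then $\varphi'/\varphi=-1/(q-x)$ there, so $\varphi(x)=C(q-x)$ on $(a,b)$, and the equation $(\mathcal{L}-r)\varphi=0$ forces $\mu(x)=r\varphi(x)/\varphi'(x)=-r(q-x)$, hence $\theta(x)=rx-\mu(x)\equiv rq$ on $(a,b)$. A constant stretch of $\theta$ is perfectly consistent with Assumption A(ii) (non-decreasing), with $\sigma>0$, with regularity and $0$ being natural, and with the conclusion $\liminf_{x\to 0^+}\mu(x)\ge 0$ of Lemma \ref{lem-mu}, since here $\mu$ is negative only on an interior interval. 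So your closing claim that the rigid relation $\mu\varphi'=r\varphi$ on a subinterval is ``incompatible with the generic parameter regimes admitted by Assumption A'' is an appeal to genericity, which is not among the paper's hypotheses; the key point is asserted rather than proved.

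To close the gap you need exactly what the paper's inequality $Q'<0$ encodes, namely strict positivity of $\varphi''$ on $(0,q)$ (or at least the impossibility of $\varphi$ coinciding with a multiple of $q-x$ on an interval). Since $\varphi''=\frac{2}{\sigma^2}\left(r\varphi-\mu\varphi'\right)$, this is automatic wherever $\mu\ge 0$; the problematic configuration requires $\mu(x)=-r(q-x)<0$ together with $\varphi$ exactly linear on an interval, and ruling it out must come from properties of $\varphi$ as the decreasing fundamental solution, not from Assumption A alone. (To be fair, the paper's own line $Q'(x)=-h(x)\varphi''(x)<0$ implicitly uses strict convexity of $\varphi$ while the standing assumption is only convexity, so the difficulty you ran into is real; but the paper at least reduces everything to that single strict inequality, whereas the genericity appeal is not a valid substitute.) A final remark: your case analysis via Lemma \ref{second-order prop-H} --- the chord argument on the concave piece, and strict convexity forcing $H'<0$ on $(F(x_\theta),F(q))$ via $H'<H'(F(q)^-)<0$ --- is correct as far as it goes, but it becomes unnecessary once strict monotonicity of $\Phi$ is in hand, which is the economy of the paper's argument.
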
 

\begin{proof}
In view of \eqref{W} and \eqref{H'},  $H'(x)$  in \eqref{H} is a positive multiple of the function $Q(x) \triangleq \varphi(x)h'(x)-h(x)\varphi'(x)$. Note that $Q$ is continuous and strictly decreasing on $(0,q)$ since $Q'(x)=-h(x)\varphi''(x)<0$. Noting that $H(\cdot)$ is continuous and strictly positive on $(0,F(q))$ with $H(0)=H(F(q))=0$, we have $Q(x)>0$ nearby $0$  and $Q(x)<0$ nearby $F(q)$. Therefore, the equation $Q(x)=0$ has a unique solution denoted by $y_h$ in  $(0, F(q))$. Thus, $H(\cdot)$ is strictly increasing on $(0,y_h)$ and strictly decreasing on $(y_h, F(q))$ with $H'(y_h)=0$.
\end{proof}

Define $$y_H \triangleq \inf \{ y \in (0,F(d)): H''(y)\geq 0 \}.$$ The $H''(y)\geq 0$ for all $y\in (y_H,F(q))$ by Lemma \ref{second-order prop-H}. Furthermore, we  have $y_h<y_H$. Indeed, if $y_H\leq y_h$, by Lemma \ref{monotone prop-H} , $H'(y_H)\ge 0$ and hence $H'(y)\ge 0$ for all $y\in (y_H,F(q))\supset (y_h,F(q))$, which contradicts with Lemma \ref{monotone prop-H}.

 On the other hand, by the construction of $y_H$ and $y_\theta \triangleq F(x_\theta)$, it is apparent that $y_H\leq y_\theta\le F(q)$. Note that $H''(y)=0$ for $y\in (y_H, y_\theta)$, and hence the graph of $H(\cdot)$ on $y\in (y_H, y_\theta)$ is a line segment.

\begin{lemma}\label{lemma-TH=XH}
Define $$ T_H \triangleq \Big\{(y,v)\in \mathbb R^2: y>0, ~ v= L_z(y)\triangleq H'(z)(y-z)+H(z) \mbox{ for some } z\in [y_h,y_H]\Big\},$$
 where we use the convention that $H'(y_H)$ means $H'({y_H}^-)$.  Then $T_H=X_H$, where
 $$X_H \triangleq \Big\{ (y,v)\in \mathbb{R}^2:~ y>0,~ v\geq \tilde{H}(y)\Big\}$$ with $\tilde{H}$ defined as
\begin{equation*}
\tilde{H}(y) \triangleq
\begin{cases}
H(y_h), & \mbox{if} \ 0< y \leq y_h \\
H(y), & \mbox{if} \ y_h< y < y_H \\
H'({y_H}^-)(y-y_H)+ H(y_H), & \mbox{if} \ y \geq y_H.
\end{cases}
\end{equation*} 
\end{lemma}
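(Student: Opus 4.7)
The plan is to mirror the proof of Lemma \ref{lemma-T=X}, with the roles of $G$, $0$, and $F(l)$ played respectively by $H$, $y_h$, and $y_H$. The structural inputs are: $H$ is concave on $[y_h, y_H] \subset (0, y_\theta)$; $H'(y_h) = 0$ by Lemma \ref{monotone prop-H}; and $H'$ is non-increasing on this interval so $H'(z) \le 0$ for $z \in [y_h, y_H]$. This is exactly the configuration that makes the tangent-line parametrization of the upper half-plane above a concave curve work, with $y_h$ playing the role of a horizontal-tangent endpoint and $y_H$ the affine-extension endpoint.

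First I would show $T_H \subset X_H$. Fix $z \in [y_h, y_H]$. By concavity of $H$ on $[y_h, y_H]$, $L_z(y) \ge H(y) = \tilde H(y)$ for $y \in (y_h, y_H)$. For $y \le y_h$: the line $L_z$ has non-positive slope $H'(z)$, and $L_z(y_h) \ge H(y_h)$ by concavity applied at $y_h$, so $L_z(y) \ge L_z(y_h) \ge H(y_h) = \tilde H(y)$. For $y \ge y_H$: concavity gives $L_z(y_H) \ge H(y_H) = L_{y_H}(y_H)$, while the slope comparison $H'(z) \ge H'(y_H^-)$ (again from concavity) implies that $L_z(y) - L_{y_H}(y)$ is non-decreasing in $y$ and non-negative at $y = y_H$, yielding $L_z(y) \ge L_{y_H}(y) = \tilde H(y)$.

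To prove the reverse inclusion $X_H \subset T_H$, fix $(y, v) \in X_H$ and consider the continuous function $F_{y,v}(z) \triangleq v - H(z) - H'(z)(y - z)$ for $z \in [y_h, y_H]$, so that $F_{y,v}(z) = 0$ precisely when the tangent to $H$ at $z$ passes through $(y, v)$. At the endpoints, $F_{y,v}(y_h) = v - H(y_h)$ (since $H'(y_h) = 0$) and $F_{y,v}(y_H^-) = v - L_{y_H}(y)$. Using the piecewise definition of $\tilde H$, together with the monotonicity of $z \mapsto L_z(y)$ whose derivative is $\frac{d}{dz} L_z(y) = H''(z)(y - z)$ and has a fixed sign on each of the three regions $y \le y_h$, $y_h \le y \le y_H$, $y \ge y_H$, I would verify that the two endpoint values straddle zero whenever $v \ge \tilde H(y)$. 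The intermediate value theorem then supplies a $z \in [y_h, y_H]$ with $F_{y,v}(z) = 0$, so $(y, v) \in T_H$. Uniqueness of the representing tangent when $y \notin [y_h, y_H]$ follows from the strict monotonicity of $H'$ on $[y_h, y_H]$ by an argument parallel to Remark \ref{remark-T=X}.

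The main obstacle is the case analysis in this intermediate value step. In Lemma \ref{lemma-T=X} the divergence $G'(0^+) = +\infty$ supplies an automatic $-\infty$ endpoint for $F_{y,v}$, so only one endpoint sign needs to be checked. Here both endpoints of $[y_h, y_H]$ correspond to finite slopes of $L_z$, so the bookkeeping requires splitting into the three $y$-regions cut out by $\tilde H$ and using the sign of $H''(z)(y - z)$ in each case to conclude that $F_{y,v}(y_h)$ and $F_{y,v}(y_H^-)$ have opposite signs (or one vanishes); the delicate point is that the concavity of $H$ on $[y_h, y_H]$ and the equality $H'(y_h) = 0$ together force this to happen exactly on the set $v \ge \tilde H(y)$.
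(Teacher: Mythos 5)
Your inclusion $T_H \subset X_H$ is fine, and you correctly identified the structural difference from Lemma \ref{lemma-T=X}: there the blow-up $G'(0^+)=+\infty$ hands you a $-\infty$ endpoint for $F_{y,v}$, whereas here both tangency parameters $y_h$ and $y_H$ give finite slopes. But your resolution of that obstacle fails: the endpoint values do \emph{not} straddle zero on all of $X_H$, and in fact the reverse inclusion $X_H\subset T_H$ is false, so no amount of case bookkeeping can rescue the intermediate value step. Every tangent line $L_z$ with $z\in[y_h,y_H]$ has slope $H'(z)\in[H'(y_H^-),0]$ and value $L_z(z)=H(z)\le H(y_h)$ at its point of tangency; since $\frac{d}{dz}L_z(y)=H''(z)(y-z)$ changes sign only at $z=y$ and $H''<0$ on $(0,y_H)$, for each fixed $y$ the attainable ordinates $\{L_z(y):z\in[y_h,y_H]\}$ form the \emph{compact} interval $[\tilde H(y),\,\max\{H(y_h),L_{y_H}(y)\}]$, not the full ray $[\tilde H(y),+\infty)$. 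Concretely, take $y>y_H$ and $v>H(y_h)$: then $(y,v)\in X_H$ because $\tilde H(y)=L_{y_H}(y)\le H(y_H)\le H(y_h)<v$, yet each $L_z$ is non-increasing with $L_z(z)=H(z)\le H(y_h)$, so $L_z(y)\le H(y_h)<v$ for every admissible $z$ and $(y,v)\notin T_H$; correspondingly both of your endpoint values $F_{y,v}(y_h)=v-H(y_h)$ and $F_{y,v}(y_H^-)=v-L_{y_H}(y)$ are strictly positive, with no sign change in between. The same failure occurs for $0<y<y_h$ with $v>L_{y_H}(y)$. Your closing assertion — that concavity and $H'(y_h)=0$ force the straddle \emph{exactly} on the set $v\ge\tilde H(y)$ — is the precise point at which the argument breaks.

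In fairness, the defect sits in the statement itself: the paper proves this lemma only by the pointer ``similar to Lemma \ref{lemma-T=X}'', and the unbounded-slope mechanism that makes that earlier lemma true is absent here, so Lemma \ref{lemma-TH=XH} as printed is false for large $v$ as well. Your scheme does prove the corrected identity $T_H=\{(y,v):y>0,\ \tilde H(y)\le v\le\max\{H(y_h),L_{y_H}(y)\}\}$ (equivalently, it suffices to intersect $X_H$ with $\{v\le H(y_h)\}$, since $L_{y_H}(y)\ge H(y_h)$ for $y\le y_h$), and this weaker statement is all the paper ever uses: the lemma is invoked once, in Lemma \ref{uniqueness z_0}, for the single point $(F(d),0)$, where $v=0<H(y_h)$ yields $F_{y,v}(y_h)=-H(y_h)<0$ while the first step of that proof supplies $F_{y,v}(y_H^-)=-L_{y_H}(F(d))\ge0$, so the intermediate value theorem applies on $[y_h,y_H]$ exactly as you propose. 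Repair the target statement accordingly (cap $v$, or verify membership only for $(F(d),0)$), after which your outline — including the correct $T_H\subset X_H$ half and the uniqueness remark — goes through.
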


\begin{proof}
The proof is similar to that of  Lemma \ref{lemma-T=X}.
\end{proof}

\begin{lemma} \label{uniqueness z_0}
The following equation has a unique solution $z_0 \in [y_h,y_H]$:
\begin{equation}\label{equation for unique z_0}
H'(z_0)=\dfrac{H(F(d))-H(z_0)}{F(d)-z_0} \equiv \dfrac{-H(z_0)}{F(d)-z_0}\, .
\end{equation}
\end{lemma}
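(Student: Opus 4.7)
The plan is to recast the equation as $\Phi(z_0)=0$ for
\[
\Phi(z) \triangleq H'(z)\bigl(F(d)-z\bigr)+H(z),
\]
and then locate its unique zero in $[y_h,y_H]$ via the intermediate value theorem together with strict monotonicity of $\Phi$. The starting observation is $H(F(d))=h(d)/\varphi(d)=0$ (since $d>q$ gives $h(d)=(q-d)^+=0$), which accounts for the ``$\equiv$'' in the statement and turns the displayed identity into $\Phi(z_0)=0$.

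For existence I would evaluate $\Phi$ at the two endpoints of $[y_h,y_H]$. At $z=y_h$, Lemma \ref{monotone prop-H} gives $H'(y_h)=0$, so $\Phi(y_h)=H(y_h)>0$, where positivity holds because $H>0$ on $(0,F(q))$ and $y_h$ is its maximum. At $z=y_H$ I split according to the cases of Lemma \ref{second-order prop-H}. In the case $\mu(q^-)\ge 0$ one has $y_H=F(q)$, so $H(y_H)=0$, and $H'(y_H^-)<0$ by Lemma \ref{monotone prop-H} (since $y_H>y_h$), giving $\Phi(y_H)<0$. In the case $\mu(q^-)<0$ one has $y_H=F(x_\theta)<F(q)$ and $H$ is strictly convex on $(y_H,F(q))$ with $H(F(q))=0$; the tangent to $H$ at $y_H$ then lies below $H$ on $[y_H,F(q)]$, so $H(y_H)+H'(y_H)\bigl(F(q)-y_H\bigr)\le 0$, and combining this with $H'(y_H)<0$ and $F(d)>F(q)$ yields $\Phi(y_H)<0$ again.

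For strict monotonicity I differentiate to obtain $\Phi'(z)=H''(z)(F(d)-z)$. The definition $y_H=\inf\{y\in (0,F(d)):H''(y)\ge 0\}$ forces $H''(z)<0$ for every $z<y_H$, since any such $z$ with $H''(z)\ge 0$ would itself belong to the set whose infimum is $y_H$. Consequently $\Phi'(z)<0$ throughout $(y_h,y_H)$, so $\Phi$ is strictly decreasing on $[y_h,y_H]$; together with continuity and the strict sign change $\Phi(y_h)>0>\Phi(y_H)$, the intermediate value theorem produces a unique zero $z_0\in (y_h,y_H)$. The most delicate step in the whole argument is the treatment of $\Phi(y_H)$ when $\mu(q^-)<0$: one must cross the interval $(y_H,F(q))$ on which $H$ is convex (not concave) to extract the inequality $H(y_H)+H'(y_H)(F(q)-y_H)\le 0$ from $H(F(q))=0$, and then extend it past $F(q)$ to $F(d)$ using the negativity of $H'(y_H)$. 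Once that estimate is in hand the remainder is a direct application of the intermediate value theorem.
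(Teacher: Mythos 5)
Your proof is correct in substance but takes a genuinely different route from the paper. The paper argues geometrically: it first shows that the point $(F(d),0)$ lies above the tangent line $L_{y_H}$ (hence above $\tilde{H}$), then invokes Lemma \ref{lemma-TH=XH} ($T_H=X_H$) to produce a point $z_0\in[y_h,y_H]$ whose tangent line passes through $(F(d),0)$, which is exactly \eqref{equation for unique z_0}; uniqueness comes from the strict concavity of $H$ on $(y_h,y_H)$. You instead apply the intermediate value theorem directly to $\Phi(z)=H'(z)\bigl(F(d)-z\bigr)+H(z)$ --- which is nothing but the value $L_z(F(d))$ of the tangent line at $z$ --- and get uniqueness from $\Phi'(z)=H''(z)\bigl(F(d)-z\bigr)<0$ on $(y_h,y_H)$, the sign of $H''$ there being forced by the infimum definition of $y_H$. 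This is more elementary: it bypasses the tangent-set lemma entirely, and your monotonicity of $\Phi$ is precisely the analytic form of the paper's strict-concavity uniqueness step, while your endpoint signs $\Phi(y_h)=H(y_h)>0$ and $\Phi(y_H)<0$ encode the paper's statement that $(F(d),0)$ lies above $L_{y_H}$.

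One inaccuracy needs repair in your evaluation of $\Phi(y_H)$: the identifications $y_H=F(q)$ in case (i) and $y_H=F(x_\theta)$ in case (ii) of Lemma \ref{second-order prop-H} are not valid in general. Since $\theta$ is only non-decreasing, $(\mathcal{L}-r)h(x)=\theta(x)-rq$ may vanish on a whole interval to the left of $q$ (resp.\ of $x_\theta$), so $H''$ can be identically zero on $(y_H,F(q))$ (resp.\ on $(y_H,y_\theta)$) with $y_H$ strictly smaller than the point you name; the paper explicitly treats the subcase $y_H<F(q)$ in case (i), and records in case (ii) that the graph of $H$ on $(y_H,y_\theta)$ is a line segment. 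The slip is harmless, and the fix is your own case-(ii) computation: on such a degenerate interval $H$ coincides with its tangent at $y_H$, so $H(y_H)+H'(y_H^-)\bigl(F(q)-y_H\bigr)\le 0$ still holds (with equality in the purely linear subcase, and via convexity on $(y_\theta,F(q))$ otherwise), whence $\Phi(y_H)\le H'(y_H^-)\bigl(F(d)-F(q)\bigr)<0$ exactly as you computed, using $H'(y_H^-)<0$ from Lemma \ref{monotone prop-H} and $y_h<y_H$. With this adjustment (and reading $H'(y_H)$ as $H'(y_H^-)$ when $y_H=F(q)$, per the paper's convention), your argument is complete.
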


\begin{proof} First we show that the point $(F(d),0)$ is above the line  $L_{y_H}(y) \triangleq H'({y_H}^-)(y-y_H)+ H(y_H)$.  Note that $H'(y_H^-)< 0$ by Lemma \ref{monotone prop-H} and the fact $y_h<y_H$.

For case $(i)$ in Lemma \ref{second-order prop-H}, if $y_H=F(q)$, $L_{y_H}(F(d))=H'(y_H^-)(F(d)-F(q))< 0$; if $y_H<F(q)$, the concavity of $H(\cdot)$ on $(0,F(q))$ now implies that $H''(y)=0$ for all $y \in (y_H,F(q))$. That is, $H(\cdot)$ and $L_{y_H}$ coincide on $(y_H,F(q))$, implying that $(F(d),0)$ is above the line $L_{y_H}$.

 For case $(ii)$ in Lemma \ref{second-order prop-H},  note that if $y_\theta>y_H$, the line $L_{y_H}$ coincides with the line $L_{y_\theta}(y)=H'(y_\theta^+)(y-y_\theta)+H(y_\theta)$, because the graph of $H(\cdot)$ on $y\in (y_H, y_\theta)$ is a line segment.  The strict convexity of $H(\cdot)$ on $(y_\theta, F(q))$ now implies $(F(q),0)$ is above $L_{y_H}$ and so is  $(F(d),0)$.

  Therefore,  $(F(d),0) \in T_H$ by Lemma \ref{lemma-TH=XH}, and thus there exists some $z_0 \in [y_h,y_H]$ such that the point $(F(d),0)$ is on the tangent line $L_{z_0}$. This implies the existence of \eqref{equation for unique z_0}. Finally, by the definition of $y_H$,  $H(\cdot)$ is strictly concave on $(y_h,y_H)$, which implies the uniqueness of the solution.
\end{proof}

The following theorem is the main result in this section.

\begin{theorem}\label{value function for barrier put option with American feature}
The solution to  (\ref{vf}) is given by
\begin{equation}\label{V(x)}
V(x)=\varphi(x)W(F(x))=
\begin{cases}
q-x, & \mbox{if} \  0<x \leq x_0, \\
(q-x_0) \cdot \dfrac{\varphi(x)}{\varphi(x_0)} \cdot \dfrac{F(d)-F(x)}{F(d)-F(x_0)}, & \mbox{if} \  x_0 < x < d,
\end{cases}
\end{equation}
where $x_0$ is determined by
\begin{equation}\label{x_0 equation}
\dfrac{(h'\psi-h\psi')(x_0)}{F(d)}=(h'\varphi-h\varphi')(x_0).
\end{equation}
The optimal stopping time is given by
\begin{align}\label{optimal stopping time}
\tau^\star =\inf \lbrace t\geq 0: X_t \leq x_0 \rbrace.
\end{align}
\end{theorem}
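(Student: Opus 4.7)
The strategy is to invoke the characterization from \cite[Proposition 5.5]{DK03} for an optimal stopping problem with an absorbing right boundary; in the present setup it yields the representation $V(x)=\varphi(x)W(F(x))$ for $x\in(0,d)$, where $W:[0,F(d)]\to\mathbb R$ is the smallest non-negative concave majorant of $H$ on $[0,F(d)]$ with the boundary value $H(F(d))=h(d)/\varphi(d)=0$. The problem is therefore reduced to constructing this $W$ explicitly and then translating the answer back to the $x$-variable.

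Take $z_0\in[y_h,y_H]$ to be the unique point supplied by Lemma \ref{uniqueness z_0}, and let $L_{z_0}(y)\triangleq H(z_0)+H'(z_0)(y-z_0)$, which by the defining identity satisfies $L_{z_0}(F(d))=0$. I will set
\[
W(y)=
\begin{cases}
H(y), & 0\le y\le z_0,\\
L_{z_0}(y), & z_0\le y\le F(d),
\end{cases}
\]
and verify the four defining properties in order. \emph{Concavity} follows because $H''\le 0$ on $[0,z_0]\subset[0,y_H]$ by Lemma \ref{second-order prop-H} and the definition of $y_H$, while $L_{z_0}$ is linear and the tangency condition provides a $C^1$ match at $z_0$. \emph{Non-negativity} is clear, since $H\ge 0$ on $[0,F(q)]\supset[0,z_0]$, $L_{z_0}(z_0)=H(z_0)\ge 0$, and $L_{z_0}(F(d))=0$. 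For \emph{domination}, the nontrivial part is $L_{z_0}\ge H$ on $[z_0,F(d)]$: on the concave piece of $H$ to the right of $z_0$ the tangent at $z_0$ lies above $H$ by concavity; on $[F(q),F(d)]$ one has $H\equiv 0$ while $L_{z_0}$ decreases affinely from a non-negative value to $0$; and on the remaining convex piece $[y_\theta,F(q)]$ in case (ii) of Lemma \ref{second-order prop-H}, the line is above $H$ at both endpoints and hence, being linear, dominates the chord, which in turn dominates the convex function $H$. \emph{Minimality}: any non-negative concave majorant $\bar W$ of $H$ satisfies $\bar W\ge H=W$ on $[0,z_0]$ and $\bar W(F(d))\ge 0=W(F(d))$; concavity on $[z_0,F(d)]$ then places $\bar W$ above the chord of its own endpoints, which itself dominates $L_{z_0}=W$.

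Setting $x_0\triangleq F^{-1}(z_0)$, the formula (\ref{V(x)}) is obtained by direct algebra: on $(0,x_0]$ we have $V(x)=\varphi(x)H(F(x))=h(x)=q-x$, and on $(x_0,d)$ the substitution $F(x)=\psi(x)/\varphi(x)$ into $\varphi(x)L_{z_0}(F(x))$ simplifies to the product form stated in (\ref{V(x)}). To derive the equation for $x_0$, I will plug the expressions
\[
H(z_0)=\frac{h(x_0)}{\varphi(x_0)},\quad z_0=\frac{\psi(x_0)}{\varphi(x_0)},\quad H'(z_0)=\frac{(h'\varphi-h\varphi')(x_0)}{(\psi'\varphi-\psi\varphi')(x_0)},
\]
obtained via Proposition \ref{Prop-H}, into the tangency identity $H(z_0)+H'(z_0)(F(d)-z_0)=0$, clear denominators, and observe that the $\psi\varphi'$ and $\psi'\varphi$ cross-terms cancel to leave $(h'\psi-h\psi')(x_0)=F(d)(h'\varphi-h\varphi')(x_0)$, which is precisely (\ref{x_0 equation}). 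The optimal stopping time (\ref{optimal stopping time}) is then read off from the stopping region $\{y:W(y)=H(y)\}=[0,z_0]$, which in $x$-coordinates is $(0,x_0]$.

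The main obstacle is the domination step on $[y_\theta,F(q)]$ in case (ii) of Lemma \ref{second-order prop-H}, where $H$ is convex and one must rule out the possibility that this convex bump pokes above the tangent line $L_{z_0}$. The clean packaging of this fact is the geometric identity $T_H=X_H$ from Lemma \ref{lemma-TH=XH} together with the uniqueness statement of Lemma \ref{uniqueness z_0}: $L_{z_0}$ is characterized as the unique tangent to $H$ at a point of $[y_h,y_H]$ that passes through $(F(d),0)$, and the ordering of tangent lines from this parameter range forces $L_{z_0}$ to stay above $H$ throughout $[z_0,F(d)]$.
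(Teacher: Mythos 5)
Your proposal is correct and takes essentially the same route as the paper's proof: the unique tangency point $z_0\in[y_h,y_H]$ from Lemma \ref{uniqueness z_0}, the same two-piece majorant $W$ (graph of $H$ up to $z_0$, then the tangent chord $L_{z_0}$ down to $(F(d),0)$), the reduction via \cite[Proposition 5.5]{DK03}, and the same algebra turning the tangency identity into \eqref{x_0 equation} --- indeed your case-by-case verification that $L_{z_0}$ dominates $H$ (in particular across the convex piece on $(y_\theta,F(q))$ in case (ii) of Lemma \ref{second-order prop-H}) and your minimality argument are more detailed than the paper's one-line assertion. The single point you treat as automatic but which genuinely requires justification is the optimality of $\tau^\star$: hitting the stopping region $\{V=h\}$ is not always an optimal rule (the paper's own case (A3) of Theorem \ref{thm3.1} is a counterexample), and the paper secures this step by citing \cite[Proposition 5.7]{DK03}, using that $h$ is continuous and $\lim_{x\to d}h(x)/\psi(x)=0$.
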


\begin{proof}
Firstly, note that Lemma \ref{uniqueness z_0} guarantees the existence of a unique $z_0 \in [y_h, y_H]$ such that 
\begin{equation}\label{equation for z_0}
H'(z_0)=\dfrac{H(F(d))-H(z_0)}{F(d)-z_0}=\dfrac{-H(z_0)}{F(d)-z_0}
\end{equation}
Therefore, the straight line $L_{z_0}: [0,F(d)] \rightarrow \mathbb{R}$,
\begin{align*}
L_{z_0}(y) \triangleq H(z_0)+H'(z_0)(y-z_0), \ \ \ y \in [0,F(d)]
\end{align*}
is tangent to $H$ at $z_0$ and coincides with the chord expanding between $(z_0, H(z_0))$ and $(F(d),0)$ over the graph of $H$. Since $H(z_0)>0$, (\ref{equation for z_0}) implies that $L_{z_0}$ is decreasing.  We also point out that $H(\cdot)$ is concave on $(0,z_0)$ since $z_0\leq y_H\leq y_\theta$.

Define $W(\cdot)$ as follows,
\begin{align}\label{W1}
W(y)=
\begin{cases}
H(y), & \mbox{if} \ y\in (0,z_0], \\
L_{z_0}(y)=H(z_0)\dfrac{F(d)-y}{F(d)-z_0}, & \mbox{if} \ y\in (z_0,F(d)).
\end{cases}
\end{align}
It is clear that $W$ is concave and dominates $H$ on $(0,F(d))$.We claim that $W$ is the smallest non-negative concave majorant of $H$ on $(0,F(d))$. 
Indeed, $W$ is obviously the smallest concave majorant of $H$ on $(0,z_0]$, and the straight line $L_{z_0}$ is the smallest concave curve that connects $(z_0,H(z_0))$ with $(F(d),0)$.

Let $x\triangleq F^{-1}(y)$, $y\in [0,F(d)]$, and $x_0 \triangleq F^{-1}(z_0)\in (0,q)$. Recalling that $H(y)=\left( \dfrac{h}{\varphi} \right) \circ F^{-1}(y)$ for $y>0$ by \eqref{H}, by \cite[Proposition 5.5]{DK03}, we have
 \begin{equation}\label{V1}
V(x)=\varphi(x)W(F(x))=
\begin{cases}
q-x, & \mbox{if} \  x\in(0,x_0 ]\\
(q-x_0).\dfrac{\varphi(x)}{\varphi(x_0)}. \dfrac{F(d)-F(x)}{F(d)-F(x_0)}, & \mbox{if} \  x\in (x_0,d)
\end{cases}
\end{equation}

Let $\Gamma \triangleq \lbrace x\in (0,d): V(x)=h(x)  \rbrace$ be the stopping region, then the stopping time $\tau^\star \triangleq \inf \lbrace t\geq 0: X_t \in \Gamma \rbrace$ is optimal by \cite[Proposition 5.7]{DK03}, noting that $\lim\limits_{x\to b}\frac{h(x)}{\psi(x)}=0$ and $h$ is continuous. Now we identify the stopping region $\Gamma$.

Define \begin{align*}
\tilde{C} \triangleq \lbrace y \in [0,F(d)]: W(y)>H(y) \rbrace
\end{align*}
and then $\tilde{C}=(z_0, F(d)),$ since $L_{z_0}$ dominates $H(\cdot)$ on $(z_0,F(d))$. The continuation region is
\begin{align*}
C \triangleq \lbrace x\in (0,d): V(x)>h(x) \rbrace=F^{-1}(\tilde{C})=F^{-1}((z_0,F(d)))=(x_0,d).
\end{align*}
Therefore, $\Gamma=C^C=(0,x_0],$ and hence 
\begin{equation}\label{OPST}
\tau^\star =\inf \lbrace t\geq 0: X_t \in \Gamma \rbrace =\inf \lbrace t\geq 0: X_t \leq x_0) \rbrace,
\end{equation}
where $x_0$ is characterized by
\begin{equation}\label{x_0}
\begin{cases}
H'(z_0)=\dfrac{-H(z_0)}{F(d)-z_0},\\
x_0=F^{-1}(z_0),
\end{cases}
\end{equation}
which is equivalent to \eqref{x_0 equation}, by 
equations \eqref{H}, \eqref{F}, \eqref{W} and \eqref{H'}.  The proof is concluded.
\end{proof}

Based on Theorem \ref{value function for barrier put option with American feature}, by letting the barrier $d$ go to infinity, we can derive the value function and optimal stopping time for a standard American put option. 

\begin{corollary}\label{value function for standard American put option}
The following optimal stopping problem
\begin{equation*}
V(x) \triangleq \sup_{\tau \geq 0} E_x[e^{-r\tau}(q-X_\tau)^+I_{\lbrace \tau<\infty \rbrace }],
\end{equation*}
admits the solution
\begin{equation}\label{apo}
V(x)=\varphi(x)W(F(x))=
\begin{cases}
q-x, & \mbox{if} \  0<x \leq x_0 \\(q-x_0)
\dfrac{\varphi(x)}{\varphi(x_0)}, & \mbox{if} \  x>x_0
\end{cases}
\end{equation}
where the optimal price level $x_0$ is uniquely found from the following equation:
\begin{equation}\label{x_0 simple equation}
\dfrac{h'(x_0)}{\varphi'(x_0)}=\dfrac{h(x_0)}{\varphi(x_0)}\,,
\end{equation}
that is, $\varphi(x_0)+(q-x_0)\cdot \varphi'(x_0)=0$.
The optimal stopping time is given by
\begin{align*}
\tau^\star =\inf \lbrace t\geq 0: X_t \leq x_0 \rbrace.
\end{align*}
\end{corollary}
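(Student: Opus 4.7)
The plan is to derive the corollary from Theorem \ref{value function for barrier put option with American feature} by letting the barrier $d\to+\infty$. Write $V_d(x)$ for the barrier value function and $x_0^d$ for the critical level supplied by that theorem. First, I would establish $V_d(x)\uparrow V(x)$ pointwise on $(0,+\infty)$. The bound $V_d\leq V$ is immediate from $I_{\tau<\tau_d}\leq I_{\tau<+\infty}$. For the reverse, the continuity of paths of $X$ together with the fact that $+\infty\notin\mathcal{I}$ ensures $\tau_d\uparrow+\infty$ almost surely, hence $I_{\tau<\tau_d}\uparrow I_{\tau<+\infty}$; monotone convergence applied to each fixed $\tau$, followed by optimization over $\tau$, gives $\lim_{d\to+\infty}V_d(x)\geq V(x)$.

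Second, I would pass to the limit in the characterizing equation \eqref{x_0 equation}, which via Proposition \ref{Prop-H} takes the form $H'(F(x_0^d))=-H(F(x_0^d))/(F(d)-F(x_0^d))$. Since $F(d)\to+\infty$ and $H$ is bounded on $(0,F(q))$, the right-hand side tends to $0$. By Lemmas \ref{second-order prop-H} and \ref{monotone prop-H}, $H$ is strictly concave on $(0,y_H)$ with $H'$ strictly decreasing from the value $H'(y_h)=0$ on $[y_h,y_H]$, while the proof of Theorem \ref{value function for barrier put option with American feature} places $F(x_0^d)$ in $[y_h,y_H]$. The forced convergence $H'(F(x_0^d))\to 0$ therefore yields $F(x_0^d)\to y_h$, whence $x_0^d\to x_0:=F^{-1}(y_h)\in(0,q)$.

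Third, I would take the limit in the explicit formula \eqref{V(x)}. For fixed $x>x_0$, choose $d$ large enough that $x_0^d<x<d$; then
\[
V_d(x)=(q-x_0^d)\,\frac{\varphi(x)}{\varphi(x_0^d)}\cdot\frac{F(d)-F(x)}{F(d)-F(x_0^d)}\longrightarrow(q-x_0)\,\frac{\varphi(x)}{\varphi(x_0)},
\]
because the last fraction tends to $1$ (both its numerator and denominator diverge with ratio $1$ since $F(x_0^d)$ stays bounded) while the remaining factors are continuous in $x_0^d$. For $0<x\leq x_0$, we have $V_d(x)=q-x$ for all sufficiently large $d$, giving the first branch of \eqref{apo}. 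Finally, the characterization \eqref{x_0 simple equation} is obtained by translating $H'(y_h)=0$ through Proposition \ref{Prop-H}: the condition $W(h,\varphi)(x_0)=0$ reads $\varphi(x_0)h'(x_0)-h(x_0)\varphi'(x_0)=0$, which for $h(x)=q-x$ reduces to $\varphi(x_0)+(q-x_0)\varphi'(x_0)=0$. Identifying the stopping region as $\{V=h\}=(0,x_0]$ then delivers the optimal stopping rule $\tau^\star=\inf\{t\geq 0:X_t\leq x_0\}$.

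The main technical hurdle is the convergence $x_0^d\to x_0$ in step two, which relies on the strict monotonicity of $H'$ near $y_h$ provided by Lemmas \ref{second-order prop-H} and \ref{monotone prop-H}; once that is in hand, the rest is a routine passage to the limit together with algebraic simplification.
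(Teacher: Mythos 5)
Your proposal is correct and shares the paper's overall skeleton --- deduce the corollary from Theorem \ref{value function for barrier put option with American feature} by letting $d\to+\infty$ --- but you execute both convergence steps differently, and in one place more robustly. For $V_d\to V$, the paper bounds $0\le V_\infty(x)-V_d(x)\le q\,E_x[I_{\{\tau_d<\infty\}}]$ and invokes dominated convergence; you instead use $\tau_d\uparrow+\infty$ a.s.\ (non-explosion plus path continuity) and monotone convergence for each fixed $\tau$ before optimizing. Your version is the safer one: $P_x(\tau_d<\infty)$ need not vanish as $d\to+\infty$ when $X$ is transient toward $+\infty$ (e.g.\ geometric Brownian motion with $\sigma^2/2<\mu<r$, which satisfies the standing assumptions and has $P_x(\tau_d<\infty)=1$ for every $d$), whereas $\tau_d\uparrow+\infty$ a.s.\ always holds here; a correct repair of the paper's bound would use $e^{-r\tau}\le e^{-r\tau_d}$ on $\{\tau_d\le\tau\}$ to get $q\,E_x[e^{-r\tau_d}]=q\,\psi(x)/\psi(d)\to 0$, but your route avoids the issue entirely. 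For the critical levels, the paper obtains existence of $\lim_d x_0^d$ from the monotonicity of $d\mapsto x_0^d$, a forward reference to Proposition \ref{external comparison principles for pricing barrier option with American feature}, and then passes to the limit in \eqref{x_0 equation} using boundedness of $(h'\psi-h\psi')$ on $\bigl\{x: F(x)\in[y_h,y_H]\bigr\}$; you instead squeeze directly: Lemma \ref{uniqueness z_0} places $F(x_0^d)$ in $[y_h,y_H]$, where $H$ does not depend on $d$ (since $h$ vanishes beyond $q<d$) and $H'$ is continuous and strictly decreasing with $H'(y_h)=0$, so the forced limit $H'(F(x_0^d))\to 0$ pins $F(x_0^d)\to y_h$ and identifies $x_0=F^{-1}(y_h)$ with no appeal to the comparison principle. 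You also spell out the passage to the limit in formula \eqref{V(x)}, which the paper leaves implicit. Two small points to tighten: for the branch $0<x\le x_0$, note that $x_0^d\ge x_0$ for \emph{every} $d$ (immediate from $F(x_0^d)\ge y_h$), which covers the endpoint $x=x_0$; and the optimality of $\tau^\star$ for the infinite-horizon problem deserves one explicit line --- e.g.\ $\lim_{x\to+\infty}h(x)/\psi(x)=0$ so \cite[Proposition 5.7]{DK03} applies, with $\{V=h\}=(0,x_0]$ following from $V\ge V_d>h$ on $(x_0^d,d)$ --- though the paper's own proof is equally terse on this point.
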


\begin{proof}
Let $V_d(x):= \sup_{\tau \geq 0} E_x[e^{-r\tau} (q-X_{\tau})^+I_{\lbrace \tau<\tau_d \rbrace }]$ and $V_{\infty}(x):= \sup_{\tau \geq 0} E_x[e^{-r\tau} (q-X_{\tau})^+I_{\lbrace \tau<\infty \rbrace }]$. Note that 
\begin{align*}
0\le V_\infty(x)-V_d(x)\le \sup_{\tau\ge 0}E_x[e^{-r\tau}(q-X_{\tau})^+I_{\lbrace \tau_d\le  \tau<\infty \rbrace }]\le q E_x[I_{\lbrace \tau_d<\infty \rbrace }],
\end{align*}
where the last term goes to zero as $d\to +\infty$ by dominated convergence theorem. Consequently, we have $\lim\limits_{d\to+\infty}V_d(x)=V_\infty(x)$. 

To finish the proof, it suffices to show $x_0\triangleq\lim\limits_{d\to+\infty} x_0^d$ exists and satisfies \eqref{x_0 simple equation}, where $x_0^d$ is the unique solution of $$\dfrac{(h'\psi-h\psi')(x)}{F(d)}=(h'\varphi-h\varphi')(x).$$ Note that $x_0^d$  decreases as $d$ increases, by Proposition \ref{external comparison principles for pricing barrier option with American feature}. Then $x_0$ exists, and it satisfies \eqref{x_0 simple equation}, noting that $ \sup\limits_{x:F(x)\in [y_h,y_H] } |(h'\psi-h\psi')(x)|<+\infty$ and $\lim\limits_{d\to+\infty}F(d)=+\infty$.   The uniqueness of the solution to \eqref{x_0 simple equation} follows from the fact that  $(h'\varphi-h\varphi')(x)$ is strictly decreasing. 
\end{proof}

The following proposition provides the regularity property of the value function.

\begin{proposition}
The value function $V$ in \eqref{V(x)} belongs to $C^1\left( (0, d) \right) \cap C^2 \left( (0, d) \setminus \{ x_0 \} \right)$,  where $x_0$ is given by \eqref{x_0 equation}. In particular, the smooth-fit principle holds for the value function.
\end{proposition}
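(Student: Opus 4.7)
My plan is to split the proof into verifying $C^2$ regularity on $(0,d)\setminus\{x_0\}$ (which is essentially routine) and establishing $C^1$ continuity at $x_0$ (which is the smooth-fit statement and is the only nontrivial point; it will be driven by the defining equation \eqref{x_0 equation}).

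For the $C^2$ regularity, on the stopping region $(0, x_0)$ the formula $V(x) = q-x$ is analytic. On the continuation region $(x_0, d)$, I would use the identity $\varphi(x)F(x) = \psi(x)$ to rewrite
\begin{equation*}
V(x) = \frac{q-x_0}{\varphi(x_0)\bigl(F(d)-F(x_0)\bigr)}\bigl(F(d)\varphi(x)-\psi(x)\bigr),
\end{equation*}
which is a linear combination of the two $C^2$ fundamental solutions $\varphi$ and $\psi$ of $(\mathcal{L} - r)u = 0$, hence itself $C^2$ on $(x_0,d)$.

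The main step is the $C^1$ matching at $x_0$. I see two natural routes. The conceptual one leverages the representation $V(x) = \varphi(x)\cdot W(F(x))$ from Theorem \ref{value function for barrier put option with American feature}: by the construction in \eqref{W1}, the majorant $W$ equals $H$ on $(0,z_0]$ and the tangent line $L_{z_0}$ on $(z_0, F(d))$; the tangency $L_{z_0}'(z_0) = H'(z_0^-)$ makes $W$ of class $C^1$ at $z_0$, so, since $\varphi$ and $F$ are $C^1$ with $F'>0$, $V$ inherits $C^1$ regularity at $x_0 = F^{-1}(z_0)$. The concrete alternative is a direct computation: $V'(x_0^-) = -1$, while differentiating the formula on $(x_0, d)$ gives
\begin{equation*}
V'(x_0^+) = \frac{(q-x_0)\bigl(F(d)\varphi'(x_0)-\psi'(x_0)\bigr)}{\varphi(x_0)\bigl(F(d)-F(x_0)\bigr)},
\end{equation*}
and rearranging \eqref{x_0 equation} with $h(x)=q-x$ (valid in a neighborhood of $x_0$, since $x_0<q$) is exactly the identity needed to conclude $V'(x_0^+)=-1$. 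Either way, the smooth-fit principle $V'(x_0^-)=V'(x_0^+)$ follows.

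I do not anticipate any serious obstacle: all ingredients (the tangency baked into the construction of $W$, the explicit form of $V$, and the defining equation \eqref{x_0 equation}) are already produced by Lemmas \ref{second-order prop-H}--\ref{uniqueness z_0} and Theorem \ref{value function for barrier put option with American feature}. The only subtlety worth flagging is that one should \emph{not} expect $V''$ to agree at $x_0$ in general (compare \eqref{non twice differentiable} in the ESO setting), which is exactly why the claim excludes $x_0$ from the $C^2$ assertion.
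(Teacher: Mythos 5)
Your proposal is correct and your primary route is exactly the paper's: the paper proves this proposition by invoking the same argument as its Proposition on the smooth-fit principle for the ESO problem, namely that $V(x)=\varphi(x)W(F(x))$ inherits $C^1$ regularity at $x_0=F^{-1}(z_0)$ because the tangency built into the construction of the majorant $W$ in \eqref{W1} makes $W$ differentiable at $z_0$, with $C^2$ regularity away from $x_0$ coming from the explicit linear-combination formula. Your alternative direct computation verifying $V'(x_0^+)=-1$ from \eqref{x_0 equation} is a sound extra check (the identity $(q-x_0)\psi'(x_0)+\psi(x_0)=F(d)\left[(q-x_0)\varphi'(x_0)+\varphi(x_0)\right]$ is precisely a rearrangement of \eqref{x_0 equation} with $h(x)=q-x$), and your remark that $V''$ should not be expected to match at $x_0$ correctly mirrors \eqref{non twice differentiable}.
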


\begin{proof}
Without any further difficulty, the alleged results can be established by the same argument used in Proposition \ref{smooth-fit}.
\end{proof}

The following propositions are analogues of Proposition \ref{external comparison principles under ESO problem} and Proposition \ref{internal comparison principles under ESO problem}, respectively. 

\begin{proposition}\label{external comparison principles for pricing barrier option with American feature}
The optimal price level $x_0$ given by the equation (\ref{x_0 equation}) is decreasing with respect to $d$, while increasing with respect to $r$ and $q$. 
\end{proposition}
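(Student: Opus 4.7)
The plan is to mirror the strategy of the proof of Proposition \ref{external comparison principles under ESO problem}: in each case, translate the monotonicity of the critical level $x_0$ into an inclusion of stopping (equivalently, continuation) regions, and then verify that inclusion by a pointwise comparison of the value functions. Throughout, I write $V_d$, $V_r$, $V_q$ to emphasize the parameter being varied, and let $x_0^d$, $x_0^r$, $x_0^q$ denote the corresponding critical levels, with continuation region $(x_0, d)$ and stopping region $(0, x_0]$ by Theorem \ref{value function for barrier put option with American feature}.

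For the barrier $d$: if $d_1<d_2$, then since $X$ is regular, $\tau_{d_1}\le \tau_{d_2}$ almost surely, so $I_{\{\tau<\tau_{d_1}\}}\le I_{\{\tau<\tau_{d_2}\}}$ and $V_{d_1}(x)\le V_{d_2}(x)$ on the common domain $(0,d_1)$. Pick any $x\in (x_0^{d_1},d_1)$, i.e., in the continuation region of $V_{d_1}$. Then $h(x)<V_{d_1}(x)\le V_{d_2}(x)$, so $x$ lies in the continuation region of $V_{d_2}$, giving $x>x_0^{d_2}$. Hence $x_0^{d_2}\le x_0^{d_1}$. The monotonicity in $r$ is analogous and even simpler: $e^{-r\tau}$ is pointwise decreasing in $r$, so $V_{r_1}\ge V_{r_2}$ for $r_1<r_2$; applying the same argument with the roles of $V_{r_1}$ and $V_{r_2}$ swapped gives $x_0^{r_1}\le x_0^{r_2}$.

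The monotonicity in $q$ is the delicate part, because $V_q$ is \emph{increasing} in $q$ (same direction as the payoff), so the simple argument above fails: the continuation region does not automatically shrink. The remedy is a two-sided sandwich. From the elementary pointwise inequality
\begin{equation*}
(q_1-y)^+\ \le\ (q_2-y)^+\ \le\ (q_1-y)^+ + (q_2-q_1),\qquad q_1\le q_2,
\end{equation*}
applied inside the expectation and followed by taking $\sup_\tau$, one obtains
\begin{equation*}
V_{q_1}(x)\ \le\ V_{q_2}(x)\ \le\ V_{q_1}(x)+(q_2-q_1).
\end{equation*}
Now fix $x\in (0,x_0^{q_1}]$, so $V_{q_1}(x)=q_1-x$. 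The right inequality gives $V_{q_2}(x)\le q_2-x$, while the trivial lower bound from $\tau=0$ gives $V_{q_2}(x)\ge (q_2-x)^+=q_2-x$ (since $x<q_1<q_2$). Thus $V_{q_2}(x)=q_2-x=h_{q_2}(x)$, i.e., $x$ lies in the stopping region of $V_{q_2}$, so $x\le x_0^{q_2}$. Therefore $x_0^{q_1}\le x_0^{q_2}$.

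The hard part will be the $q$-case: one cannot conclude anything just from $V_{q_2}\ge V_{q_1}$, and the additional uniform upper bound $V_{q_2}\le V_{q_1}+(q_2-q_1)$ is essential — it is precisely the analogue of the $K$-argument in the proof of Proposition \ref{external comparison principles under ESO problem}, and allows one to read off that $V_{q_2}-h_{q_2}\le V_{q_1}-h_{q_1}$, flipping the direction of monotonicity from the value function to the ``excess over payoff'' that characterizes continuation.
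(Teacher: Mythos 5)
Your proof is correct and takes essentially the same approach as the paper: monotonicity of $V$ in $d$ and $r$ yields the continuation-region inclusions, and the $q$-case rests on the same Lipschitz bound $V_{q_2}\le V_{q_1}+(q_2-q_1)$. The only cosmetic difference is that you run the $q$-argument directly, showing each $x\le x_0^{q_1}$ remains in the stopping region for $q_2$ (using $\tau=0$ for the lower bound), whereas the paper derives a contradiction on an intermediate interval $(x_0^{q_2},x_0^{q_1})$.
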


\begin{proof}
Denote the value function $V(x)$ in (\ref{vf}) by $V_d(x)$ to emphasize its dependence on $d$, and we shall use similar representations for the other parameters. By \eqref{vf}, it is easy to see
\begin{equation*}
V_{r_1}(x)>V_{r_2}(x) \ \mbox{for} \ r_1<r_2; \ \ V_{q_1}(x)<V_{q_2}(x) \ \mbox{ for} \ q_1<q_2; \ \ V_{d_1}(x)<V_{d_2}(x) \ \mbox{ for} \ d_1<d_2.
\end{equation*}
We denote by $C_r$, $C_d$ and $C_q$, the continuation regions for $V_{r}(x)$, $V_{d}(x)$ and $V_{q}(x)$, respectively. Then it follows from the above inequalities that 
\begin{equation} \label{C_r}
C_{r_2}=\lbrace x: V_{r_2}(x)>h(x) \rbrace \subset \lbrace x: V_{r_1}(x)>h(x) \rbrace = C_{r_1},
\end{equation}
\begin{equation}\label{C_d}
C_{d_1}=\lbrace x: V_{d_1}(x)>h(x) \rbrace \subset \lbrace x: V_{d_2}(x)>h(x) \rbrace = C_{d_2}.
\end{equation}
By Theorem \ref{value function for barrier put option with American feature}, (\ref{C_r}) and (\ref{C_d}), we have $x_0^{r_1}\le x_0^{r_2}$ and $x_0^{d_2}\le x_0^{d_1}$.

Finally, we show $x_0^{q_1}\leq x_0^{q_2}$ for $q_1<q_2$. Assuming, on the contrary, that $x_0^{q_1}> x_0^{q_2}$, and taking some $x \in (x_0^{q_2}, x_0^{q_1})$, we have
\begin{equation*}
V_{q_2}(x)> q_2-x \ \ \mbox{and} \ \ V_{q_1}(x)=q_1-x,
\end{equation*}
since such $x \in C_{q_2}$ but $x \notin C_{q_1}$. As a result, we derive that $V_{q_2}(x)-V_{q_1}(x)>q_2-q_1$. However, this is in contradiction with the following fact,
\begin{align*}
V_{q_2}(x)&=\sup_{\tau \geq 0} E_x[e^{-r\tau} (q_2-X_{\tau})^+I_{\lbrace \tau<\tau_d \rbrace }]\\
& \leq \sup_{\tau \geq 0} E_x[e^{-r\tau} \left((q_1-X_{\tau})^++(q_2-q_1)\right)I_{\lbrace \tau<\tau_d \rbrace }]\\
&\leq \sup_{\tau \geq 0} E_x[e^{-r\tau} (q_1-X_{\tau})^+I_{\lbrace \tau<\tau_d \rbrace }] +q_2-q_1\\
&= V_{q_1}(x)+q_2-q_1.
\end{align*}
\end{proof}
%

\begin{proposition}\label{internal comparison principles for pricing barrier option with American feature}
The value function $V(x)$ given in \eqref{V(x)} (and \eqref{apo}) is non-increasing  with respect to the drift  $\mu$. That is, $V_{\mu_1}(x)\geq V_{\mu_2}(x)$, if $\mu_1(x) \leq \mu_2(x)$ for all $x\in (0,d)$. The optimal price level $x_0$ is non-decreasing with respect to $\mu$. 
\end{proposition}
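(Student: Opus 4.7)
The plan is to mirror the Dynkin-formula argument used for the $\sigma$-monotonicity in Proposition \ref{internal comparison principles under ESO problem}. Let $\mu_1(x)\leq \mu_2(x)$; I aim to show $V_{\mu_1}\geq V_{\mu_2}$ on $(0,d)$. On the stopping region $\Gamma_{\mu_2}=(0,x_0^{\mu_2}]$ the inequality is automatic, since $V_{\mu_2}=h$ there and $V_{\mu_1}\geq h$ by definition, so all the content lies on the continuation region $C_{\mu_2}=(x_0^{\mu_2},d)$. As a preliminary, I would record that $V_{\mu_1}(\cdot)$ is non-increasing on $(0,d)$: this is transparent from the closed-form expression \eqref{V(x)}, since $\varphi$ and $F(d)-F(\cdot)$ are both strictly positive and strictly decreasing. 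In particular $V_{\mu_1}'\leq 0$ wherever it exists, and only this first-order information on $V_{\mu_1}$ will be needed, which makes the $\mu$-argument simpler than its $\sigma$-counterpart.

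For $x\in C_{\mu_2}$, I would apply Dynkin's formula to $V_{\mu_1}$ along the process $X^{\mu_2}$ stopped at $\tau := \tau_{x_0^{\mu_2}}^{\mu_2}\wedge \tau_d^{\mu_2}$:
\[
E_x\!\left[e^{-r\tau}V_{\mu_1}\!\left(X^{\mu_2}_\tau\right)\right]
= V_{\mu_1}(x) + E_x\!\int_0^{\tau} e^{-rt}\bigl(\mathcal{A}_{\mu_2}V_{\mu_1}\bigr)\!\left(X^{\mu_2}_t\right)dt.
\]
Decomposing $\mathcal{A}_{\mu_2}V_{\mu_1}=\mathcal{A}_{\mu_1}V_{\mu_1}+(\mu_2-\mu_1)V_{\mu_1}'$, the first summand is $\leq 0$ by the variational inequality of Remark \ref{fb}, and the second is $\leq 0$ since $\mu_2\geq \mu_1$ and $V_{\mu_1}'\leq 0$ by the preliminary step. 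Hence the integral is non-positive and $V_{\mu_1}(x)\geq E_x[e^{-r\tau}V_{\mu_1}(X^{\mu_2}_\tau)]$. At the exit points, $V_{\mu_1}(x_0^{\mu_2})\geq h(x_0^{\mu_2})=q-x_0^{\mu_2}$ and $V_{\mu_1}(d^-)=0$ by continuity; comparing with the probabilistic representation $V_{\mu_2}(x)=(q-x_0^{\mu_2})E_x[e^{-r\tau_{x_0^{\mu_2}}^{\mu_2}}I_{\{\tau_{x_0^{\mu_2}}^{\mu_2}<\tau_d^{\mu_2}\}}]$, which follows from the optimal stopping time in Theorem \ref{value function for barrier put option with American feature}, I obtain $V_{\mu_1}(x)\geq V_{\mu_2}(x)$ on $C_{\mu_2}$.

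Together with the trivial inequality on $\Gamma_{\mu_2}$, this yields $V_{\mu_1}\geq V_{\mu_2}$ on all of $(0,d)$. Consequently $C_{\mu_2}=\{V_{\mu_2}>h\}\subset \{V_{\mu_1}>h\}=C_{\mu_1}$, and since $C_\mu=(x_0^\mu,d)$ by Theorem \ref{value function for barrier put option with American feature}, I conclude $x_0^{\mu_1}\leq x_0^{\mu_2}$, i.e., $x_0$ is non-decreasing in $\mu$. The same argument, dropping the $\tau_d$ component, covers the standard American put case \eqref{apo}; alternatively one can pass to the limit $d\to+\infty$ as in Corollary \ref{value function for standard American put option}. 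I expect the only non-routine step to be the application of Dynkin's formula to $V_{\mu_1}$, which is only $C^1$ with a second-derivative jump at $x_0^{\mu_1}$; this is handled exactly as in the proof of Proposition \ref{internal comparison principles under ESO problem}, either through the It\^o--Tanaka formula or through a smooth approximation, and is notably simpler than in the $\sigma$-case precisely because the $\mu$-perturbation involves only the first derivative, so no second-order regularity or convexity of $V_{\mu_1}$ is required.
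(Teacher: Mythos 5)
Your proposal is correct, and it shares the paper's opening move---Dynkin's formula applied to $V_{\mu_1}$ along $X^{\mu_2}$, with the decomposition $\mathcal{A}_{\mu_2}V_{\mu_1}=\mathcal{A}_{\mu_1}V_{\mu_1}+(\mu_2-\mu_1)V_{\mu_1}'$ and the two sign facts $\mathcal{A}_{\mu_1}V_{\mu_1}\le 0$ and $V_{\mu_1}'\le 0$---but your second half is genuinely different from the paper's. The paper truncates \emph{one-sided} passage times $\tau_n^{\mu_2}=n\wedge\tau_y^{\mu_2}$, lets $n\to+\infty$ to get $\varphi_{\mu_2}(x)/\varphi_{\mu_2}(y)\le V_{\mu_1}(x)/V_{\mu_1}(y)$, separately proves $V_{\mu_2}(x)/V_{\mu_2}(y)\le \varphi_{\mu_2}(x)/\varphi_{\mu_2}(y)$ from the coefficient signs $c_1^{\mu_2}>0$, $c_2^{\mu_2}<0$ in $V_{\mu_2}=c_1^{\mu_2}\varphi_{\mu_2}+c_2^{\mu_2}\psi_{\mu_2}$, and then uses the sandwich \eqref{FHT} to conclude that the ratio $V_{\mu_1}/V_{\mu_2}$ is increasing on $C_{\mu_2}$, anchoring it at $y\downarrow x_0^{\mu_2}$ where it is $\ge 1$. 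You instead stop at the \emph{two-sided} exit of $C_{\mu_2}=(x_0^{\mu_2},d)$ and compare additively, using the boundary values $V_{\mu_1}(x_0^{\mu_2})\ge q-x_0^{\mu_2}$ and $V_{\mu_1}(d^-)=0$ against the representation $V_{\mu_2}(x)=(q-x_0^{\mu_2})\,E_x\bigl[e^{-r\tau_{x_0^{\mu_2}}^{\mu_2}};\,\tau_{x_0^{\mu_2}}^{\mu_2}<\tau_d^{\mu_2}\bigr]$; in effect you transplant the scheme of Proposition \ref{internal comparison principles under ESO problem} rather than follow the ratio argument the paper actually uses here. Your route is shorter---it bypasses both the coefficient-sign analysis and the ratio-monotonicity step---at the mild cost of needing $E_x[\tau]<+\infty$ for the two-sided exit (true, by the same \cite[Lemma 46.1]{DW1987} argument the paper invokes in Proposition \ref{internal comparison principles under ESO problem}) and of handling \eqref{apo} separately (your $d\to+\infty$ limit, or a one-sided variant, both work), whereas the paper's one-sided scheme covers both cases verbatim. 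One reference should be repaired: Remark \ref{fb} concerns the ESO problem of Section 3, so for the barrier put you should verify $\mathcal{A}_{\mu_1}V_{\mu_1}\le 0$ directly from \eqref{V(x)}---it vanishes on $C_{\mu_1}$, where $V_{\mu_1}$ is a linear combination of $\psi_{\mu_1}$ and $\varphi_{\mu_1}$, and on $(0,x_0^{\mu_1}]$ it equals $\theta(x)-rq\le 0$ because $x_0^{\mu_1}\le x_\theta$ (from $z_0\le y_H\le y_\theta$ in the proof of Theorem \ref{value function for barrier put option with American feature}, with Lemma \ref{second-order prop-H} covering the case $\mu(q^-)\ge 0$)---which is exactly how the paper justifies the same inequality. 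It is worth noting that this bound holds on all of $(0,d)$, not merely on $C_{\mu_1}$, which is what makes your Dynkin step legitimate before you know how $x_0^{\mu_1}$ and $x_0^{\mu_2}$ compare; and your It\^o--Tanaka remark for the kink at $x_0^{\mu_1}$ is sound given the $C^1$ (smooth-fit) regularity of $V$ established in Section 4.
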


\begin{proof} We shall prove the result for \eqref{V(x)}, and the proof for \eqref{apo} can be done in a similar way. The proof is similar to the proof of Proposition \ref{internal comparison principles under ESO problem}.

Denote by $\Gamma_\mu \ (\mbox{resp.} \ C_\mu)$ the stopping region (resp. continuation region) of $V_\mu$. Note that $V_{\mu_2}(x)\leq V_{\mu_1}(x)$ for $x\in \Gamma_{\mu_2}$ since immediate stop policy is the smallest possible optimal candidate. Thus we only need to show $V_{\mu_2}(x)\leq V_{\mu_1}(x)$ for  $x\in C_{\mu_2}$. 
Denote by $\mathcal{A}_{\mu}$ the differential operator
$\mathcal{A}_{\mu} \triangleq \mathcal{L}_{\mu} -r$ where $\mathcal{L}_{\mu}$ is the infinitesimal generator of the diffusion $X^\mu$. Along the lines of the notation, we denote by $(\psi_{\mu},\varphi_{\mu})$, the pair of fundamental solutions of $(\mathcal{A}_{\mu}u)(x)=0$. As usual, $\psi_{\mu}(x)$ is strictly increasing whilst $\varphi_{\mu}(x)$ is strictly decreasing. 

We define $\tau_n^{\mu_i}\triangleq n\wedge \tau_y^{\mu_i}$ with $\tau_y^{\mu_i}\triangleq \inf \lbrace t\geq 0: X_t^{\mu_i}=y \rbrace$. Since $\lbrace \tau_n^{\mu_i} \rbrace_{n\geq 1}$ is a sequence of almost surely finite stopping times, a direct use of Dynkin's formula implies  that, for all $x\geq y$, 
\begin{align*}
E_x\left[ e^{-r\tau_n^{\mu_2}} V_{\mu_1}( X_{\tau_n^{\mu_2}}^{\mu_2}) \right] \ &= \ V_{\mu_1}(x)+ E_x {\int}_0^{\tau_n^{\mu_2}}e^{-rt}\left(\mathcal{A}_{\mu_2}V_{\mu_1}\right)\left(X_t^{\mu_2}\right)dt  \\
\ &\leq \ V_{\mu_1}(x)+ E_x {\int}_0^{\tau_n^{\mu_2}}e^{-rt}\left(\left(\mathcal{A}_{\mu_2}-\mathcal{A}_{\mu_1}\right)V_{\mu_1}\right)\left(X_t^{\mu_2}\right)dt \\ 
\ &= \ V_{\mu_1}(x)+ E_x {\int}_0^{\tau_n^{\mu_2}}e^{-rt}\left(\left(\mu_2-\mu_1\right)V'_{\mu_1}\left(X_t^{\mu_2}\right)\right)dt 
\\
\ &\leq \ V_{\mu_1}(x)
\end{align*}
since, by \eqref{V(x)}, $(\mathcal{A}_{\mu_1}V_{\mu_1})(x) \leq 0$ and $V'_{\mu_1}(x)< 0$ for all $x\in (0,d)$. As a consequence, 
\begin{align*}
V_{\mu_1}(x) \ &\geq \ E_x\left[ e^{-r\tau_y^{\mu_2}} V_{\mu_1}( X_{\tau_y^{\mu_2}}^{\mu_2}); \tau_y^{\mu_2} < n \right] + E_x\left[ e^{-rn} V_{\mu_1}( X_n^{\mu_2}); \tau_y^{\mu_2} \geq n \right] \\
&\geq \ E_x\left[ e^{-r\tau_y^{\mu_2}} V_{\mu_1}( X_{\tau_y^{\mu_2}}^{\mu_2}); \tau_y^{\mu_2} < n \right] \\
&= \ V_{\mu_1}(y)E_x\left[ e^{-r\tau_y^{\mu_2}}; \tau_y^{\mu_2} < n \right] 
\end{align*}
Letting $n \rightarrow +\infty$, by \eqref{eq-st}, for all $x\geq y$,  
\begin{align*}
\dfrac{\varphi_{\mu_2}(x)}{\varphi_{\mu_2}(y)}=E_x\left[ e^{-r\tau_y^{\mu_2}}; \tau_y^{\mu_2} < +\infty \right]  \leq \dfrac{V_{\mu_1}(x)}{V_{\mu_1}(y)}
\end{align*}
Note that on the continuation region $C_{\mu_i}$, by \eqref{V(x)}, $V_{\mu_i}(\cdot)$ can be expressed as 
\begin{equation*}
V_{\mu_i}(\cdot) \ = \ c_1^{\mu_i}\varphi_{\mu_i}(\cdot) + c_2^{\mu_i}\psi_{\mu_i}(\cdot)
\end{equation*}
with constants $c_1^{\mu_i}>0$ and $c_2^{\mu_i}<0$. It is then easy to check that for $x\geq y \in C_{\mu_i}$,
\begin{align*}
\dfrac{V_{\mu_i}(x)}{V_{\mu_i}(y)} \ = \ \dfrac{c_1^{\mu_i}\varphi_{\mu_i}(x)+ c_2^{\mu_i}\psi_{\mu_i}(x)}{c_1^{\mu_i}\varphi_{\mu_i}(y)+ c_2^{\mu_i}\psi_{\mu_i}(y)} \ \leq \ \dfrac{\varphi_{\mu_i}(x)}{\varphi_{\mu_i}(y)}
\end{align*}
due to $\varphi_{\mu_i}(x)\psi_{\mu_i}(y) \leq \varphi_{\mu_i}(y)\psi_{\mu_i}(x)$ for $x\geq y$. Consequently, we see that for all $x\geq y \in C_{\mu_2}$,
\begin{align}\label{FHT}
\dfrac{V_{\mu_2}(x)}{V_{\mu_2}(y)} \ \leq \ \dfrac{\varphi_{\mu_2}(x)}{\varphi_{\mu_2}(y)} \ \leq \ \dfrac{V_{\mu_1}(x)}{V_{\mu_1}(y)} 
\end{align}
Adjusting the terms, we obtain that $\dfrac{V_{\mu_1}(y)}{V_{\mu_2}(y)}\leq \dfrac{V_{\mu_1}(x)}{V_{\mu_2}(x)}$ for all $x\geq y \in C_{\mu_2}$. That is, $\dfrac{V_{\mu_1}}{V_{\mu_2}}(\cdot)$ is increasing on $C_{\mu_2}$. Therefore, we can derive by taking $y \rightarrow x_0^{\mu_2}$ that 
\begin{align*}
\dfrac{V_{\mu_1}(x)}{V_{\mu_2}(x)} \  \geq \ \dfrac{V_{\mu_1}(x_0^{\mu_2})}{V_{\mu_2}(x_0^{\mu_2})} \ \geq \ 1
\end{align*}
for all $x \in C_{\mu_2}$.
As a consequence, we obtain that $V_{\mu_2}(x)\leq V_{\mu_1}(x)$ on $x \in C_{\mu_2}$. Therefore,
\begin{equation} \label{C_mu}
C_{\mu_2}=\lbrace x: V_{\mu_2}(x)>h(x) \rbrace \subset \lbrace x: V_{\mu_1}(x)>h(x) \rbrace = C_{\mu_1},
\end{equation}
and this implies $x_0^{\mu_1}\le x_0^{\mu_2}$.  \end{proof}

\begin{remark}
The financial implication of (\ref{C_mu}) is quite clear. That is, increasing the value of $\mu$ (drift factor) decreases the value function and shrinks the continuation region where wait is optimal, and thus accelerates rational exercise.  In terms of the dividend yield, the above proposition indicates that a higher dividend yield leads to a higher option premium of an American barrier put option.
\end{remark}

For a standard American put option (without a barrier), the following proposition indicates the dependence of the value function and the optimal exercise boundary on the volatility $\sigma$. 

\begin{proposition}\label{internal comparison principles for pricing standard American put option} 
The value function $V(x)$ given in \eqref{apo} is non-decreasing  with respect to the volatility  $\sigma$. That is, if  $\sigma_1(x)\leq \sigma_2(x)$ for all $x\in (0,+\infty)$, then $V_{\sigma_1}(x)\leq V_{\sigma_2}(x)$ for all $x\in (0,+\infty)$.  Consequently, the optimal price level $x_0$ is non-increasing with respect to $\sigma$. 
\end{proposition}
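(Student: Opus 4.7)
The plan is to parallel the volatility-comparison portion of the proof of Proposition~\ref{internal comparison principles under ESO problem}, adapting it to the new feature that the continuation region $C_{\sigma_1}=(x_0^{\sigma_1},+\infty)$ is now unbounded. On the stopping region $(0,x_0^{\sigma_1}]$ the inequality $V_{\sigma_1}(x)=h(x)\leq V_{\sigma_2}(x)$ is immediate, so I would focus on proving $V_{\sigma_2}\geq V_{\sigma_1}$ on $C_{\sigma_1}$. The crucial analytic input to be recorded first is the global convexity of $V_{\sigma_2}$: from the closed-form expression \eqref{apo}, $V_{\sigma_2}$ is affine on $(0,x_0^{\sigma_2}]$ and a positive multiple of the convex function $\varphi_{\sigma_2}$ on $[x_0^{\sigma_2},+\infty)$, so the smooth-fit condition assembles these pieces into a $C^1$, convex function satisfying $V_{\sigma_2}''\geq 0$ on $(0,+\infty)\setminus\{x_0^{\sigma_2}\}$.

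Next, for arbitrary $x>y>x_0^{\sigma_1}$ and $n\in\mathbb{N}$, set $\tau_n\triangleq n\wedge\tau_y^{\sigma_1}$ with $\tau_y^{\sigma_1}\triangleq\inf\{t\geq 0:X_t^{\sigma_1}=y\}$. Dynkin's formula applied to $V_{\sigma_2}$ under $X^{\sigma_1}$, together with the decomposition $\mathcal{A}_{\sigma_1}V_{\sigma_2}=\mathcal{A}_{\sigma_2}V_{\sigma_2}+\tfrac{1}{2}(\sigma_1^2-\sigma_2^2)V_{\sigma_2}''$, the variational inequality $(\mathcal{A}_{\sigma_2}V_{\sigma_2})\leq 0$ (the analogue of Remark~\ref{fb} for the standard American put), $\sigma_1^2\leq \sigma_2^2$, and $V_{\sigma_2}''\geq 0$, yields $V_{\sigma_2}(x)\geq E_x[e^{-r\tau_n}V_{\sigma_2}(X_{\tau_n}^{\sigma_1})]$. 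Since $0\leq V_{\sigma_2}\leq q$, the contribution on $\{\tau_y^{\sigma_1}\geq n\}$ is dominated by $qe^{-rn}\to 0$, while on $\{\tau_y^{\sigma_1}<\infty\}$ dominated convergence produces the limit $e^{-r\tau_y^{\sigma_1}}V_{\sigma_2}(y)$. Sending $n\to\infty$ and then $y\downarrow x_0^{\sigma_1}$, using the continuity of $V_{\sigma_2}$, the bound $V_{\sigma_2}(x_0^{\sigma_1})\geq h(x_0^{\sigma_1})=q-x_0^{\sigma_1}$, and the hitting-time identity \eqref{eq-st}, I would arrive at
\begin{equation*}
V_{\sigma_2}(x)\geq (q-x_0^{\sigma_1})\cdot\frac{\varphi_{\sigma_1}(x)}{\varphi_{\sigma_1}(x_0^{\sigma_1})}=V_{\sigma_1}(x)
\end{equation*}
for all $x\in C_{\sigma_1}$, by \eqref{apo}. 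The consequence $x_0^{\sigma_2}\leq x_0^{\sigma_1}$ then follows from the inclusion $C_{\sigma_1}=\{V_{\sigma_1}>h\}\subset\{V_{\sigma_2}>h\}=C_{\sigma_2}$, exactly as in the ESO case.

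The main obstacle is the unboundedness of the continuation region: the bounded-interval Dynkin argument in Section~3 no longer applies directly, since $\tau_y^{\sigma_1}$ may be infinite. It is precisely here that the global bound $V_{\sigma_2}\leq q$, inherited from the payoff $h\leq q$, combined with the exponential discount factor, provides the control needed to justify the limit $n\to\infty$ on the event $\{\tau_y^{\sigma_1}=\infty\}$. A secondary technicality is that Dynkin's formula is being applied to a function that is only $C^1$ at $x_0^{\sigma_2}$; this is standard given the convexity of $V_{\sigma_2}$ (e.g. via the It\^o--Tanaka--Meyer formula), and I would handle it in the same implicit fashion as Proposition~\ref{internal comparison principles under ESO problem}.
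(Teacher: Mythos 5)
Your argument is correct, but it takes a genuinely different route from the paper's. The paper never applies Dynkin's formula to $V_{\sigma_2}$: it applies it to the fundamental solution $\varphi_{\sigma_2}$ itself, for which $\mathcal{A}_{\sigma_2}\varphi_{\sigma_2}\equiv 0$ holds exactly and $\varphi''_{\sigma_2}>0$ by the standing convexity assumption of Section 4; with the same truncation $\tau_n=n\wedge\tau_y^{\sigma_1}$ and \eqref{eq-st} this yields the ratio comparison $\varphi_{\sigma_1}(x)/\varphi_{\sigma_1}(y)\le\varphi_{\sigma_2}(x)/\varphi_{\sigma_2}(y)$ for $x\ge y$, and the conclusion follows from the sandwich $V_{\sigma_1}(x)=h(x_0^{\sigma_1})\varphi_{\sigma_1}(x)/\varphi_{\sigma_1}(x_0^{\sigma_1})\le h(x_0^{\sigma_1})\varphi_{\sigma_2}(x)/\varphi_{\sigma_2}(x_0^{\sigma_1})\le V_{\sigma_2}(x)$, the last step coming from the suboptimal strategy of stopping $X^{\sigma_2}$ at the level $x_0^{\sigma_1}$. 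Working with the smooth, exactly $r$-harmonic $\varphi_{\sigma_2}$ lets the paper bypass all three extra obligations your transplant of Proposition \ref{internal comparison principles under ESO problem} incurs: the global convexity of $V_{\sigma_2}$, the It\^o--Tanaka discussion at the $C^1$ kink $x_0^{\sigma_2}$, and the variational inequality $(\mathcal{A}_{\sigma_2}V_{\sigma_2})\le 0$. That last item is the one place where you lean on an unproved ``analogue of Remark \ref{fb}'': on the stopping region the inequality reads $\theta(x)-rq\le 0$ for $x\in(0,x_0^{\sigma_2})$, which is true but only because $x_0^{\sigma_2}\le x_\theta$ (equivalently $z_0\le y_H\le y_\theta$, a fact recorded in the proof of Theorem \ref{value function for barrier put option with American feature} via Lemma \ref{second-order prop-H}), so you should cite this rather than assert the analogue; with that reference supplied your proof is complete, and it buys uniformity, since the identical template then covers both the ESO and the put comparisons. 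Two smaller remarks: the ``main obstacle'' of the unbounded continuation region is already resolved in the paper by exactly your device $\tau_n=n\wedge\tau_y$ in Propositions \ref{internal comparison principles for pricing barrier option with American feature} and \ref{internal comparison principles for pricing standard American put option} (and since $V_{\sigma_2}\ge 0$ you may simply drop the term on $\{\tau_y^{\sigma_1}\ge n\}$ and use monotone convergence, so the bound $V_{\sigma_2}\le q$ is convenient but not essential); and the final limit $y\downarrow x_0^{\sigma_1}$ is unnecessary, as taking $y=x_0^{\sigma_1}$ directly together with $V_{\sigma_2}(x_0^{\sigma_1})\ge h(x_0^{\sigma_1})$ already suffices.
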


\begin{proof} We shall follow the idea used in the proofs of Propositions \ref{internal comparison principles under ESO problem} and \ref{internal comparison principles for pricing barrier option with American feature}.

Denote by $\Gamma_\sigma \ (\mbox{resp.} \ C_\sigma)$ the stopping region (resp. the continuation region) of $V_\sigma$. Note that $V_{\sigma_1}(x)\leq V_{\sigma_2}(x)$ on $\Gamma_{\sigma_1}$ since immediate exercise yields the least value. Thus we only need to show $V_{\sigma_1}(x)\leq V_{\sigma_2}(x)$ on $C_{\sigma_1}$.

Denote by $\mathcal{A}_{\sigma}$ the differential operator
$\mathcal{A}_{\sigma} \triangleq \mathcal{L}_{\sigma} -r$,
where $\mathcal{L}_{\sigma}$ is the infinitesimal generator of the diffusion $X^\sigma$. Similar as in the proof of Proposition \ref{internal comparison principles for pricing barrier option with American feature}, we denote by $(\psi_{\sigma},\varphi_{\sigma})$, the pair of fundamental solutions of $(\mathcal{A}_{\sigma}u)(x)=0$, where $\psi_{\sigma}(x)$ is strictly increasing whilst $\varphi_{\sigma}(x)$ is strictly decreasing. 

Define $\tau_n^{\sigma_i}:= n\wedge \tau_y^{\sigma_i}$ where $\tau_y^{\sigma_i}:= \inf \lbrace t\geq 0: X_t^{\sigma_i}=y \rbrace$. By Dynkin's formula, for all $x\geq y$, 
\begin{align*}
E_x\left[ e^{-r\tau_n^{\sigma_1}} \varphi_{\sigma_2}( X_{\tau_n^{\sigma_1}}^{\sigma_1}) \right] \ &= \ \varphi_{\sigma_2}(x)+ E_x {\int}_0^{\tau_n^{\sigma_1}}e^{-rt}\left(\mathcal{A}_{\sigma_1} \varphi_{\sigma_2}\right)\left(X_t^{\sigma_1}\right)dt  \\
\ &= \ \varphi_{\sigma_2}(x)+  E_x {\int}_0^{\tau_n^{\sigma_1}}e^{-rt}\left(\left(\mathcal{A}_{\sigma_1}-\mathcal{A}_{\sigma_2}\right) \varphi_{\sigma_2}\right)\left(X_t^{\sigma_1}\right)dt \\ 
\ &= \ \varphi_{\sigma_2}(x)+  E_x {\int}_0^{\tau_n^{\sigma_1}}e^{-rt}\left(\dfrac{1}{2} \left(\sigma_1^2-\sigma_2^2\right) \varphi''_{\sigma_2}\left(X_t^{\sigma_1}\right)\right)dt 
\\
\ &\leq \ \varphi_{\sigma_2}(x)
\end{align*}
since $\mathcal{A}_{\sigma_2}\varphi_{\sigma_2}(x) \equiv 0$ and $\varphi''_{\sigma_2}(x)>0$ for all $x$. Letting $n$ go to infinity, by \eqref{eq-st}, we have for all $x\geq y$,
\begin{align*}
\dfrac{\varphi_{\sigma_1}(x)}{\varphi_{\sigma_1}(y)} \leq \dfrac{\varphi_{\sigma_2}(x)}{\varphi_{\sigma_2}(y)}.
\end{align*}
Hence, by \eqref{apo}, for $x \in C_{\sigma_1}$,
\begin{align}\label{eq-55}
V_{\sigma_1}(x) = \ h(x_0^{\sigma_1})\dfrac{\varphi_{\sigma_1}(x)}{\varphi_{\sigma_1}(x_0^{\sigma_1})} \ \leq \ h(x_0^{\sigma_1})\dfrac{\varphi_{\sigma_2}(x)}{\varphi_{\sigma_2}(x_0^{\sigma_1})}.
\end{align}
On the other hand, for $x \in C_{\sigma_1}$,
\begin{align}\label{eq-56}
V_{\sigma_2}(x) \ &\geq \ E_x\left[e^{-r\tau_{x_0^{\sigma_1}}^{\sigma_2}} h\left(X_{\tau_{x_0^{\sigma_1}}^{\sigma_2}}^{\sigma_2}\right)\right] \ = \ h(x_0^{\sigma_1})\dfrac{\varphi_{\sigma_2}(x)}{\varphi_{\sigma_2}(x_0^{\sigma_1})}.
\end{align}
Combine \eqref{eq-55} and \eqref{eq-56} and we have $V_{\sigma_1}(x)\leq V_{\sigma_2}(x)$ for $x \in C_{\sigma_1}$. Thus $V_{\sigma_1}(x)\leq V_{\sigma_2}(x)$ for $x \in C_{\sigma_1}$ for all $x\in (0,+\infty)$.

Finally, observing  
\begin{equation*} \label{C_sigma}
(x_0^{\sigma_1}, +\infty)=C_{\sigma_1}=\lbrace x: V_{\sigma_1}(x)>h(x) \rbrace \subset \lbrace x: V_{\sigma_2}(x)>h(x) \rbrace = C_{\sigma_2}=(x_0^{\sigma_2}, +\infty),
\end{equation*}
we have $x_0^{\sigma_2}\le x_0^{\sigma_1}$. The proof is concluded.
\end{proof}

\begin{tabular}{lll}
Dongchao Huang\\
Department of Mathematics
University of Hong Kong\\
Hong Kong\\
{\tt huangdongchao2012@gmail.com}
\end{tabular}
\\

\begin{tabular}{lll}
Jian Song \\
Department of Mathematics and Department of Statistics \& Actuarial Science\\
University of Hong Kong\\
Hong Kong\\
{\tt txjsong@hku.hk}
\end{tabular}

\end{document}